\documentclass[a4paper,11pt,openany]{amsart}
\usepackage[centertags]{amsmath}
\usepackage{amscd}
\usepackage{amsthm}
\usepackage{amssymb}
\usepackage{array}
\usepackage{enumerate}
\usepackage{hyperref}
\usepackage{cleveref}
\usepackage{multicol}
\usepackage{centernot}
\usepackage{mathtools}
\usepackage{stmaryrd}
\usepackage{tikzsymbols}
\usepackage{textcomp}
\usepackage{parskip}
\usepackage{mathrsfs}
\usepackage{tikz}
\usetikzlibrary{cd}
\usepackage{tikz-cd}
\usetikzlibrary{matrix}

\usepackage[utf8]{inputenc}
\usepackage[T1]{fontenc}

\usepackage[english]{babel}
\usepackage{color}
\usepackage{tikz}
\usepackage{indentfirst} 
\usepackage[T1]{fontenc}
\usepackage[sc]{mathpazo}
\usepackage[T1]{fontenc}
\usepackage[sc]{mathpazo}

\linespread{1.2}
\usepackage{geometry}
\geometry{
 a4paper,
 left=20mm,
 right=20mm,
 top=20mm,
 }
\usepackage{subfiles}
\usepackage{csquotes}
\usepackage{fancyhdr}

\setlength{\headsep}{25pt}

\newtheorem{defn}{Definition}[section]
\newtheorem{prop}[defn]{Proposition}
\newtheorem{thm}[defn]{Theorem}
\newtheorem{lem}[defn]{Lemma}
\newtheorem{cor}[defn]{Corollary}
\newtheorem{ex}[defn]{Example}
\newtheorem{rem}[defn]{Remark}
\newtheorem{conj}[defn]{Conjecture}
\newtheorem{ques}[defn]{Question}
\newtheorem{nota}[defn]{Notation}

\newenvironment{corollary}{\bigskip \begin{cor}}{\end{cor}}

\newcommand{\Z}{\mathbb{Z}}
\newcommand{\Q}{\mathbb{Q}}
\newcommand{\R}{\mathbb{R}}

\newcommand{\im}{\operatorname{Im}}
\renewcommand{\ker}{\operatorname{Ker}}

\newcommand{\Homeo}{\operatorname{Homeo}}

\newcommand{\Out}{\operatorname{Out}}
\newcommand{\rank}{\operatorname{rank}}
\newcommand{\Aut}{\operatorname{Aut}}
\newcommand{\Inn}{\operatorname{Inn}}
\newcommand{\SO}{\operatorname{SO}}

\newcommand{\Gl}{\operatorname{GL}}
\newcommand{\Sl}{\operatorname{SL}}

\newcommand{\Zc}{\mathcal{Z}}

\newcommand{\D}{\operatorname{disc-sym}}
\newcommand{\A}{\operatorname{tor-sym}}
\newcommand{\acts}{\curvearrowright}
\newcommand{\Stab}{\operatorname{Stab}}
\newcommand{\Fitt}{\operatorname{Fitt}}

\newcommand{\Aff}{\operatorname{Aff}}
\newcommand{\vol}{\operatorname{vol}}

\pagestyle{plain}

\begin{document}
	
	\title{Iterated finite group actions on closed connected aspherical manifolds}
	\author{Jordi Daura Serrano}
	\address{Jordi Daura Serrano, Department de Màtematiques i Informàtica, Universitat de Barcelona (UB), Gran Via de les Corts Catalanes 585, 08007 Barcelona (Spain)}
	\email{jordi.daura@ub.edu}
	
	\maketitle
	\pagenumbering{arabic}
	\begin{abstract}
	In this paper we use free iterated actions and the iterated discrete degree of symmetry introduced in \cite{daura2025IGG1} to obtain rigidity results on aspherical manifolds. We also introduce the concept of the length of an iterated action and we study it for nilmanifolds, solvmanifold and locally symmetric spaces.
	\end{abstract}
	\noindent
	{\it 2020 Mathematics Subject Classification: 57S17, 54H15}
	\date{}

\section{Introduction}

Let $M$ be a closed connected manifold. Can we determine $M$ up to homeomorphism if we know the collection of finite group which acts effectively on $M$? The answer to this question is clearly negative since there exist many closed connected asymmetric manifolds (manifolds which do not admit any effective group action of a non-trivial finite group) which are not homeomorphic. Indeed, it is conjectured in \cite{puppe2007manifolds} and \cite{schultz1981group} that "most" manifolds are asymmetric. The question becomes interesting if we assume that $M$ admits effective group actions of large finite groups in some sense. A way to make this question precise is to use the discrete degree of symmetry introduced in \cite{mundet2021topological}. 

\begin{defn}\label{def:discsym intro}
	Given a manifold $M$ let 
	$$\mu(M)=\{r\in\mathbb{N}:\text{ $M$ admits an effective action of $(\Z/a)^r$ for arbitrarily large $a$}\}.$$
	
	More explicitly, $r\in \mu(M)$ if there exists an increasing sequence of natural number $\{a_i\}$ and effective group actions of $(\Z/a_i)^r$ on $M$ for each $i$.
	
	The discrete degree of symmetry of a manifold $M$ is
	$$\D(M)=\max(\{0\}\cup\mu(M)).$$
\end{defn}

The discrete degree of symmetry was introduced in \cite{mundet2021topological}, and it can be considered an analogue for finite group actions of the toral degree of symmetry, which is defined as $$ \A(M)=\max(\{0\}\cup\{r\in\mathbb{N}:T^r \text{ acts effectively on $M$}\}),$$ and which has been widely studied (see \cite[Chapter VII. \S2]{hsiang2012cohomology}, \cite[\S 11.7, \S11.8]{lee2010seifert} and the survey \cite{grove2002geometry}). Recall that if $M$ is a closed connected $n$-dimensional manifold, then $\A(M)\leq n$ and the equality holds if and only if $M$ is homeomorphic to $T^n$ (see \cite[12.2]{mundet2021topological}). A natural question to ask is whether this result is still true if we replace $\A(M)$ by $\D(M)$:

\begin{ques}\cite[Question 3.4, Question 3.5]{riera2023actions}
Let $M$ be a closed connected $n$-dimensional manifold. Is $\D(M)\leq n$? If $\D(M)=n$, is $M$ homeomorphic to $T^n$?
\end{ques}

These questions have been affirmatively answered for some classes of manifolds:

\begin{thm}\label{discsym rigidity}
Let $M$ be a closed connected $n$-dimensional manifold. Then:
\begin{itemize}
	\item[(1)] \cite[Theorem 1.3]{mundet2021topological} If $M$ is oriented and there exists a non-zero degree map $M\longrightarrow T^n$ then $\D(M)\leq n$ and if the equality holds and $\pi_1(M)$ is virtually solvable then $M\cong T^n$.
	\item[(2)] \cite[Theorem 1.6, Theorem 1.9]{daura2024actions} If $M$ is aspherical and there exists a constant $C$ such that every finite subgroup $G$ of $\Out(\pi_1(M))$ satisfies $|G|\leq C$, then $\D(M)\leq n$ and the equality holds if and only if $M\cong T^n$.
\end{itemize} 
\end{thm}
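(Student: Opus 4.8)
Since the two parts have hypotheses of a different nature, I would prove them separately; in each the substance is the bound $\D(M)\le n$, the rigidity clause coming from a structural analysis of the extremal case.

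\emph{Part (1).} The plan is to convert the non-zero degree map $f\colon M\to T^n$ into cohomological input. Since $\deg f\neq 0$, the map $f^*\colon H^*(T^n;\Q)\to H^*(M;\Q)$ is injective, so $H^1(M;\Q)$ contains classes $x_1,\dots,x_n$ with $x_1\cdots x_n\neq 0$ in $H^n(M;\Q)$; in particular $b_1(M)\ge n$. Given an effective action of $(\Z/a)^r$ on $M$ with $a$ large, I would restrict to a subgroup to reduce to an effective action of a large $p$-group $A=(\Z/q)^r$, $q=p^m$ (handling the finitely many exceptional primes, at which the $x_i$ may fail to survive mod $p$, separately), and then use that $A$, being a $p$-group acting on the finite-dimensional space $H^*(M;\F_p)$, acts unipotently. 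Combining this with the $A$-equivariant Albanese map $M\to J:=H_1(M;\R)/\mathrm{im}\,H_1(M;\Z)$ --- equivariance being arrangeable since the obstruction lies in $H^1$ of a finite group with divisible coefficients and hence vanishes --- and with Smith theory and localisation along the subgroups of $A$, one should force $r\le n$. \textbf{The main obstacle is exactly this last step: extracting $r\le n$ from effectiveness of the action on $M$ rather than from effectiveness on cohomology, which genuinely can fail; this is where the non-zero-degree hypothesis is indispensable.} For the rigidity clause, $r=n$ pins $H^*(M;\Q)$ down to $\Lambda^*\Q^n$ and forces $b_1(M)=n$; feeding in that $\pi_1(M)$ is virtually solvable, the extremal action forces $\pi_1(M)$ to be virtually $\Z^n$, in fact $\Z^n$, and $M$ to have the homotopy type of $T^n$, whereupon topological rigidity of the torus upgrades this to $M\cong T^n$.

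\emph{Part (2).} Write $\pi=\pi_1(M)$; closed asphericity makes $\pi$ a torsion-free Poincar\'e duality group of dimension $n$, so $Z(\pi)$ is free abelian of some rank $k\le n$. Given $(\Z/a)^r$ acting effectively on $M$ with $a$ large, the induced homomorphism to $\Out(\pi)$ has image of order at most $C$, so restricting to a subgroup of bounded index I reduce to an effective action of $A=(\Z/q)^r$, $q=p^m$ still large, with trivial image in $\Out(\pi)$. Lifting to the universal cover $\widetilde M\cong\R^n$ yields an extension $1\to\pi\to\Gamma\to A\to 1$; triviality in $\Out(\pi)$ lets me adjust the lifts so that the centraliser $C:=C_\Gamma(\pi)$ still surjects onto $A$, producing a central extension $1\to Z(\pi)\to C\to A\to 1$ with $C$ finitely generated nilpotent of class $\le 2$ in which $Z(\pi)$ has finite index. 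I would then bound $r$ by $k$ by a Hirsch-length and abelianisation count for $C$, treating the torsion-free case (the action being free) separately. \textbf{The crux is ruling out a Heisenberg-type obstruction: a central extension of $(\Z/q)^r$ by $\Z^k$ with $r>k$ that still remains compatible with the ambient action of $\Gamma$ on $\R^n$.} For rigidity, $r=n$ forces $\rank Z(\pi)=n$, hence $\pi$ virtually $\Z^n$, hence $\pi\cong\Z^n$ since $\pi$ is a torsion-free $\mathrm{PD}_n$ group; as $M$ is aspherical with $\pi_1=\Z^n$, topological rigidity of the torus gives $M\cong T^n$, while the reverse inequality $\D(T^n)\ge n$ is immediate from translation actions.
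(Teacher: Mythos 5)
This theorem is not proved in the paper at all: it is quoted verbatim from \cite{mundet2021topological} and \cite{daura2024actions}, so there is no internal proof to compare against. Judged on its own terms, your proposal is a reasonable roadmap --- part (2) in particular follows exactly the strategy the paper uses elsewhere (pass to the kernel of $\psi\colon A\to\Out(\pi_1(M))$ using the Minkowski hypothesis, lift to the universal cover, and analyse the central extension $1\to\Zc\pi_1(M)\to C_{\tilde G}(\pi_1(M))\to\ker\psi\to 1$ from \cref{aspherical manifolds big diagram}) --- but in both parts the decisive step is flagged as an ``obstacle''/``crux'' rather than carried out, so as it stands this is a plan, not a proof.

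Concretely: in part (2) you assert that $\Zc\pi_1(M)$ is free abelian of finite rank $k\le n$ ``since $\pi$ is a torsion-free $\mathrm{PD}_n$ group''; finite generation of the centre of an aspherical manifold group is not known in general (the paper itself remarks on this), so this needs either an extra hypothesis or an argument that avoids it. Your ``Heisenberg-type obstruction'' is real but has a standard resolution you do not supply: by \cref{finite group actions aspherical manifolds} the centraliser $C=C_{\tilde G}(\pi_1(M))$ is torsion-free (a nontrivial theorem of Lee--Raymond, valid for effective, not just free, actions), and a torsion-free group that is central-by-finite is abelian by Schur's theorem, whence $\ker\psi$ is a finite quotient of a torsion-free abelian group of rank $\le n$ and $r\le n$ follows; without invoking torsion-freeness of $C$ the bound genuinely fails. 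In part (1) the situation is worse: the inequality $r\le n$ is exactly the content of the cited theorem, and ``Smith theory and localisation \dots should force $r\le n$'' does not identify the mechanism (the actual proof requires a quantitative argument exploiting the classes $x_1,\dots,x_n$ with $x_1\cdots x_n\neq0$, e.g.\ via rotation/Albanese data and a divisibility count, precisely because effectiveness on $M$ does not imply effectiveness on cohomology). Moreover, in part (1) $M$ is \emph{not} assumed aspherical, so even after concluding $\pi_1(M)\cong\Z^n$ you must still rule out manifolds such as $T^n\#(S^2\times S^{n-2})$, which carry degree-one maps to $T^n$ and have fundamental group $\Z^n$ but are not homotopy equivalent to $T^n$; your claim that the extremal action ``forces $M$ to have the homotopy type of $T^n$'' is therefore another unproven essential step.
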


\begin{rem}
	Following the notation introduced in \cite{daura2024actions}, we will say that a group $\mathcal{G}$ is Minkowski if there exists a constant $C$ such that every finite subgroup $G\leq \mathcal{G}$ satisfies $|G|\leq C$. This name is motivated by a classical result of Hermann Minkowski which states that $\Gl(n,\Z)$ is Minkowski. The Minkowski property was studied under the name of bounded finite subgroups property in \cite{popov2011makar,prokhorov2014jordan,popov2018jordan,golota2023finite,bandman2024jordan}.
\end{rem}

The discrete degree of symmetry is useful to detect when a closed connected manifold is homeomorphic to $T^n$. However, it falls short to obtain rigidity results for other classes of manifolds. A class of manifolds which generalize tori are nilmanifolds, which are coset spaces $N/\Gamma$ where $N$ is a simply connected nilpotent Lie group and $\Gamma$ is a lattice of $N$. An example of nilmanifold in dimension $3$ are Heisenberg manifolds, which can be obtained as the total space of a non-trivial principal $S^1$-bundle over $T^2$. It is straightforward to prove that their discrete degree of symmetry is equal to $1$. However, there exist many other $3$-dimensional manifolds with discrete degree of symmetry equal to 1 which are not homeomorphic to a Heisenberg manifold (see \cref{main theorem10 intro} below and \cite{daura2024actions}).

Motivated by the fact that nilmanifolds are precisely the total space of iterated principal $S^1$-bundles (see \cite{belegradek2020iterated}), we introduced free iterated actions of finite groups and the iterated discrete degree of symmetry in \cite{daura2025IGG1} to generalize \cref{discsym rigidity}.(2). Iterated group actions were previously studied with other purposes by other authors, see \cite{baker2001towers,figueroa2012half,van2018towers,van2021structure,baues2023isometry,qin2021self}.

Let us briefly recall some of the concepts introduced in \cite{daura2025IGG1}.
 
\begin{defn}
	Let $\mathcal{G}=\{G_i\}_{i=1,\dots,n}$ be a collection of groups and let $X$ be a topological space. An iterated action of $\mathcal{G}$ on $X$ (denoted by $ \mathcal{G}\acts X$) is:
\begin{itemize}
	\item[1.] A sequence of surjections of topological spaces
	\[\begin{tikzcd}
		X=X_0 \ar{r}{p_1} & X_1 \ar{r}{p_2} & X_2 \ar{r}{p_3} & \cdots \ar{r}{p_n} & X_n,
	\end{tikzcd}\]
	\item[2.] and a collection of group actions $\{\Phi_i:G_i\longrightarrow \Homeo(X_{i-1})\}_{i=1,\dots,n}$,
\end{itemize}
such that the maps $p_i:X_{i-1}\longrightarrow X_i$ are the orbit maps of the action of $G_i$ on $X_{i-1}$. If all actions are free we say that the iterated action is free.

Let $\mathcal{G}=\{G_i\}_{i=1,...,n}$ and $\mathcal{G}'=\{G'_i\}_{i=1,...,n'}$ be  two collections of finite groups which act freely on $M$. Let $p=p_{n}\circ\cdots \circ p_1$ and $p'=p'_{n'}\circ\cdots\circ p'_1$. We say that the iterated actions $\mathcal{G}\acts M$ and $\mathcal{G}'\acts M$ are equivalent (and we denote it by $\mathcal{G}\acts M\sim\mathcal{G}'\acts M$) if there exist homeomorphisms $f:M_n\longrightarrow M_{n'}$ and $\overline{f}:M\longrightarrow M$ such that $p'\circ \overline{f}=f\circ p$ (that is, $p$ and $p'$ are isomorphic coverings).

The equivalence class will be denoted by $[\mathcal{G}\acts M]$. If there exists a finite group $G$ such that $\mathcal{G}\acts M$ is equivalent to $\{G\}\acts M$ then we say that $\mathcal{G}\acts M$ is simplifiable.

Given a free iterated action $\mathcal{A}\acts M$ of finite abelian groups, the rank of the iterated action is $$\rank_{ab}(\mathcal{A}\acts M)=\min\{\sum_{i=1}^{n}\rank A'_i:\{A_1',\cdots,A_n'\}\acts M\in [\mathcal{A}\acts M]\text{, $A_i'$ abelian for all $i$}\}.$$	
We define $\mu_2(M)$ as the set of all pairs $(f,b)\in \mathbb{N}^2$ which satisfy:
\begin{itemize}
	\item[1.] There exist an increasing sequence of prime numbers $\{p_i\}$, a sequence of natural numbers $\{a_i\}$ and a collection of free iterated actions $\{(\Z/p_i^{a_i})^f,(\Z/p_i)^b\}\acts M$ for each $i\in\mathbb{N}$.
	\item[2.] $\rank_{ab}(\{(\Z/p_i^{a_i})^f,(\Z/p_i)^b\}\acts M)=f+b$ for each $i\in\mathbb{N}$.
\end{itemize}

Consider the lexicographic order in $\mathbb{N}^2$, that is $(a,b)\geq (c,d)$ if $a>c$, or $a=c$ and $b\geq d$. Define the iterated discrete degree of symmetry of $M$ as
$$\D_2(M)=\max\{(0,0)\cup\mu_2(M)\}.$$
\end{defn}

The iterated discrete degree of symmetry of a manifold is a refinement of the degree of symmetry of a manifold. The next theorem, proved in \cite{daura2025IGG1}, is a generalization of \Cref{discsym rigidity}.(1).

\begin{thm}\cite{daura2025IGG1}
	Let $M$ be a closed connected orientable $n$-dimensional manifold admitting a non-zero degree map $M\longrightarrow N/\Gamma$ to a $2$-step nilmanifold satisfying that $\rank\Zc\Gamma=f$. Then $\D_2(M)\leq (f,n-f)$. Moreover, if $\D_2(M)=(f,n-f)$ and $\pi_1(M)$ is virtually solvable then $M\cong N/\Gamma$.
\end{thm}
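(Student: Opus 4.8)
The plan is to mirror the structure of the proof of Theorem \ref{discsym rigidity}.(1), replacing the torus target by a $2$-step nilmanifold and using the iterated framework of \cite{daura2025IGG1} in place of the single-action framework. First I would establish the upper bound $\D_2(M)\leq (f,n-f)$. Suppose a pair $(f',b')\in\mu_2(M)$; then for arbitrarily large primes $p$ there is a free iterated action $\{(\Z/p^a)^{f'},(\Z/p)^{b'}\}\acts M$ with $\rank_{ab}$ equal to $f'+b'$. Composing with the given non-zero degree map $g:M\to N/\Gamma$, one wants to push the action forward, or at least extract enough cohomological information. The key tool is a Smith-theory / Lefschetz-fixed-point style argument together with the fact that a degree-one (or non-zero degree) map induces an injection on rational cohomology in the appropriate sense; the first factor $(\Z/p^a)^{f'}$ must ``see'' the part of $H^*(N/\Gamma;\Q)$ coming from the center — precisely $\rank\Zc\Gamma=f$ generators — forcing $f'\leq f$, and then a counting of the remaining cohomological dimension bounds $f'+b'\leq n$, hence $b'\leq n-f'$ when $f'=f$. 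I would phrase this via the action of the elementary abelian quotients on $H^*(M;\mathbb{F}_p)$ and transfer the estimate through $g^*$, exactly as in \cite{mundet2021topological,daura2025IGG1}.

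For the rigidity statement, assume $\D_2(M)=(f,n-f)$ and $\pi_1(M)$ virtually solvable. The first step is to use the hypothesis $\pi_1(M)$ virtually solvable together with asphericity-type input: a non-zero degree map to a nilmanifold, combined with a large iterated action realizing the maximal $\D_2$, should force $\pi_1(M)$ to be virtually nilpotent and then, by the Bieberbach/Mostow rigidity for nilmanifolds and the work on virtually solvable fundamental groups, to be isomorphic to $\Gamma$. Concretely, I would first show that the tower of covers $M=M_0\to M_1\to\cdots$ of the iterated action, at each large prime $p$, forces the rational homotopy / cohomology of $M$ to coincide with that of $N/\Gamma$, and in particular $M$ is a rational homology nilmanifold of the right type. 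Then I would invoke the virtual solvability of $\pi_1(M)$ to upgrade this to an actual $\pi_1$-isomorphism $\pi_1(M)\cong\Gamma$: the $2$-step nilpotent lattice $\Gamma$ is detected by the lower central series, and the $(\Z/p^a)^f$-part of the iterated action acting on the ``central direction'' pins down $\rank\Zc\pi_1(M)=f=\rank\Zc\Gamma$ and forces the quotient to be the lattice $\Gamma/\Zc\Gamma$ in $N/\Zc N$. Finally, since $M$ is aspherical-by-the-degree-one-map argument (or one reduces to that case) and $\pi_1(M)\cong\Gamma=\pi_1(N/\Gamma)$, Borel-type rigidity for nilmanifolds (they are topologically rigid) yields $M\cong N/\Gamma$.

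The main obstacle I anticipate is the middle step: passing from ``$M$ has the rational cohomology/homotopy type of $N/\Gamma$ and admits these large iterated actions'' to ``$\pi_1(M)\cong\Gamma$'' without already assuming $M$ is aspherical. In Theorem \ref{discsym rigidity}.(1) the analogous difficulty is handled because the target is $T^n$ and virtually solvable groups with the right cohomology are virtually abelian of the right rank, hence $\Z^n$; here one must instead control the nilpotency class and the center simultaneously, and the iterated action is exactly the device that should do this — the $f$ ``deep'' $\Z/p^a$-factors must act through the center because the center is where the non-trivial bundle class lives, so these factors cannot be detected in $H^1$ in a way that would contradict $\rank_{ab}=f+b$. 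Making this compatibility precise, i.e.\ showing the iterated-action constraint on ranks genuinely forces the first $f$ factors to be ``central'' and the remaining $b$ factors to split off a torus direction in $\pi_1^{\mathrm{ab}}$, is the technical heart of the argument. The remaining steps — the Smith-theory upper bound and the final appeal to topological rigidity of nilmanifolds — I expect to be routine given the machinery of \cite{daura2025IGG1}.
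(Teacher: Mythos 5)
This theorem is not proved in the present paper: it is quoted from \cite{daura2025IGG1}, so there is no in-text argument to compare your proposal against. Judged on its own terms, your text is a programme rather than a proof, and the gap you yourself flag as ``the technical heart'' is precisely the step that cannot be waved through. Two concrete problems. First, the upper bound. Your mechanism --- that the factor $(\Z/p^{a})^{f'}$ must ``see'' the $f$ cohomology generators coming from $\Zc\Gamma$ --- has the geometry backwards: for a $2$-step nilmanifold the degree-one part of the cohomology, $H^1(N/\Gamma;\Q)\cong\Q^{\,b}$ with $b=n-f$, is carried by the abelianized quotient $\Gamma/\Zc\Gamma$, not by the center; the central $T^f$-fiber direction is visible only through the $k$-invariant/cup-product structure in higher degrees. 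So the assertion $f'\leq f$ does not follow from ``counting generators seen by the first factor''; one needs an argument explaining why the \emph{large-exponent} factors are forced into the fiber direction (this is where the hypothesis that $p^{a}$ is unbounded while the second factor has exponent $p$, together with the minimality condition $\rank_{ab}=f+b$, must enter), and your sketch does not supply it. Second, the rigidity step. A non-zero degree map to an aspherical manifold does not make the source aspherical, so ``$M$ is aspherical by the degree-one-map argument'' is not available; in the torus case of \Cref{discsym rigidity}.(1) the asphericity of $M$ is a \emph{conclusion} extracted from the interplay of the large actions, the degree map, and virtual solvability (forcing $\pi_1(M)$ to be the right polycyclic group of Hirsch length $n$ and the classifying map to be a homology equivalence), not an input. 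Without that step your appeal to Borel rigidity has nothing to apply to.

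A secondary issue: even granting that $\pi_1(M)$ is virtually nilpotent with the right center rank, you still must rule out the infranilmanifold (non-nilmanifold) possibilities and identify the lattice as $\Gamma$ itself rather than some other $2$-step lattice with the same $(f,b)$; the non-zero degree map $M\to N/\Gamma$ is what pins down the isomorphism type, and this requires an explicit argument (e.g.\ that the induced map $\pi_1(M)\to\Gamma$ is injective with finite-index image and that the degree forces it to be onto), which is absent from the sketch. In short, the overall architecture (cohomological upper bound through $g^*$, then group-theoretic rigidity, then topological rigidity) is the plausible one, but none of the three load-bearing steps is actually carried out, and the first two are stated with justifications that would not survive being made precise.
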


Thus, the iterated discrete degree of symmetry is a suitable tool to study rigidity problems on manifolds related with 2-step nilmanifolds. The main goal of this paper is to generalize \cref{discsym rigidity}.(2). In particular, we prove:

\begin{thm}\label{main theorem9 intro}
	Let $M$ be a closed connected $n$-dimensional aspherical manifold such that $\Zc\pi_1(M)$ and $\Zc\Inn\pi_1(M)$ are finitely generated, and $\Aut(\Inn\pi_1(M))$ and $\Out(\Inn\pi_1(M))$ are Minkowski. Assume that $\D_2(M)=(f,b)$ with $f+b=n$. Then $M$ is homeomorphic to a 2-step nilmanifold $N/\Gamma$ satisfying that $\rank\Zc\Gamma=f$.
\end{thm}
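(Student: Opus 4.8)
The plan is to run a two-step version of the argument behind \Cref{discsym rigidity}.(2), using the iterated action that witnesses $\D_2(M)=(f,b)$ to manufacture a collection of commuting circle actions and a free torus action on an intermediate cover, and then to invoke Borel-conjecture-type rigidity for aspherical manifolds to identify $M$ with a nilmanifold. Concretely, for each prime $p_i$ in the defining sequence we have a free iterated action $\{(\Z/p_i^{a_i})^f,(\Z/p_i)^b\}\acts M$ of ab-rank exactly $f+b=n$. The first task is to understand how such a chain sits inside $\pi_1(M)$: writing $\Gamma=\pi_1(M)$, the inner covering corresponding to the $(\Z/p_i^{a_i})^f$-action is a finite cover $M_i'\to M$ with deck group $(\Z/p_i^{a_i})^f$, and the $(\Z/p_i)^b$-action upstairs gives a further cover. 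I would first show, using the hypotheses that $\Zc\Gamma$ and $\Zc\Inn\Gamma$ are finitely generated and $\Aut(\Inn\Gamma)$, $\Out(\Inn\Gamma)$ are Minkowski, that for $i$ large the abelian $p$-group $(\Z/p_i^{a_i})^f$ must act through translations coming from a free abelian subgroup of $\Zc\Gamma$ of rank $\geq f$ (this is the place the Minkowski hypotheses do their work: a large elementary-or-cyclic $p$-group in $\Out$ is impossible, so the action is "mostly central"), and that the residual $b$-part forces $\rank\Zc\Gamma=f$ exactly, since the ab-rank is $n$ and $b=n-f$.

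Granting that, the second step is to promote the discrete symmetry to continuous symmetry. The free $(\Z/p_i^{a_i})^f$-actions, being by central translations for all large $i$ and of unbounded order, should be shown to come from a free action of a torus $T^f$ on $M$ (this is the standard "discrete-to-toral" upgrade: a sequence of free $(\Z/a_i)^r$-actions by covering translations on an aspherical manifold whose $\pi_1$ has a central $\Z^r$ fits together into a free $T^r$-action, compare the degree-of-symmetry literature cited in the introduction). Quotienting by $T^f$ yields a closed aspherical manifold $B^{n-f}$ of dimension $b$ with $\D(B)\geq b$ coming from the residual $(\Z/p_i)^b$-part, and with $\Out(\pi_1 B)$ Minkowski (inherited from the Minkowski hypothesis on $\Gamma$, since $\pi_1 B$ is a quotient of $\Gamma$ by a central $\Z^f$ and the relevant $\Out$ controls it). Now \Cref{discsym rigidity}.(2) applies to $B$: it gives $\D(B)\leq b$ with equality iff $B\cong T^b$, so $B\cong T^b$. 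Finally $M$ is the total space of a principal $T^f$-bundle over $T^b$ with aspherical total space; such a bundle, having $\Gamma$ virtually nilpotent (an extension of $\Z^b$ by $\Z^f$) and being aspherical, is homeomorphic to the nilmanifold $N/\Gamma$ modelled on $\Gamma$ by Borel/rigidity for nilmanifolds (or more elementarily, classified by an element of $H^2(T^b;\Z^f)$ and realized by a nilmanifold), and $\rank\Zc\Gamma=f$ by construction.

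The main obstacle I expect is the first step: controlling the structure of the iterated $p$-group actions inside $\Gamma$ uniformly in $i$, and in particular showing that the $f$-part is forced to be central. The subtlety is that an iterated action need not be simplifiable, so the $(\Z/p_i^{a_i})^f$-action is a priori only the composite of $f$ successive cyclic covers rather than a single $(\Z/p_i^{a_i})^f$-cover; one must argue that ab-rank $=f+b$ together with the Minkowski hypotheses on $\Aut(\Inn\Gamma)$ and $\Out(\Inn\Gamma)$ rules out any "twisting" that would let these covers interact non-abelianly or non-centrally. This is exactly the kind of analysis carried out in \cite{daura2025IGG1,daura2024actions} for the Minkowski case, so I would expect to cite and adapt the key lemmas there — in particular the statement that for an aspherical $M$ with $\Out(\pi_1 M)$ Minkowski a large free abelian $p$-group action is, up to equivalence, by central covering translations. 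The remaining steps (discrete-to-toral upgrade, applying \Cref{discsym rigidity}.(2) to the quotient, and recognizing the $T^f$-bundle over $T^b$ as a nilmanifold) are comparatively routine given the literature cited in the introduction.
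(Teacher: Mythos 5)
Your proposal has a genuine gap at its central step: the ``discrete-to-toral upgrade.'' You assert that a sequence of free $(\Z/p_i^{a_i})^f$-actions by central covering translations on an aspherical manifold with $\Z^f\leq\Zc\pi_1(M)$ ``fits together into a free $T^f$-action.'' This is not a standard fact and is not available here: the actions for different $p_i$ are genuinely different actions (not restrictions of a single one), and whether $\D(M)=\A(M)$ for closed aspherical manifolds is precisely the kind of question this circle of papers leaves open --- for infranilmanifolds one does have $\D(M)=\rank\Zc\pi_1(M)=\A(M)$, but using that would be circular since the nilmanifold structure is what you are trying to prove. The paper's whole strategy is designed to avoid producing any continuous symmetry: it works entirely with the fundamental groups $\tilde G_i=\pi_1(M/(\Z/p_i^{a_i})^f)$ of the quotients. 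Even granting a free $T^f$-action, your next step --- pushing the residual $(\Z/p_i)^b$-actions down to $B=M/T^f$ to get $\D(B)\geq b$ --- is unjustified, since those actions live on the finite quotients $M/(\Z/p_i^{a_i})^f$ and there is no reason they commute with, or descend along, the torus action.

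You also do not engage with the step that actually produces the $2$-step nilpotent structure. In the paper, one first shows (using $\rank_{ab}=f+b$ and \cref{inner actions simplifiable}) that the second-step map $\psi_i'\colon(\Z/p_i)^b\to\Out(\tilde G_i)$ is \emph{injective}; then, since $\Out(\Inn\tilde G_i)\cong\Out(\Inn\pi_1(M))$ and $\Aut(\tilde G_i)$ are Minkowski with constants independent of $i$ (this uniformity is the key observation, and is where the hypotheses on $\Aut(\Inn\pi_1(M))$ and $\Out(\Inn\pi_1(M))$ enter), the image of $(\Z/p_i)^b$ in $\Aut(\tilde G_i)$ centralizes $\Inn\pi_1(M)$ for $i$ large, giving a central extension of $(\Z/p_i)^b$ by $\Zc\Inn\pi_1(M)$ and hence $\Z^b\leq\Zc\Inn\pi_1(M)$. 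Pulling back yields a finite-index finitely generated torsion-free $2$-step nilpotent subgroup $\Gamma\leq\pi_1(M)$, so $M$ is an infranilmanifold by \cref{AC-groups charactherization} and the Borel conjecture; a final argument with the Fitting subgroup and the rational Mal'cev completion (\cref{infranilmanifolds malcev completion injective}, \cref{infranilmanifold H1}) kills the holonomy and shows $M$ is a nilmanifold. Your sketch replaces all of this with the torus quotient and an appeal to \cref{discsym rigidity}.(2), which cannot be made to work without first resolving the discrete-versus-toral issue.
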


It is an interesting question whether all closed connected aspherical manifolds satisfy the hypothesis of \cref{main theorem9 intro}.

No closed aspherical manifold with $\Zc\pi_1(M)$ not finitely generated is known (see \cite[Remark 3.1.19.]{lee2010seifert}). In \cite{belk2022embedding}, Belk, Hyde and Matucci constructed a finitely presented group $G$ such that $\Zc G\cong (\Q,+)$, which is not finitely generated (see also \cite[Theorem II]{houcine07}). We do not know if $G$ could be the fundamental group of a closed connected aspherical manifold (we do not even know if $G$ is torsion-free or the cohomology of $G$ is finite and satisfies Poincaré duality).

Regarding the conditions on the automorphism and outer automorphism groups, we prove:

\begin{prop}\label{autoutinn minkowski}
Let $\Gamma$ be a lattice of a connected Lie group $G$, then $\Aut(\Inn\Gamma)$ and $\Out(\Inn\Gamma)$ are Minkowski.
\end{prop}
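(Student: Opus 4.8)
The plan is to reduce the statement to the coarse algebraic structure of $\Inn\Gamma=\Gamma/\Zc\Gamma$ furnished by the structure theory of lattices, and then to bound the finite subgroups of the relevant automorphism groups block by block. Throughout I use two elementary stability properties of the class of Minkowski groups: it is closed under passing to subgroups, and if $1\to A\to B\to C\to 1$ is exact with $A$ and $C$ Minkowski then $B$ is Minkowski (a finite subgroup of $B$ meets $A$ in a bounded subgroup and has bounded image in $C$); in particular the class is invariant under commensurability. I also use the fact, already part of the toolkit of \cite{daura2024actions}, that a finitely generated linear group over a field of characteristic zero is Minkowski (a Minkowski/Selberg‑type argument: spread out the matrix entries over a finitely generated $\Z$‑algebra and reduce modulo a smooth maximal ideal over a large odd prime, so that finite subgroups inject into a fixed finite general linear group).

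Next I would invoke Mostow's structure theory of lattices in connected Lie groups (see e.g.\ Raghunathan, \emph{Discrete subgroups of Lie groups}): there is a finite‑index subgroup $\Gamma_0\leq\Gamma$ carrying a characteristic virtually polycyclic normal subgroup $\Delta$ (a lattice in the solvable radical) such that $\Gamma_0/\Delta$ is either virtually polycyclic — in which case $\Gamma_0$ itself is virtually polycyclic — or, after a further finite‑index passage, a lattice $\Lambda$ of trivial centre in a connected semisimple Lie group without compact factors (centrelessness via Borel density). Since $\Zc\Gamma$ is abelian and normal it lies in the polycyclic radical, so this decomposition descends to a finite‑index subgroup of $\Inn\Gamma$; by commensurability invariance it therefore suffices to prove that $\Aut Q$ and $\Out Q$ are Minkowski whenever $Q$ fits in an extension $1\to\Delta\to Q\to\Lambda\to 1$ with $\Delta$ virtually polycyclic and characteristic in $Q$ and $\Lambda$ either virtually polycyclic or a centreless semisimple lattice without compact factors. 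The delicate point of this reduction, which I would spell out, is the bookkeeping guaranteeing that the passages between $\Gamma$, $\Gamma_0$ and $\Inn\Gamma$ go through \emph{characteristic} finite‑index subgroups, so that the associated automorphism groups are genuinely commensurable.

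Now the two blocks. If $\Delta$ is virtually polycyclic then $\Aut\Delta$ and $\Out\Delta$ are virtually arithmetic groups, in particular linear over $\Z$ up to finite index, hence Minkowski (Auslander; see also \cite{lee2010seifert}). If $\Lambda$ is a centreless semisimple lattice without compact factors, then $\Inn\Lambda=\Lambda$ is finitely generated and linear over $\R$, hence Minkowski by the fact quoted above, while $\Out\Lambda$ is Minkowski because Mostow–Prasad strong rigidity and Margulis superrigidity force it to be finite except for $\PSL(2,\R)$‑factors, for which the corresponding factor of $\Out\Lambda$ is an extended mapping class group of a surface and is Minkowski by the Hurwitz bound $84(g-1)$ on finite group actions (Nielsen realization); thus $\Aut\Lambda$, an extension of $\Out\Lambda$ by $\Lambda$, is Minkowski.

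Finally I assemble the extension $1\to\Delta\to Q\to\Lambda\to 1$. Restriction along the characteristic subgroup $\Delta$ gives a homomorphism $\Aut Q\to\Aut\Delta\times\Aut\Lambda$; a direct computation (using that $\Delta$ is normal, one checks that any automorphism fixing $\Delta$ pointwise and inducing the identity on $\Lambda$ has the form $q\mapsto q\,\delta(q)$ with $\delta$ a cocycle valued in $\Zc\Delta$) identifies the kernel with $Z^1(\Lambda,\Zc\Delta)$, a finitely generated abelian group since $\Lambda$ is finitely generated and $\Zc\Delta$ is finitely generated abelian; as the target is Minkowski by the previous paragraph, so is $\Aut Q$. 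For $\Out Q$, the action on $Q/\Delta=\Lambda$ gives a homomorphism $\Out Q\to\Out\Lambda$ (inner automorphisms of $Q$ induce inner automorphisms of $\Lambda$), with image in the Minkowski group $\Out\Lambda$; since $\Lambda$ is centreless one normalizes an automorphism inducing an inner automorphism of $\Lambda$ to one inducing the identity on $Q/\Delta$, so the kernel of $\Out Q\to\Out\Lambda$ is an extension of a subgroup of $\Out\Delta$ by a quotient of $Z^1(\Lambda,\Zc\Delta)$, hence Minkowski, and therefore $\Out Q$ is Minkowski. I expect the main obstacle to be not any single one of these steps but precisely the reduction of the second paragraph: extracting from Mostow's theory the clean statement that $\Inn\Gamma$ is, up to finite index, virtually polycyclic‑by‑(centreless semisimple lattice) while correctly tracking centres, compact factors and finite‑index passages — together with the reminder that in the $\Out$‑estimate one must use the Hurwitz/Nielsen‑realization bound rather than rigidity whenever $\PSL(2,\R)$‑factors occur.
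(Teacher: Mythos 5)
Your proposal is correct and follows essentially the same route as the paper, which also writes $\Inn\Gamma$ as an extension $1\to\Gamma_A/\Zc\Gamma\to\Inn\Gamma\to\Gamma_{nc}\to1$ with virtually polycyclic kernel and centreless semisimple-lattice quotient, handles the two blocks exactly as you do, and assembles via the Malfait--Wells exact sequences (deferring the details to \cite[Section 6]{daura2024actions}). The finite-index bookkeeping you single out as the delicate point in fact dissolves: the structure theorem used in the paper produces the exact sequence $1\to\Gamma_A\to\Gamma\to\Gamma_{nc}\to1$ for $\Gamma$ itself, with $\Gamma_A$ only \emph{virtually} polycyclic and $\Gamma_{nc}$ already centreless, so that $\Zc\Gamma\leq\Gamma_A$ and one may quotient and apply the extension machinery directly, with no passage to finite-index subgroups of $\Inn\Gamma$ (and, if such a passage were needed, \cref{outer automorphism and finite index subgroups} is the tool).
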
 

\Cref{autoutinn minkowski} can be deduced using the same arguments as in the proof of \cite[Thereom 1.9]{daura2024actions}. Thus, we can use \cref{main theorem9 intro} on locally homogeneous spaces $ H\setminus G/\Gamma$, where $H$ is a connected Lie group, $H$ is a maximal compact subgroup of $G$ and $\Gamma$ is a cocompact torsion-free lattice of $G$. 

We also compute the iterated discrete degree of symmetry for closed connected aspherical 3-manifolds. In the following theorem, $K$ denotes the Klein bottle and $SK$ denotes the total space of the non-trivial principal $S^1$-bundle over $K$.

\begin{thm}\label{main theorem10 intro}
	Let $M$ be a $3$-dimensional closed connected aspherical manifold. Then:
	\begin{itemize}
		\item[1.] $\D_2(M)=(3,0)$ if $M\cong T^3$.
		\item[2.] $\D_2(M)=(2,0)$ if $M\cong K\times S^1$ or $M\cong SK$.
		\item[3.] $\D_2(M)=(1,2)$ if $M$ is a Heisenberg manifold.
		\item[4.] $\D_2(M)=(1,0)$ if $\Zc\pi_1(M)\cong \Z$ and $\Inn \pi_1(M)$ is centreless.
		\item[5.] $\D_2(M)=(0,0)$ if $M$ does not belong to any of the above 4 cases.
	\end{itemize}
\end{thm}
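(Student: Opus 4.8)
The plan is to invoke the structural theory of closed connected aspherical $3$-manifolds together with \cref{main theorem9 intro} and the known calculations of the ordinary discrete degree of symmetry from \cite{daura2024actions}. First I would recall that every closed connected aspherical $3$-manifold is a $K(\pi,1)$ with $\pi=\pi_1(M)$ torsion-free, and by geometrization it carries one of the geometries $\mathbb{E}^3$, $\mathrm{Nil}$, $\widetilde{\mathrm{SL}_2}$, $\mathbb{H}^2\times\mathbb{R}$, $\mathrm{Sol}$, $\mathbb{H}^3$, or is a non-geometric manifold obtained by gluing along incompressible tori. In each of these cases the centre $\Zc\pi$ is finitely generated (indeed $\Zc\pi$ has rank $3$, $1$, $1$, $1$, $0$, $0$, $0$ in the geometric cases, with the rank-$1$ cases corresponding to Seifert fibred manifolds with non-zero Euler number or infinite $\pi_1$ of the base), and $\Inn\pi$ and $\Out(\Inn\pi)$ satisfy the Minkowski hypotheses of \cref{main theorem9 intro} either by \cref{autoutinn minkowski} (when $\pi$ is a lattice in a Lie group, which covers all geometric cases) or by a direct Mostow–Prasad/Waldhausen rigidity argument for the hyperbolic and mixed cases. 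So \cref{main theorem9 intro} applies whenever $\D_2(M)=(f,b)$ with $f+b=3$.

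Next I would analyze the five cases. For case 1, if $M\cong T^3$ then $\Zc\pi_1(M)\cong\Z^3$, so taking $f=3$ and the iterated actions $\{(\Z/p_i^{a_i})^3\}\acts T^3$ by translations gives $(3,0)\in\mu_2(M)$; since $\D_2(M)\leq(n,0)=(3,0)$ always (this is the $b=0$ bound, which follows from $\D(M)\le n$ and the classification of the equality case), equality holds. For case 2, both $K\times S^1$ and $SK$ are nilmanifolds? No — $K\times S^1$ is an infranilmanifold (flat, with holonomy $\Z/2$) and $SK$ carries $\mathrm{Nil}$ geometry with $\Out$ having a $\Z/2$; in both cases $\D(M)=2$ by the computation in \cite{daura2024actions}, and one exhibits explicit free iterated actions $\{(\Z/p_i^{a_i})^2\}$ realising $f=2$, $b=0$; since these manifolds are not tori, \cref{main theorem9 intro} (or the rigidity part of \cref{discsym rigidity}.(2)) forbids $\D_2(M)=(3,0)$, and one must also check $b=0$ by showing there is no extra rank-$p_i$ action commuting appropriately — this uses that the relevant quotients are again aspherical $3$-manifolds and a dimension/Euler-characteristic obstruction. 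For case 3, a Heisenberg manifold is $N/\Gamma$ with $N$ the $3$-dimensional Heisenberg group and $\rank\Zc\Gamma=1$; the fibration $S^1\to N/\Gamma\to T^2$ and the fact that $T^2$ admits $(\Z/p_i)^2$-actions lifting (after passing to the central $\Z/p_i^{a_i}$ in the fibre) gives $(1,2)\in\mu_2(M)$, and by the first displayed theorem in the excerpt (with $f=\rank\Zc\Gamma=1$, $n=3$) we get $\D_2(M)\le(1,2)$, hence equality. For case 4, $\Zc\pi_1(M)\cong\Z$ and $\Inn\pi_1(M)$ centreless: here $M$ is Seifert fibred over a hyperbolic $2$-orbifold with non-zero Euler number (the $\widetilde{\mathrm{SL}_2}$ or some $\mathrm{Nil}$/$\mathbb{H}^2\times\mathbb{R}$ cases), $\D(M)=1$ is known, one realises $f=1$ via the central circle, and one shows $\D_2(M)\not\ge(1,1)$ by noting that a free $(\Z/p_i)^1$-action on the base orbifold-manifold would violate the Minkowski property of $\Out$ of a hyperbolic surface group, or more directly that $\D_2$ of the base is $(0,0)$. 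Finally, case 5 is the complement: if $M$ is none of the above then either $\Zc\pi_1(M)=1$, in which case one shows no free iterated action of $p$-groups of positive rank can be ``efficient'' (any such would force a central element), giving $\D_2(M)=(0,0)$; or $\Zc\pi_1(M)\cong\Z$ but $\Inn\pi_1(M)$ has non-trivial centre and $M$ is not in cases 1–4, which by the classification cannot occur among aspherical $3$-manifolds — so this case is vacuous and only $\Zc\pi_1(M)=1$ remains.

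The main obstacle I anticipate is twofold: first, carefully verifying the upper bounds $\D_2(M)\le(f,b)$ in cases 2, 4 and 5, since \cref{main theorem9 intro} only gives the conclusion \emph{assuming} $f+b=n$, so to rule out, say, $(3,0)$ or $(1,1)$ one needs an independent argument — here I would lean on the rigidity statement (if $\D_2(M)=(3,0)$ then $M\cong T^3$ by \cref{main theorem9 intro}, contradiction) for the $f+b=3$ possibilities, and for intermediate possibilities like $(2,0)$ versus $(1,2)$ one must compare with the $2$-step nilmanifold $N/\Gamma$ with $\rank\Zc\Gamma$ equal to the relevant value and use that $M$ is not homeomorphic to it. Second, constructing the explicit free iterated actions realising the claimed lower bounds in cases 2, 3, 4 requires producing, for infinitely many primes $p_i$, honest free actions on covers and checking the efficiency condition $\rank_{ab}=f+b$; for nilmanifolds and infranilmanifolds this is classical (actions by the torus/lattice modulo congruence subgroups), but the efficiency condition — that one cannot simplify to a smaller total rank — needs a lower bound on $\mathrm{rank}$ coming from the cohomology or the rank of $\Zc\Gamma$, which is where \cite{daura2025IGG1}'s machinery and the structure of $\pi_1$ do the real work.
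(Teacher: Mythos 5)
Your overall strategy --- the classification of closed aspherical $3$-manifolds, lower bounds from explicit actions, upper bounds from \cref{main theorem9 intro} and Minkowski-type constraints --- matches the paper's, and your treatment of cases 1, 3 and 5 is essentially the paper's (cases 1 and 3 are imported from \cref{main theorem9 intro} and the nonzero-degree-map theorem of \cite{daura2025IGG1}; case 5 reduces to $\D(M)=\rank\Zc\pi_1(M)=0$ via Gabai and Casson--Jungreis). But in cases 2 and 4 you never actually prove that the second coordinate vanishes, and the devices you propose in its place do not work. In case 2 you invoke ``a dimension/Euler-characteristic obstruction'', which is empty here since $\chi=0$ for every closed $3$-manifold; moreover $SK$ is a flat manifold, not a $\mathrm{Nil}$-manifold, and the paper's argument for case 2 goes precisely through flatness. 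In case 4 you appeal to the Minkowski property of $\Out$ of a hyperbolic surface group acting on ``the base'', but the second step of the iterated action lives on the $3$-manifold $M/(\Z/p_i^{a_i})$, whose fundamental group $G_i$ varies with $i$, and the finite subgroups one must bound sit in $\Out(G_i)$, not in $\Out$ of the base orbifold group.

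The missing mechanism is \cref{discsym2 finiteness}: if $\Out(\pi_1(M/A))$ is Minkowski with a constant $C$ \emph{uniform over all free inner abelian actions} $A$, then for $p_i>C$ both steps of the iterated action are inner, \cref{inner actions simplifiable} simplifies them to a single abelian action, and this contradicts the efficiency condition $\rank_{ab}=f+b$ unless $b=0$. Establishing the uniform constant is where the content lies: for case 2 it comes from the finiteness of the set of $3$-dimensional Bieberbach groups (\cref{iterated discsym flat manifolds}); for case 4 it comes from Malfait's exact sequences for $\Out(G_i)$ together with the observation that, because $\Inn\pi_1(M)$ is centreless, $\overline{B}^1(\Inn\pi_1(M),\Z)$ is trivial and hence $\overline{H}^1(\Inn\pi_1(M),\Z)=H^1(\Inn\pi_1(M),\Z)$, which is independent of $i$ --- the paper explicitly warns that in general $\overline{H}^1$ does depend on the action, so this uniformity is not automatic. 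You flag in your last paragraph that verifying the efficiency condition is ``where the real work is'', but that work is exactly the proof of the theorem in cases 2 and 4, and it is absent from your proposal.
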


In \cref{main theorem9 intro} and \cref{main theorem10 intro}, we only consider free iterated actions of two finite groups. We can study, instead, free iterated actions on manifolds of many finite groups. To do so, we define the following invariant:

\begin{defn}\label{lenght def}
Given a collection of finite groups $\mathcal{G}=\{G_1,\dots,G_n\}$, we define $l(\mathcal{G})=n$. Given a free iterated action $\mathcal{G}\acts M$, the length of the iterated action is $$l(\mathcal{G}\acts M)=\min\{l(\mathcal{G}'):\mathcal{G}'\acts M\in [\mathcal{G}\acts M]\}.$$
The iterated length of a manifold $M$ is $$l(M)=\max\{l(\mathcal{G}\acts M): \text{ free iterated action }\mathcal{G}\acts M\}.$$
\end{defn}

With this notation, a free iterated action $\mathcal{G}\acts X$ is simplifiable if and only if $l(\mathcal{G}\acts X)=1$ and all free iterated actions on $X$ are simplifiable if and only if $l(X)=1$. The next result gives a bound for the iterated length of a nilmanifold.

\begin{thm}\label{main theorem8 intro}
	Let $N/\Gamma$ be a $c$-step nilmanifold. There exists a constant $C$ such that any free iterated action $\mathcal{G}\acts N/\Gamma$ is equivalent to a free iterated action $\mathcal{G}'\acts N/\Gamma$ where $\mathcal{G}'=\{A_1,...,A_c,G'\}$, $A_i$ are finite abelian groups such that $\rank(A_i)\leq b_i$ and $|G'|\leq C$. In particular, $l(N/\Gamma)\leq c+1$.
\end{thm}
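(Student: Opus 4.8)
The plan is to induct on the nilpotency step $c$, using the principal $S^1$-bundle structure of nilmanifolds together with the rigidity of covering maps of aspherical manifolds. A $c$-step nilmanifold $N/\Gamma$ fits in a principal $S^1$-bundle $S^1 \to N/\Gamma \xrightarrow{\pi} N'/\Gamma'$ where $N'/\Gamma'$ is a $(c-1)$-step nilmanifold (the $S^1$-factor coming from a central $\Z$ in $\Gamma$, with $N' = N/Z(N)^{\circ}$ roughly speaking); the first Betti number $b_1$ of the fiber direction is what controls $\rank(A_1)$. Given an arbitrary free iterated action $\mathcal{G} \acts N/\Gamma$, I would first pass to the induced action on fundamental groups: each orbit map $p_i$ is a regular covering, so $\pi_1(N/\Gamma)$ sits as a normal subgroup of finite index in the tower of groups $\Gamma = \Pi_0 \triangleleft \Pi_1 \triangleleft \cdots$ built from the $\Pi_{i-1} \rtimes G_i$ construction, and every $\Pi_i$ is again the fundamental group of a closed aspherical manifold finitely covered by $N/\Gamma$, hence itself a virtually nilpotent group; by the characterization of (infra)nilmanifolds, the corresponding manifolds $M_i$ are infranilmanifolds.

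The key structural step is to analyze what a finite group action on an infranilmanifold $M_i$ can do: using the center, I would show that the action, after reorganizing (equivalence of iterated actions), factors through an action on the base $N'/\Gamma'$-type quotient composed with an action along the $S^1$-fibers. Concretely, the center $\Zc\Pi_i$ is preserved by any group of (outer) automorphisms, so there is a canonical "abelian part" on which the finite groups act by automorphisms of a finitely generated abelian group; since $\Aut$ of such a group is $\Gl(m,\Z)$ up to finite data, which is Minkowski (Minkowski's theorem on $\Gl(m,\Z)$), the finite abelian subgroups acting effectively on this part have rank bounded by $m$, and one extracts the first abelian group $A_1$ in the simplified tower with $\rank(A_1) \le b_1$. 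The remaining action is an iterated action on the lower-step quotient, to which the induction hypothesis applies, yielding $A_2,\dots,A_c$ with the rank bounds $b_i$.

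To close the induction at the bottom, when $c = 0$ we have a torus-free point or a flat manifold handled by the Minkowski property directly; the leftover finite group $G'$ absorbing everything that is not "rank-controlled" has order bounded by a constant $C$ depending only on $N/\Gamma$, because it ultimately injects into a fixed arithmetic datum (a product of $\Gl$ over $\Z$ of bounded size, or the outer automorphism group of a fixed lattice, which is Minkowski by \cref{autoutinn minkowski}). Finally, combining a simplified tower $\{A_1,\dots,A_c,G'\}$ gives the length bound $l(N/\Gamma) \le c+1$ since any further composition with $G'$ at the end cannot be simplified away in general but the whole tower has length $c+1$. The main obstacle I anticipate is the bookkeeping in the reorganization step: showing that an \emph{arbitrary} iterated action — whose groups $G_i$ need not be abelian and whose order of appearance is unconstrained — can be rearranged up to equivalence into the canonical form "$c$ abelian layers matching the central series, then one bounded group," which requires carefully tracking how subgroups of the tower $\Pi_i$ interact with the lower central series of $\Gamma$ and invoking the rigidity of regular coverings of aspherical manifolds to realize the algebraic rearrangement geometrically.
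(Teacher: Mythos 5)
Your proposal circles the right ingredients (asphericity, infranilmanifold structure of the quotients, Minkowski for $\Gl(m,\Z)$), but the step you yourself flag as "the main obstacle" --- rearranging an arbitrary tower into the canonical form of $c$ abelian layers followed by one bounded group --- is precisely the content of the theorem, and you do not supply an argument for it. The paper resolves this not by inducting on the $S^1$-bundle structure of the iterated action itself, but by forgetting the tower entirely: up to equivalence, a free iterated action is just the covering $N/\Gamma\to M=(N/\Gamma)/\mathcal{G}$, i.e.\ the inclusion $\Gamma\le E=\pi_1(M)$, and \emph{any} subnormal series from $\Gamma$ to $E$ produces an equivalent iterated action. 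The two facts that make the canonical series exist are missing from your sketch. First, one must show $\Gamma\le\Fitt(E)$ (\cref{itertaed actions on nilmanifolds and Fitt}); this is not automatic from "$M$ is an infranilmanifold" and is proved by comparing, in $\Out(\Gamma_\Q)$, the abstract kernel of the nilpotent extension $\Fitt(E)\cap\Gamma\le\Gamma$ (trivial, by \cref{infranilmanifolds malcev completion trivial}) with that of the essential extension $\Fitt(E)\le E$ (injective, by \cref{infranilmanifolds malcev completion injective}); without this, your "abelian part along the center" need not line up with $\Gamma$ at all. Second, one needs the purely lattice-theoretic statement (\cref{free iterated actions nilmanifolds quotient nilmanifold}) that between any two nested lattices $\Gamma\le\Lambda$ of $N$ there is a subnormal series with abelian quotients of ranks $\le b_i$; this is where the induction on $c$ via the center of $N$ actually lives, applied to lattices rather than to the group actions. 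The bound on $|G'|$ then comes from $E/\Fitt(E)$ injecting into $\Out(\Fitt(E)_\Q)\cong\Out(\Gamma_\Q)$, whose finite subgroups are bounded (\cref{out rational malcev completion}) --- close to what you suggest, but note the relevant target is $\Out$ of the rational Mal'cev completion of the fixed lattice $\Gamma$, not "$\Out$ of a fixed lattice" as in \cref{autoutinn minkowski}. A smaller point: your groups $\Pi_i$ are extensions $1\to\Pi_{i-1}\to\Pi_i\to G_i\to 1$ that need not split, so the notation $\Pi_{i-1}\rtimes G_i$ is not justified. As written, the proposal is a plausible outline with the decisive steps unproved.
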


Given a closed connected manifold $M$, it is an interesting question to study when $l(M)$ is bounded. If no bound exists then we will write $l(M)=\infty$. We have seen that $l(M)$ is bounded if $M$ is a nilmanifold. Another case where the iterated length is bounded is the following:

\begin{prop}\label{main theorem5 intro}
Given a locally symmetric space $H\setminus G/\Gamma$, there exists $C$ depending on $\Gamma$ such that $l(M)\leq C$.
\end{prop}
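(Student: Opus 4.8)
The plan is to reduce the statement to a purely group-theoretic assertion about the fundamental group $\Gamma$ (a cocompact torsion-free lattice of $G$) and then invoke the structure theory already used in the paper. A free iterated action $\mathcal{H} \acts M$ of a collection $\mathcal{H} = \{H_1, \dots, H_k\}$ on the aspherical manifold $M = H\backslash G/\Gamma$ produces, via the composed orbit map $p = p_k \circ \cdots \circ p_1$, a regular covering $M \to M_k$ whose deck group $Q$ fits into a tower of normal subgroups; equivalently $\pi_1(M_k)$ is a group $\Lambda$ containing $\Gamma$ as a normal subgroup of finite index, with $\Lambda/\Gamma$ admitting a subnormal series whose successive quotients are the $H_i$. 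Since $M$ is aspherical, $M_k$ is also aspherical (the tower consists of free actions, so each $X_i$ is aspherical and $\pi_1(X_i) = \pi_1(X_{i-1}) \rtimes$-extended-by-$H_i$ in the appropriate sense), and the equivalence class $[\mathcal{H} \acts M]$ is determined by the covering $M \to M_k$, hence by the overgroup $\Gamma \trianglelefteq \Lambda$ together with the filtration. Thus $l(M)$ is bounded by the supremum, over all finite-index torsion-free overgroups $\Lambda$ of $\Gamma$ that normalize it and over all subnormal series of $\Lambda/\Gamma$, of the minimal length of such a series — after allowing the ``simplification'' moves that collapse consecutive abelian-by-abelian or small pieces in the spirit of \cref{main theorem8 intro}.

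The key point is that finite-index torsion-free overgroups $\Lambda$ of $\Gamma$ that normalize $\Gamma$ are themselves cocompact lattices in $G$ (or in a Lie group commensurable with $G$): indeed $N_\Lambda(\Gamma)/\Gamma$ is finite, and by Mostow–Prasad-type rigidity, or more elementarily because $\Lambda$ acts on the contractible $G/H$ by the same properly discontinuous cocompact action extending that of $\Gamma$, the overgroup $\Lambda$ sits inside the commensurator of $\Gamma$, which for irreducible lattices is again a lattice (Margulis) or, in the general case, at least a group all of whose finite subgroups have bounded order. The decisive structural input is therefore: \emph{the collection of finite groups that can occur as subquotients in an iterated action of $M$ is controlled by $\Out(\Gamma)$ together with the finite subgroups appearing there}. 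Since $\Gamma$ is a lattice in a connected Lie group, \cref{autoutinn minkowski} applies: $\Aut(\Inn\Gamma)$ and $\Out(\Inn\Gamma)$ are Minkowski. Combined with the fact that $\Zc\Gamma$ is finitely generated (true for lattices), this bounds the ``torsion part'' of the tower by a constant $C_1 = C_1(\Gamma)$, exactly as in the proof of \cref{main theorem8 intro}; the ``free abelian part'' of the tower has length bounded by the nilpotency-type data of $\Zc\Gamma$, which for a lattice is bounded by $\dim G$.

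Concretely the steps are: (1) translate $\mathcal{H} \acts M$ into the overgroup-plus-filtration data $(\Gamma \trianglelefteq \Lambda, \text{series})$ and check the equivalence relation matches ``isomorphism of coverings''; (2) show $\Lambda$ is again a lattice in a Lie group commensurable to $G$, so that \cref{autoutinn minkowski} and finite generation of the centre persist uniformly for all such $\Lambda$ (this uses that $\Gamma$ has only finitely many conjugacy classes of finite-index overgroups normalizing it up to commensurability, or a direct commensurator argument); (3) run the simplification procedure of \cref{main theorem8 intro}: peel off an abelian piece coming from $\Zc\Lambda$, pass to $\Inn\Lambda$, use that its automorphism groups are Minkowski to bound the residual finite data, and iterate — the iteration terminates in at most $\dim G + \text{(Minkowski constant bookkeeping)}$ steps; (4) conclude $l(\mathcal{H} \acts M) \le C(\Gamma)$ uniformly, hence $l(M) \le C(\Gamma)$. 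The main obstacle I anticipate is step (2): controlling \emph{all} torsion-free finite-index overgroups $\Lambda \trianglerighteq \Gamma$ simultaneously — one must rule out an infinite ascending tower of such overgroups producing ever-longer irreducible filtrations, which is where one genuinely needs the commensurator to be a lattice (Margulis) in the higher-rank case and an ad hoc argument (or finiteness of the outer automorphism group acting on the relevant lattice) in rank one; packaging this uniformly in $\Gamma$, rather than in each $\Lambda$, is the delicate part, and is presumably why the statement only claims a bound $C$ depending on $\Gamma$ rather than on $\dim G$ alone.
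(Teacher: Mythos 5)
Your reduction of the problem to a chain of overgroups $\Gamma=\Lambda_0\trianglelefteq\Lambda_1\trianglelefteq\cdots\trianglelefteq\Lambda_l$ with $\prod_i|G_i|=[\Lambda_l:\Gamma]$ is the right starting point, but the argument you build on top of it has a genuine gap, and it is exactly the one you flag yourself in step (2). The Minkowski property of $\Out(\Gamma)$ (or of $\Aut(\Inn\Gamma)$, $\Out(\Inn\Gamma)$ via \cref{autoutinn minkowski}) controls only the first quotient $\Lambda_1/\Lambda_0$: since the inclusions are merely subnormal, $\Lambda_l/\Gamma$ is not a group and does not embed in $\Out(\Gamma)$, so to iterate you would need Minkowski constants for $\Out(\Lambda_i)$ that are uniform over all the intermediate overgroups $\Lambda_i$ arising from all possible towers. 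Your proposed mechanism for this uniformity --- that the overgroups live in the commensurator of $\Gamma$, ``which for irreducible lattices is again a lattice (Margulis)'' --- is false in precisely the hardest case: by Margulis's theorem the commensurator is \emph{dense} in $G$ when $\Gamma$ is arithmetic, and it is only for non-arithmetic lattices that it contains $\Gamma$ with finite index. So nothing in your argument rules out an infinite ascending tower with ever-growing total index, which is the whole content of the proposition. (The appeal to the simplification machinery of \cref{main theorem8 intro} also does not transport: for $G$ semisimple without compact factors the centre of $\Gamma$ is finite, so there is no ``free abelian part'' to peel off, and the nilpotency-class induction has no analogue here.)

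The paper closes this gap with a completely different and much shorter argument: by the Kazhdan--Margulis theorem (\cite[11.9 Corollary]{raghunathan2012discrete}) there is a constant $A>0$, depending only on $G$, such that every lattice of $G$ has covolume at least $A$. Since $(K\setminus G/\Gamma)/\mathcal{G}\cong K\setminus G/\Gamma'$ for a lattice $\Gamma'\supseteq\Gamma$ and covolume is multiplicative in the index, $2^{\,l}\leq\prod_i|G_i|=[\Gamma':\Gamma]=\vol(G/\Gamma)/\vol(G/\Gamma')\leq \vol(G/\Gamma)/A$, whence $l\leq\log_2(\vol(G/\Gamma)/A)$. This bounds the total index of every overgroup at once, with no need to control automorphism groups of the intermediate lattices, and it explains why the constant depends on $\Gamma$ only through its covolume. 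If you want to salvage your approach, the missing ingredient is precisely such a uniform lower bound on covolumes (or, equivalently, an a priori bound on $[\Lambda:\Gamma]$ over all lattices $\Lambda\supseteq\Gamma$); without it the Minkowski bookkeeping cannot terminate.
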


However, not all closed connected aspherical manifolds have bounded iterated length, as the next result shows. 

\begin{thm}\label{main theorem6 intro}
	\begin{itemize}
		\item[1.] There exists a closed solvmanifold $M$ such that $l(M)=\infty$.
		\item[2.] There exist a closed connected aspherical locally homogeneous space $H\setminus G/\Gamma$ such that the solvable radical of $G$ is abelian and $l(H\setminus G/\Gamma)=\infty$.
	\end{itemize}
\end{thm}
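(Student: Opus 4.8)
The plan is first to translate the statement into a purely group-theoretic one and then, in both settings, to build an infinite tower of finite-index overgroups of $\pi_1(M)$ in which $\pi_1(M)$ has unbounded subnormal defect. Since $M$ is aspherical, a free iterated action $\mathcal{G}\acts M$ with final quotient $M_n$ is determined, up to equivalence, by the finite covering $M\to M_n$, and a representative of minimal length amounts to a factorisation of this covering as a tower of \emph{regular} finite coverings $M=X_0\to X_1\to\cdots\to X_k=M_n$; on fundamental groups this is exactly a chain $\pi_1(M)=\Lambda_0\trianglelefteq\Lambda_1\trianglelefteq\cdots\trianglelefteq\Lambda_k=\pi_1(M_n)$ with finite successive quotients. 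Conversely any such chain is realised by a tower of finite covers of $M_n$, which are again closed aspherical manifolds since each $\Lambda_i$ is torsion-free of virtually polycyclic (respectively, lattice) type. Hence $l(\mathcal{G}\acts M)$ is the subnormal defect of $\pi_1(M)$ in $\pi_1(M_n)$, and it suffices to exhibit, for every $n$, a torsion-free finite-index overgroup of $\pi_1(M)$ of the appropriate type in which $\pi_1(M)$ is subnormal with defect $\geq n$.

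For (1) I would fix a prime $p$ and a hyperbolic matrix $A\in\Sl(2,\Z)$ with $A\equiv I\pmod p$ and $B:=(A-I)/p\in\Gl(2,\Z)$; for $p=2$ one may take $\det A=1$ and $\operatorname{tr} A=6$ (e.g.\ $A=\left(\begin{smallmatrix}3&2\\4&3\end{smallmatrix}\right)$). Let $M$ be the mapping torus of the induced Anosov diffeomorphism of $T^2$, a closed $3$-dimensional solvmanifold with $\pi_1(M)=\Z^2\rtimes_A\Z=:L$. Since $A$ has integer entries, $\Lambda^{(i)}:=\tfrac1{p^i}\Z^2\rtimes_A\Z$ is an overgroup of $L=\Lambda^{(0)}$, abstractly isomorphic to $L$, with $\Lambda^{(i-1)}\trianglelefteq\Lambda^{(i)}$ of quotient $(\Z/p)^2$; the tower $\Lambda^{(0)}\trianglelefteq\cdots\trianglelefteq\Lambda^{(n)}$ therefore yields a free iterated action of $n$ copies of $(\Z/p)^2$ on $M$ (every intermediate quotient manifold is again homeomorphic to $M$). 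The key step is to prove incompressibility, which reduces to showing that the normal closure of $\Lambda^{(0)}$ in $\Lambda^{(n)}$ equals $\Lambda^{(n-1)}$: the inclusion ``$\subseteq$'' follows from $I-A^t\in pM_2(\Z)$ for every $t$, so that conjugates of $\Lambda^{(0)}$ remain inside $\Lambda^{(n-1)}$; for ``$\supseteq$'', if $v\in\tfrac1{p^n}\Z^2$ and $(w,1)\in\Lambda^{(0)}$ then $[(v,0),(w,1)]=((I-A)v,0)=(-pBv,0)$, and since $B\in\Gl(2,\Z)$ these elements already contribute all of $\tfrac1{p^{n-1}}\Z^2$. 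Iterating, the successive normal closures of $\Lambda^{(0)}$ in $\Lambda^{(n)}$ are $\Lambda^{(n-1)},\Lambda^{(n-2)},\dots,\Lambda^{(0)}$, so the defect of $\pi_1(M)$ in $\pi_1(M_n)=\Lambda^{(n)}$ is exactly $n$, whence $l(M)=\infty$.

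For (2) I would run the same construction with the free cyclic factor replaced by a semisimple arithmetic lattice. Choose a semisimple Lie group $L$ with a torsion-free cocompact lattice $\Gamma_0$ acting $\Q$-rationally on $\R^N$ and preserving a lattice $\Z^N$ — for instance $L=\Sl_1(D)(\R)\cong\Sl(2,\R)$ for a rational division quaternion algebra $D$ split at infinity, acting on $D\otimes\R\cong\R^4$ by left multiplication and preserving an order $\cong\Z^4$ — and, after replacing $\Gamma_0$ by a level-$p$ congruence subgroup (still torsion-free and cocompact for suitable $p$), assume $\Gamma_0$ acts trivially on $\Z^N$ modulo $p$. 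Put $G=\R^N\rtimes L$, whose solvable radical is the abelian group $\R^N$, let $H\leq L$ be a maximal compact subgroup and $\Gamma=\Z^N\rtimes\Gamma_0$; then $M=H\backslash G/\Gamma$ is a closed aspherical locally homogeneous space with $\pi_1(M)=\Gamma$. The tower $\Lambda^{(i)}=\tfrac1{p^i}\Z^N\rtimes\Gamma_0$ again gives free iterated actions of every length, and the same computation shows the normal closure of $\Lambda^{(0)}$ in $\Lambda^{(n)}$ is $\Lambda^{(n-1)}$; the one new ingredient is the equality $\sum_{\gamma\in\Gamma_0}(I-\rho(\gamma))\Z^N=p\Z^N$, which I would obtain from strong approximation (the map $\gamma\mapsto\tfrac1p(\rho(\gamma)-I)\bmod p$ surjects onto the Lie algebra $\mathfrak{l}\subset M_N(\mathbb{F}_p)$) together with $\mathfrak{l}\cdot(\Z^N/p)=\Z^N/p$ for this representation (for $\Z^N/p\cong M_2(\mathbb{F}_p)$ with $\mathfrak{l}=\mathfrak{sl}_2$ acting by left multiplication this is immediate). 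Hence again the defect of $\pi_1(M)$ in $\pi_1(M_n)$ is $n$ and $l(M)=\infty$.

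The main obstacle in both parts is the incompressibility step: controlling the normal closures precisely enough to force the subnormal defect to grow like $n$, rather than staying bounded. This is exactly where the arithmetic hypotheses are used — $B\in\Gl(2,\Z)$ in (1), and strong approximation together with the choice of representation in (2). A secondary, essentially routine point is realisability: every competing shorter factorisation must still pass through closed aspherical manifolds finitely covered by $M$, which holds because all groups involved are torsion-free of virtually polycyclic type (respectively, cocompact lattices in $G$), hence fundamental groups of the relevant aspherical manifolds.
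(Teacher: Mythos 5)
Your proposal is correct. The reduction of $l(\mathcal{G}\acts M)$ to the subnormal defect of $\pi_1(M)$ in $\pi_1(M_n)$ is exactly the right framework, and your part (1) is essentially the paper's proof: the paper also works with a $\operatorname{Sol}^3$-manifold, taking $A=\left(\begin{smallmatrix}5&2\\2&1\end{smallmatrix}\right)$ and the tower $\Gamma_k=2^k\Z^2\rtimes_\phi\Z$, and obtains the lower bound by computing the normalizer $N_\Gamma(\Gamma_k)=\Gamma_{k-1}$, which forces every quotient in any equivalent chain to have order at most $4$; your computation of iterated normal closures (the normal closure of $\Lambda^{(0)}$ in $\Lambda^{(n)}$ being $\Lambda^{(n-1)}$, using $I-A^t\in pM_2(\Z)$ and $B=(A-I)/p\in\Gl(2,\Z)$) is the dual of that normalizer computation and gives the same defect-$n$ conclusion. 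For part (2) your route is genuinely different. The paper bootstraps from part (1): it keeps the same matrix $A$ and replaces the $\Z$-factor by the fundamental group $\Lambda$ of a closed hyperbolic manifold admitting an epimorphism $f:\Lambda\to\Z$, so the action on $\Z^2$ still factors through $\Z$ and the part (1) computation applies verbatim; the groups $2^k\Z^2\rtimes_{\phi\circ f}\Lambda$ are realized as fundamental groups of locally homogeneous spaces via the Seifert construction, and the manifolds are identified using the Borel conjecture for lattices. You instead build a fresh arithmetic example $\Z^N\rtimes\Gamma_0$ with $\Gamma_0$ a cocompact lattice acting irreducibly, which buys independence from the Seifert construction and the Borel conjecture but costs you strong approximation and congruence-subgroup bookkeeping. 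One small imprecision there: what strong approximation plus $\mathfrak{l}\cdot(\Z^N/p)=\Z^N/p$ actually gives is that $\sum_\gamma\tfrac1p(\rho(\gamma)-I)\Z^N$ surjects onto $\Z^N/p$, i.e.\ has index prime to $p$ in $\Z^N$, not that it equals $\Z^N$; this still suffices, since the normal closure also contains $\Z^N$ and a sublattice of $p$-prime index together with $p^{n-1}\Z^N$ generates $\Z^N$, but you should state it that way.
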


This paper is divided as follows. In the first section we recall some preliminary group theory results which will be widely used to prove the main results of this paper. The second section is devoted to prove \cref{main theorem9 intro}. In section three we prove \cref{main theorem10 intro}. Finally, in the forth section we study the length of free iterated actions on aspherical manifolds, proving \cref{main theorem5 intro}, \cref{main theorem6 intro} and \cref{main theorem8 intro}.

\textbf{Acknowledgements:} I would like to thank Ignasi Mundet i Riera for all the guidance given during this project and for the careful revision of the first draft of this paper. Many thanks to Leopold Zoller for helpful
comments and corrections. 

This work was partially supported by the grant PID2019-104047GB-I00 from the Spanish Ministry of Science and Innovation and the Departament de Recerca i Universitats de la Generalitat de Catalunya (2021 SGR 00697).

\section{Preliminaries}

In this section we recall some results which will be widely used across the article.

\subsection{Outer automorphism group}\label{sec:preliminares}

The aim of this section is to briefly explain the constructions in \cite{MalfaitWim2002Toag} used to compute the outer automorphism group of a group extension. 

Let
\[\begin{tikzcd}
	1\ar{r}{}&K\ar{r}{}&G\ar{r}{p}&Q\ar{r}{}&1
\end{tikzcd}\]
be a short exact sequence of groups. The extension is determined by the morphism $\psi:Q\longrightarrow\Out(K)$ and a 2-cocycle $c\in H^2_\psi(Q,\Zc K)$. Let $\Aut(G,K)=\{f\in\Aut(G):f(K)=K\}$ and $\Out(G,K)=\Aut(G,K)/\Inn G$. Define the group morphism $\Theta:\Aut(G,K)\longrightarrow \Aut(K)\times \Aut(Q)$ such that $f\mapsto(f_{|K},\overline{f})$, where $\overline{f}:Q\longrightarrow Q$ is the map induced by $f$ on $Q$. Finally, recall that if $H$ is a subgroup of $G$, the centralizer $C_G(H)$ is $\{g\in G:c_h(g)=g\text{ for all }h\in H\}$.

\begin{thm}\cite[Theorem 3.3, Theorem 3.6]{MalfaitWim2002Toag}
	With the above notation, we have
	\begin{itemize}
		\item[1.] $\im(\Theta)$ are the elements of $\Aut(K)\times\Aut(Q)$ which fix the 2-cocycle $c$.
		\item[2.] There exists an isomorphism $\xi:\ker(\Theta)\longrightarrow Z^1_\psi(Q,\Zc K)$.
	\end{itemize}
\end{thm}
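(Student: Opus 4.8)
The plan is to fix a set-theoretic section of $p$ and reduce everything to a computation with the factor set. Choose a normalized section $s\colon Q\to G$ (so $p\circ s=\mathrm{id}_Q$ and $s(1)=1$), and for $q\in Q$ let $\varphi_q\in\Aut(K)$ be conjugation $k\mapsto s(q)\,k\,s(q)^{-1}$; then $\varphi_q$ represents $\psi(q)\in\Out(K)$, and $c$ is the factor set $c(q_1,q_2)=s(q_1)s(q_2)s(q_1q_2)^{-1}\in K$. Since every element of $G$ is uniquely of the form $k\,s(q)$, an automorphism $f\in\Aut(G,K)$ is completely encoded by the triple $(\alpha,\beta,\lambda)$ with $\alpha=f|_K\in\Aut(K)$, $\beta=\overline f\in\Aut(Q)$ and $\lambda\colon Q\to K$ defined by $f(s(q))=\lambda(q)\,s(\beta(q))$. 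Conversely, such a triple defines via $f(k\,s(q))=\alpha(k)\,\lambda(q)\,s(\beta(q))$ a homomorphism $G\to G$ — automatically bijective, by the five lemma applied to the extension, and automatically in $\Aut(G,K)$ — precisely when applying the prospective $f$ to the two defining relations $s(q)ks(q)^{-1}=\varphi_q(k)$ and $s(q_1)s(q_2)=c(q_1,q_2)s(q_1q_2)$ is consistent, i.e. when
\begin{itemize}
	\item[(i)] $c_{\lambda(q)}\circ\varphi_{\beta(q)}\circ\alpha=\alpha\circ\varphi_q$ for all $q\in Q$, and
	\item[(ii)] $\lambda(q_1)\,\varphi_{\beta(q_1)}(\lambda(q_2))\,c(\beta(q_1),\beta(q_2))=\alpha(c(q_1,q_2))\,\lambda(q_1q_2)$ for all $q_1,q_2\in Q$.
\end{itemize}
The first step is to establish this dictionary carefully, as it is the technical backbone of both parts.

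For part (1): by construction, $(\alpha,\beta)\in\im(\Theta)$ iff there exists some $\lambda$ satisfying (i) and (ii). I would argue that (i) is solvable in $\lambda$ iff $\psi(\beta(q))=[\alpha]\,\psi(q)\,[\alpha]^{-1}$ in $\Out(K)$ for all $q$ (the two sides of (i) represent the same class of $\Out(K)$, hence differ by an inner automorphism), and that in that case (i) pins down each $\lambda(q)$ up to a factor in $\Zc K$. Substituting the resulting $\Zc K$-parametrized family into (ii), the residual equation asserts that a certain explicit element of $Z^2_\psi(Q,\Zc K)$ — which unwinds to the discrepancy between the pushforward $\alpha_*c$ and the pullback $\beta^*c$ (both legitimate factor sets once the displayed $\Out(K)$-compatibility holds) — is a coboundary, i.e. $[\alpha_*c]=[\beta^*c]$ in $H^2_\psi(Q,\Zc K)$. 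Phrased via the natural action of $\Aut(K)\times\Aut(Q)$ on factor sets up to equivalence, this is exactly the statement that $(\alpha,\beta)$ fixes $c$, so $\im(\Theta)=\Stab(c)$.

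For part (2): $f\in\ker(\Theta)$ means $\alpha=\mathrm{id}_K$ and $\beta=\mathrm{id}_Q$, so $f$ is encoded by $\lambda$ alone, with $f(k\,s(q))=k\,\lambda(q)\,s(q)$. Then (i) degenerates to $c_{\lambda(q)}=\mathrm{id}_K$, i.e. $\lambda(q)\in\Zc K$; and since $\varphi_q$ acts on $\Zc K$ through $\psi(q)$ and $\Zc K$ is abelian, the $c$-terms in (ii) cancel and it becomes, written additively, $\lambda(q_1)+\psi(q_1)\cdot\lambda(q_2)=\lambda(q_1q_2)$ — that is, $\lambda\in Z^1_\psi(Q,\Zc K)$; conversely every $1$-cocycle $\lambda$ yields such an $f$ (with inverse corresponding to $-\lambda$, which is again a cocycle). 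Finally, if $f_1,f_2\in\ker(\Theta)$ correspond to $\lambda_1,\lambda_2$, then $(f_1\circ f_2)(s(q))=f_1(\lambda_2(q)s(q))=\lambda_2(q)\lambda_1(q)s(q)$ because $f_1|_K=\mathrm{id}$, so $f\mapsto\lambda$ is a group isomorphism $\xi\colon\ker(\Theta)\xrightarrow{\ \sim\ }Z^1_\psi(Q,\Zc K)$.

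The main obstacle is the cohomological bookkeeping in part (1): one is forced to work with non-abelian cohomology, the objects $\varphi_q$ and $c$ are canonical only up to the indeterminacy noted above, and one must verify that the condition "$(\alpha,\beta)$ fixes $c$" is independent of these choices and genuinely coincides with the stabilizer description. Part (2), by contrast, is a routine specialization of the dictionary together with the easy check that $\xi$ is multiplicative.
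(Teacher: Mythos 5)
The paper gives no proof of this statement (it is quoted from Malfait's article), and your reconstruction is correct and is essentially the standard Wells--Malfait argument from the cited source: encode $f\in\Aut(G,K)$ by the triple $(\alpha,\beta,\lambda)$ via a normalized section, derive the two compatibility relations, read off the stabilizer condition on $[c]$ for part (1) and the $1$-cocycle identity for part (2). Both the dictionary and the verification that $\xi$ is a group isomorphism check out.
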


\begin{defn}\cite[Definition 3.7]{MalfaitWim2002Toag}
	We define
	$$\overline{B}_\psi^1(Q,\Zc K)=\{\xi(c_g):g\in C_G(K),p(g)\in\Zc Q\}. $$
\end{defn} 

\begin{prop}\cite[Proposition 3.8]{MalfaitWim2002Toag}
	We have $$B_\psi^1(Q,\Zc K)\leq\overline{B}_\psi^1(Q,\Zc K)\leq Z^1_\psi(Q,\Zc K).$$ In consequence, there exists a surjective morphism $$H^1_\psi(Q,\Zc K)\longrightarrow \overline{H}_\psi^1(Q,\Zc K)=Z^1_\psi(Q,\Zc K)/\overline{B}_\psi^1(Q,\Zc K).$$ 
	
	We have an isomorphism $\overline{B}_\psi^1(Q,\Zc K)\cong (p^{-1}(\Zc Q)\cap C_GK)/\Zc G$.
\end{prop}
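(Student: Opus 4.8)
The plan is to verify the two inclusions $B_\psi^1(Q,\Zc K)\leq\overline{B}_\psi^1(Q,\Zc K)\leq Z^1_\psi(Q,\Zc K)$ directly from the definition $\overline{B}_\psi^1(Q,\Zc K)=\{\xi(c_g):g\in C_G(K),\ p(g)\in\Zc Q\}$, using the isomorphism $\xi:\ker(\Theta)\longrightarrow Z^1_\psi(Q,\Zc K)$ of the preceding theorem. First I would check that $\overline{B}_\psi^1$ is a well-defined subgroup: if $g\in C_G(K)$ then conjugation $c_g$ fixes $K$ pointwise, so in particular $c_g(K)=K$ and $c_g\in\Aut(G,K)$; moreover $(c_g)_{|K}=\mathrm{id}_K$, and the hypothesis $p(g)\in\Zc Q$ forces $\overline{c_g}=c_{p(g)}=\mathrm{id}_Q$, so $c_g\in\ker(\Theta)$ and $\xi(c_g)$ makes sense and lands in $Z^1_\psi(Q,\Zc K)$. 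This already gives the right-hand inclusion $\overline{B}_\psi^1\leq Z^1_\psi$, and the subgroup property follows since $g\mapsto c_g$ is a homomorphism and $\{g\in C_G(K):p(g)\in\Zc Q\}=p^{-1}(\Zc Q)\cap C_G(K)$ is a subgroup of $G$.

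For the left-hand inclusion $B_\psi^1(Q,\Zc K)\leq\overline{B}_\psi^1(Q,\Zc K)$, I would recall that a principal $1$-cochain in $B_\psi^1$ has the form $q\mapsto \psi(q)(z)\cdot z^{-1}$ (i.e. $q\mapsto {}^{q}z - z$ additively) for some $z\in\Zc K$; the task is to realize each such coboundary as $\xi(c_g)$ for a suitable $g$. The natural candidate is $g=z$ itself, viewed inside $G$ via $K\leq G$: since $z\in\Zc K$ it centralizes $K$, so $z\in C_G(K)$, and $p(z)=1\in\Zc Q$, so $z$ lies in the defining set for $\overline{B}_\psi^1$. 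It then remains to trace through the explicit formula for $\xi$ from \cite{MalfaitWim2002Toag} and confirm that $\xi(c_z)$ is exactly the coboundary associated to $z$; this is a bookkeeping check once the formula for $\xi$ on $\ker\Theta$ is written out. Once both inclusions are established, the surjection $H^1_\psi(Q,\Zc K)\twoheadrightarrow\overline{H}^1_\psi(Q,\Zc K)$ is immediate: it is the map on quotients induced by $\mathrm{id}$ on $Z^1_\psi$, well-defined because $B_\psi^1\leq\overline{B}_\psi^1$, and surjective because the same numerator is used.

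For the final isomorphism $\overline{B}_\psi^1(Q,\Zc K)\cong(p^{-1}(\Zc Q)\cap C_GK)/\Zc G$, the plan is to consider the composite homomorphism $p^{-1}(\Zc Q)\cap C_G(K)\longrightarrow\ker(\Theta)\xrightarrow{\ \xi\ }Z^1_\psi(Q,\Zc K)$ sending $g\mapsto\xi(c_g)$. Its image is $\overline{B}_\psi^1$ by definition, so by the first isomorphism theorem it suffices to identify the kernel with $\Zc G$. Now $\xi$ is injective, so $\xi(c_g)=0$ iff $c_g=\mathrm{id}_G$ iff $g\in\Zc G$; and conversely $\Zc G\subseteq C_G(K)$ trivially and $p(\Zc G)\subseteq\Zc Q$ since $p$ is surjective, so $\Zc G$ is contained in the source. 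Hence the kernel is exactly $\Zc G$ and the isomorphism follows. The only genuinely non-formal point — the \emph{main obstacle} — is the explicit verification that $\xi$ carries $c_z$ (for $z\in\Zc K$) to the standard coboundary of $z$, i.e. pinning down the normalization of $\xi$ from \cite{MalfaitWim2002Toag}; everything else is diagram-chasing with the first isomorphism theorem and the already-cited structure theorem.
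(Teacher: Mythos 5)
Your proposal is correct, and the one step you flag as the ``main obstacle'' (that $\xi(c_z)$ for $z\in\Zc K$ is the coboundary of $z$) is indeed routine once one writes $\xi(f)(q)=f(s(q))s(q)^{-1}$ for a section $s$ of $p$: this gives $\xi(c_z)(q)=z\,\psi(q)(z)^{-1}$, which is a principal cocycle, and these exhaust $B^1_\psi(Q,\Zc K)$ as $z$ ranges over $\Zc K$. The paper does not prove this proposition --- it is quoted from Malfait --- and your argument is essentially the standard one given there: both inclusions read off from the definition of $\overline{B}^1_\psi$ via $\ker\Theta$, and the final isomorphism obtained by applying the first isomorphism theorem to $g\mapsto\xi(c_g)$ on $p^{-1}(\Zc Q)\cap C_G(K)$, whose kernel is $\Zc G$ by injectivity of $\xi$.
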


We can think $\overline{H}_\psi^1(Q,\Zc K)$ as the subgroup of $\Out(G,K)$ whose automorphism classes induce inner automorphisms on $K$ and $Q$. This interpretation is made precise in the next theorem.

\begin{thm}\label{out ses}\cite[Theorem 4.8]{MalfaitWim2002Toag}
	There exist short exact sequences
	\[\begin{tikzcd}
		1\ar{r}{}&\mathcal{K}\ar{r}{}&\Out(G,K)\ar{r}{}&L_1\ar{r}{}&1,
	\end{tikzcd}\]
	and
	\[\begin{tikzcd}
		1\ar{r}{}&\overline{H}_\psi^1(Q,\Zc K)\ar{r}{}&\mathcal{K}\ar{r}{}&L_2\ar{r}{}&1
	\end{tikzcd}\]
	where $$L_1=\{\overline{f}\in\Aut(Q):f\in\Aut(G,K)\}/\Inn(Q)\leq \Out(Q)$$ and $$L_2\cong (\Stab_{\Aut(K)}c/\Inn(K))/\Zc\psi(Q)\leq C_{\Out(K)}\psi(Q)/\psi(\Zc Q).$$
\end{thm}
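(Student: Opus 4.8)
The plan is to bootstrap both sequences out of the two structural facts already recalled for the homomorphism $\Theta\colon\Aut(G,K)\to\Aut(K)\times\Aut(Q)$ (the description of $\im(\Theta)$ as the $c$-fixing pairs and the isomorphism $\xi\colon\ker(\Theta)\to Z^1_\psi(Q,\Zc K)$) together with the definition of $\overline B^1_\psi(Q,\Zc K)$, so that the real content is just organizing the maps and identifying their kernels.

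For the first sequence, compose $\Theta$ with the projections $\Aut(K)\times\Aut(Q)\twoheadrightarrow\Aut(Q)\twoheadrightarrow\Out(Q)$ to get $\rho\colon\Aut(G,K)\to\Out(Q)$, $f\mapsto\overline f\,\Inn(Q)$, whose image is $L_1$ by definition. Since $\overline{c_g}=c_{p(g)}\in\Inn(Q)$ for $g\in G$, one has $\Inn(G)\subseteq\ker(\rho)$, so $\rho$ descends to a surjection $\widetilde\rho\colon\Out(G,K)\twoheadrightarrow L_1$; set $\mathcal K=\ker(\widetilde\rho)$. Then $\mathcal K=\{[f]\in\Out(G,K):\overline f\in\Inn(Q)\}$, and because $p$ is surjective, composing a representative $f$ with a suitable $c_g$ replaces $\overline f$ by $\overline f\,c_{p(g)}$, so every class of $\mathcal K$ has a representative with $\overline f=\mathrm{id}_Q$. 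For such a representative $\Theta(f)=(f|_K,\mathrm{id}_Q)\in\im(\Theta)$ fixes $c$, hence $f|_K\in\Stab_{\Aut(K)}(c)$; define $\lambda([f])$ to be the class of $f|_K$ in $L_2=(\Stab_{\Aut(K)}(c)/\Inn(K))/\psi(\Zc Q)$. Two normal-form representatives of $[f]$ differ by some $c_g$ with $p(g)\in\Zc Q$, and the class of $c_g|_K$ in $\Out(K)$ is $\psi(p(g))\in\psi(\Zc Q)$, so $\lambda$ is well defined; it is a homomorphism since normal forms multiply, and it is surjective because, by the description of $\im(\Theta)$, every $\alpha\in\Stab_{\Aut(K)}(c)$ is realized as $f|_K$ for some $f$ with $\overline f=\mathrm{id}_Q$.

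It remains to compute $\ker(\lambda)$. If $\lambda([f])=1$, a further adjustment of the representative by an inner automorphism $c_{kg}$ with $k\in K$ and $p(g)\in\Zc Q$ achieves simultaneously $f|_K=\mathrm{id}_K$ and $\overline f=\mathrm{id}_Q$, i.e.\ $f\in\ker(\Theta)$, so $\xi(f)\in Z^1_\psi(Q,\Zc K)$ is defined. Any two such representatives differ by some $c_g$ with $g\in C_G(K)$ and $p(g)\in\Zc Q$, and by definition the set of $\xi(c_g)$ over these $g$ is exactly $\overline B^1_\psi(Q,\Zc K)$; hence $[f]\mapsto\xi(f)\bmod\overline B^1_\psi(Q,\Zc K)$ is a well-defined homomorphism $\ker(\lambda)\to\overline H^1_\psi(Q,\Zc K)$. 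It is injective, since $\xi(f)\in\overline B^1_\psi(Q,\Zc K)$ forces $f=c_g\in\Inn(G)$ because $\xi$ is injective on $\ker(\Theta)$, hence $[f]=1$; and it is surjective, since for $z\in Z^1_\psi(Q,\Zc K)$ the automorphism $\xi^{-1}(z)\in\ker(\Theta)$ determines a class in $\ker(\lambda)$ mapping to $z$. This produces the second short exact sequence.

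The main obstacle is the bookkeeping concentrated in the last two paragraphs: verifying that $\lambda$ is well defined and multiplicative, and pinning down the two indeterminacies precisely as $\psi(\Zc Q)$ and as $\overline B^1_\psi(Q,\Zc K)$ respectively. This is exactly where the normalization of representatives (first to $\overline f=\mathrm{id}_Q$, then also to $f|_K=\mathrm{id}_K$), the characterization of $\im(\Theta)$, and the definition of $\overline B^1_\psi(Q,\Zc K)$ all have to be combined carefully; once these identifications are in place, both exact sequences follow formally.
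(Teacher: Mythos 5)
The paper gives no proof of this statement --- it is quoted verbatim from Malfait [Theorem 4.8] --- so there is no internal argument to compare against; your reconstruction from the three quoted ingredients (the description of $\im(\Theta)$, the isomorphism $\xi$ on $\ker(\Theta)$, and the definition of $\overline{B}^1_\psi(Q,\Zc K)$) is correct and is essentially the standard (Malfait's) argument: project to $\Out(Q)$ to get the first sequence, normalize representatives of $\mathcal K$ to $\overline f=\mathrm{id}_Q$ to define $\lambda$, and normalize further to land in $\ker(\Theta)$ to identify $\ker(\lambda)$ with $\overline{H}^1_\psi(Q,\Zc K)$. One point worth flagging: your computation yields $L_2\cong(\Stab_{\Aut(K)}c/\Inn(K))/\psi(\Zc Q)$, whereas the displayed formula in the statement reads $/\Zc\psi(Q)$. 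Your version is the one the argument actually produces --- the normal-form ambiguity is conjugation by $c_g$ with $p(g)\in\Zc Q$, contributing exactly $\psi(\Zc Q)\leq\Out(K)$ --- and it is the only one consistent with the stated containment $L_2\leq C_{\Out(K)}\psi(Q)/\psi(\Zc Q)$, so the $\Zc\psi(Q)$ in the statement should be read as $\psi(\Zc Q)$. The only step you leave implicit is that $\psi(\Zc Q)$ is indeed a normal subgroup of $\Stab_{\Aut(K)}c/\Inn(K)$; this follows since $\Theta(c_g)=(c_g|_K,\mathrm{id}_Q)$ for $p(g)\in\Zc Q$ puts $c_g|_K$ in $\Stab_{\Aut(K)}c$, and $\Stab_{\Aut(K)}c/\Inn(K)\leq C_{\Out(K)}\psi(Q)$ centralizes it.
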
 

There is also a version for the automorphism group, which was also investigated by C. Wells in \cite{wells1971automorphisms}.

\begin{thm}\label{aut ses}\cite[Theorem 4.8]{MalfaitWim2002Toag}
	There exist short exact sequences
	\[\begin{tikzcd}
		1\ar{r}{}&\mathcal{K}'\ar{r}{}&\Aut(G,K)\ar{r}{}&L'_1\ar{r}{}&1,
	\end{tikzcd}\]
	and
	\[\begin{tikzcd}
		1\ar{r}{}&Z_\psi^1(Q,\Zc K)\ar{r}{}&\mathcal{K}'\ar{r}{}&L'_2\ar{r}{}&1
	\end{tikzcd}\]
	where $$L'_1=\{\overline{f}\in\Aut(Q):f\in\Aut(G,K)\}\leq \Aut(Q)$$ and $$L'_2=\Stab_{\Aut(K)}c\leq \Aut(K).$$
\end{thm}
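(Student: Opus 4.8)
The final statement to prove is \Cref{aut ses}: for a group extension $1\to K\to G\xrightarrow{p} Q\to 1$ with associated data $(\psi,c)$, there exist short exact sequences
\[\begin{tikzcd}
	1\ar{r}{}&\mathcal{K}'\ar{r}{}&\Aut(G,K)\ar{r}{}&L'_1\ar{r}{}&1,
\end{tikzcd}\]
and
\[\begin{tikzcd}
	1\ar{r}{}&Z_\psi^1(Q,\Zc K)\ar{r}{}&\mathcal{K}'\ar{r}{}&L'_2\ar{r}{}&1,
\end{tikzcd}\]
with $L'_1=\{\overline f:f\in\Aut(G,K)\}\le\Aut(Q)$ and $L'_2=\Stab_{\Aut(K)}c\le\Aut(K)$.

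\medskip

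\textbf{Plan of proof.} The strategy is to extract everything from the morphism $\Theta:\Aut(G,K)\to\Aut(K)\times\Aut(Q)$, $f\mapsto(f|_K,\overline f)$, which is already at hand, together with the two facts quoted from \cite[Theorem 3.3, 3.6]{MalfaitWim2002Toag}: that $\im\Theta$ consists exactly of the pairs $(\alpha,\beta)\in\Aut(K)\times\Aut(Q)$ fixing the cohomology class of $c$ (in the appropriate twisted sense), and that $\ker\Theta\cong Z^1_\psi(Q,\Zc K)$ via an explicit isomorphism $\xi$. First I would define $\mathcal K':=\Theta^{-1}\bigl(\Aut(K)\times\{1\}\bigr)$, i.e. the normal subgroup of $\Aut(G,K)$ consisting of automorphisms inducing the identity on $Q$. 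Composing $\Theta$ with the second projection $\Aut(K)\times\Aut(Q)\to\Aut(Q)$ gives a homomorphism $\Aut(G,K)\to\Aut(Q)$ whose image is by definition $L'_1$ and whose kernel is exactly $\mathcal K'$; this yields the first short exact sequence immediately.

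\medskip

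For the second sequence I would restrict $\Theta$ to $\mathcal K'$. By construction the second coordinate is trivial on $\mathcal K'$, so $\Theta|_{\mathcal K'}$ factors as a homomorphism $\rho:\mathcal K'\to\Aut(K)$, $f\mapsto f|_K$. Its kernel is $\mathcal K'\cap\ker\Theta=\ker\Theta$ (since any $f$ with $f|_K=\mathrm{id}$ and $\overline f=\mathrm{id}$ already lies in $\mathcal K'$), which is identified with $Z^1_\psi(Q,\Zc K)$ by $\xi$. It remains to identify $\im\rho$. On one hand $\im\rho\subseteq\Stab_{\Aut(K)}c$: if $f\in\mathcal K'$ then $(f|_K,\mathrm{id})\in\im\Theta$, and by the quoted description of $\im\Theta$ the pair $(f|_K,\mathrm{id})$ must fix $c$; since the $Q$-component is the identity this says precisely $f|_K\in\Stab_{\Aut(K)}c$. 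Conversely, given $\alpha\in\Stab_{\Aut(K)}c$, the pair $(\alpha,\mathrm{id})$ fixes $c$ and hence lies in $\im\Theta$, so there is $f\in\Aut(G,K)$ with $f|_K=\alpha$ and $\overline f=\mathrm{id}$, i.e. $f\in\mathcal K'$ with $\rho(f)=\alpha$. Hence $\im\rho=\Stab_{\Aut(K)}c=L'_2$, and $1\to Z^1_\psi(Q,\Zc K)\to\mathcal K'\xrightarrow{\rho}L'_2\to 1$ is exact. Normality of $Z^1_\psi(Q,\Zc K)$ in $\mathcal K'$ follows since it is the kernel of $\rho$ (the group structure transported along $\xi$ being the additive one on cocycles, which one checks is compatible with conjugation in $\Aut(G,K)$ — this is part of what $\xi$ being a group isomorphism encodes).

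\medskip

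\textbf{Main obstacle.} The essentially formal part is the exactness of the two sequences given the two quoted facts; the delicate point is the precise bookkeeping of the twisted-cohomology action. The statement "$\im\Theta$ fixes $c$" from \cite[Theorem 3.3]{MalfaitWim2002Toag} must be applied with the correct action of $\Aut(K)\times\Aut(Q)$ on $H^2_\psi(Q,\Zc K)$, and one has to verify that, when the $\Aut(Q)$-component is the identity, "fixing $c$" reduces to the condition $\alpha\in\Stab_{\Aut(K)}c$ with $c$ viewed as a $2$-cocycle rather than merely a class — this is where \cite{MalfaitWim2002Toag} chooses representatives carefully, and I would follow their normalization so that $\Stab_{\Aut(K)}c$ means the stabilizer of the cocycle. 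A secondary subtlety is checking that $\xi:\ker\Theta\to Z^1_\psi(Q,\Zc K)$ intertwines conjugation by elements of $\mathcal K'$ with the natural $L'_2$-action on cocycles, so that the second sequence is not just an extension of sets but of groups; this is where most of the genuine (but routine) computation lives, and I would handle it exactly as in the proof of the companion \Cref{out ses}, passing to the quotient by $\Inn G$.
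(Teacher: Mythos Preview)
The paper does not give its own proof of this statement: \Cref{aut ses} is quoted directly from \cite[Theorem 4.8]{MalfaitWim2002Toag} as a preliminary result, with no argument supplied. So there is nothing in the present paper to compare your proposal against.

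That said, your argument is correct and is essentially the standard one (and, as far as one can tell from the structure of \cite{MalfaitWim2002Toag} as summarized here, the one Malfait uses): define $\mathcal{K}'$ as the kernel of the composite $\Aut(G,K)\xrightarrow{\Theta}\Aut(K)\times\Aut(Q)\to\Aut(Q)$, then analyze the restriction of the first projection to $\mathcal{K}'$ using the two quoted facts about $\im\Theta$ and $\ker\Theta$. The only genuine content beyond formal kernel--image reasoning is the identification $\im\rho=\Stab_{\Aut(K)}c$, and you handle both inclusions correctly via the characterization of $\im\Theta$. The subtlety you flag---whether ``fixing $c$'' refers to the cocycle or its class, and how this interacts with the $\Aut(Q)$-component being trivial---is real but is exactly the normalization choice made in \cite{MalfaitWim2002Toag}; once one adopts their conventions (as the paper does by citing them), this reduces to a tautology. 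Your remark about checking that $\xi$ intertwines conjugation with the natural action is likewise a routine verification that belongs to the proof in \cite{MalfaitWim2002Toag} rather than to anything the present paper needs to redo.
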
 

\begin{rem}
	If $K$ is a characteristic subgroup of $G$ then $\Aut(G,K)=\Aut(G)$ and $\Out(G,K)=\Out(G)$.
\end{rem}

\begin{lem}\label{Minkowski ses}\cite[Lemma 2.4]{daura2024actions}
	Let $\begin{tikzcd}		1\ar{r}{}& K\ar{r}{}&H\ar{r}{p}&Q\ar{r}{}&1
	\end{tikzcd}$ be a short exact sequence of groups. If $K$ and $Q$ are Minkowski, then $H$ is Minkowski. If $K$ is finite and $H$ is Minkowski then $Q$ is Minkowski.
\end{lem}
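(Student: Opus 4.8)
The plan is to exploit multiplicativity of orders along the short exact sequence, using only the elementary observations that a subgroup of a Minkowski group is Minkowski and that the preimage of a finite group under a surjection with finite kernel is finite.

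For the first assertion, fix constants $C_K$ and $C_Q$ witnessing that $K$ and $Q$ are Minkowski, and let $G\leq H$ be an arbitrary finite subgroup. First I would look at $G\cap K$: it is a finite subgroup of $K$, hence $|G\cap K|\leq C_K$. Next, the restriction of $p$ to $G$ has kernel $G\cap K$ and image $p(G)\leq Q$, which is a finite subgroup of $Q$, so $|p(G)|\leq C_Q$. Since $|G|=|G\cap K|\cdot|p(G)|$, we conclude $|G|\leq C_KC_Q$. As $G$ was arbitrary, $H$ is Minkowski with constant $C_KC_Q$.

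For the second assertion, suppose $K$ is finite and let $C_H$ witness that $H$ is Minkowski. Given an arbitrary finite subgroup $\overline{G}\leq Q$, I would consider its full preimage $p^{-1}(\overline{G})\leq H$, which sits in a short exact sequence $1\to K\to p^{-1}(\overline{G})\to\overline{G}\to 1$. Since both $K$ and $\overline{G}$ are finite, $p^{-1}(\overline{G})$ is a finite subgroup of $H$ of order $|K|\cdot|\overline{G}|$, so $|K|\cdot|\overline{G}|\leq C_H$, giving $|\overline{G}|\leq C_H/|K|$. Hence $Q$ is Minkowski with constant $C_H/|K|$.

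There is no real obstacle here: the only points requiring (minimal) care are that subgroups inherit the Minkowski property and that one must take the \emph{full} preimage $p^{-1}(\overline{G})$ rather than an arbitrary lift, so that the order identity $|p^{-1}(\overline{G})|=|K|\cdot|\overline{G}|$ is available. Everything else is Lagrange's theorem applied to the exact sequence.
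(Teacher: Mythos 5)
Your proof is correct and is the standard argument; the paper itself only cites this lemma from \cite{daura2024actions} without reproving it, and the cited proof proceeds in essentially the same way (intersecting a finite subgroup with $K$ and projecting to $Q$ for the first claim, taking the full preimage of a finite subgroup of $Q$ for the second). Both order identities you invoke, $|G|=|G\cap K|\cdot|p(G)|$ and $|p^{-1}(\overline{G})|=|K|\cdot|\overline{G}|$, are correctly justified, so there is nothing to add.
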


\begin{lem}\label{outer automorphism and finite index subgroups}\cite[Corollary 2.4]{daura2024actions}
	Let $\Gamma$ and $\Gamma'$ be finitely generated groups with finitely generated center such that $\Gamma'\trianglelefteq\Gamma$ and $\Gamma/\Gamma'=F$ is a finite group. If $\Out(\Gamma')$ is a Minkowski, then $\Out(\Gamma)$ is Minkowski.
\end{lem}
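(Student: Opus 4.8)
The plan is to deduce this from \Cref{Minkowski ses} together with \Cref{out ses} (or rather a hands-on argument in the spirit of the Malfait machinery), applied to the extension $1\to\Gamma'\to\Gamma\to F\to1$. Since $\Gamma'$ is normal in $\Gamma$ but need not be characteristic, the first step is to replace it by a characteristic subgroup: because $\Gamma$ is finitely generated, it has only finitely many subgroups of index $|F|$, so their intersection $\Gamma''$ is a characteristic subgroup of $\Gamma$ of finite index, contained in $\Gamma'$, and with $\Gamma/\Gamma''$ finite. It also has finitely generated center (as a finite-index subgroup of a finitely generated group with f.g. center — this is part of the standing hypotheses, and one should note it passes to finite-index subgroups). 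Moreover $\Out(\Gamma'')$ is Minkowski: apply the hypothesis with the pair $(\Gamma'',\Gamma')$ in place of $(\Gamma',\Gamma)$? No — that runs the wrong way. Instead I would argue that $\Out(\Gamma')$ Minkowski implies $\Out(\Gamma'')$ Minkowski directly, using the same structural argument I am about to describe for $\Out(\Gamma)$; so it is cleanest to prove the statement first in the special case where $\Gamma'$ is characteristic, and then reduce the general case to it by the two-step tower $\Gamma''\trianglelefteq\Gamma'\trianglelefteq\Gamma$ with $\Gamma''$ characteristic in both.

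So assume $\Gamma'$ is characteristic in $\Gamma$, hence $\Aut(\Gamma,\Gamma')=\Aut(\Gamma)$ and $\Out(\Gamma,\Gamma')=\Out(\Gamma)$. Now apply \Cref{out ses} with $G=\Gamma$, $K=\Gamma'$, $Q=F$. We get
\[\begin{tikzcd}
1\ar{r}&\mathcal{K}\ar{r}&\Out(\Gamma)\ar{r}&L_1\ar{r}&1
\end{tikzcd}\]
with $L_1\leq\Out(F)$, and $F$ finite so $\Out(F)$ is finite, hence Minkowski. By \Cref{Minkowski ses} it therefore suffices to show $\mathcal{K}$ is Minkowski. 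For this, use the second exact sequence of \Cref{out ses},
\[\begin{tikzcd}
1\ar{r}&\overline{H}_\psi^1(F,\Zc\Gamma')\ar{r}&\mathcal{K}\ar{r}&L_2\ar{r}&1,
\end{tikzcd}\]
where $L_2\leq C_{\Out(\Gamma')}\psi(F)/\psi(\Zc F)$ is a subquotient of $\Out(\Gamma')$, which is Minkowski; any subgroup and any quotient of a Minkowski group is Minkowski (the former is immediate, the latter because finite subgroups of the quotient lift, at least up to bounded index — here one can simply invoke \Cref{Minkowski ses} again, or note that $L_2$ is literally a subquotient and argue directly). So by \Cref{Minkowski ses} it remains to check that $\overline{H}_\psi^1(F,\Zc\Gamma')$ is Minkowski. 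But $\overline{H}_\psi^1(F,\Zc\Gamma')$ is a quotient of $H^1_\psi(F,\Zc\Gamma')$, and since $F$ is finite and $\Zc\Gamma'$ is a finitely generated abelian group, $Z^1_\psi(F,\Zc\Gamma')$ is a finitely generated abelian group, so $H^1_\psi(F,\Zc\Gamma')$ is finitely generated abelian, hence so is its quotient $\overline{H}_\psi^1(F,\Zc\Gamma')$; a finitely generated abelian group has a finite torsion subgroup and torsion-free quotient $\Z^k$, both of which are Minkowski (Minkowski's theorem for $\Gl(k,\Z)$, or just: a f.g. abelian group has finitely many elements of any fixed order, hence boundedly many finite subgroups of bounded size — more simply, its torsion subgroup is finite), so $\overline{H}_\psi^1(F,\Zc\Gamma')$ is Minkowski. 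Chaining the three applications of \Cref{Minkowski ses} gives that $\Out(\Gamma)$ is Minkowski, completing the characteristic case.

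Finally, for the general case with $\Gamma'$ merely normal: form $\Gamma''=\bigcap\{\,\Lambda\leq\Gamma : [\Gamma:\Lambda]=|F|\,\}$, characteristic in $\Gamma$ and of finite index, and contained in $\Gamma'$; it is also normal of finite index in $\Gamma'$. Applying the characteristic case once to the pair $\Gamma''\trianglelefteq\Gamma'$ (note $\Gamma''$ is characteristic in $\Gamma'$ too, being an intersection of all index-$|F|/[\Gamma':\Gamma'']\cdot(\dots)$ — more carefully, $\Gamma''$ contains the characteristic subgroup of $\Gamma'$ obtained the same way, so enlarge $\Gamma''$ if necessary to be characteristic in both $\Gamma$ and $\Gamma'$, still of finite index) shows $\Out(\Gamma'')$ is Minkowski, and applying it again to $\Gamma''\trianglelefteq\Gamma$ shows $\Out(\Gamma)$ is Minkowski. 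The one technical point to watch is that all the groups appearing ($\Gamma''$, $\Zc\Gamma'$, $\Zc\Gamma''$) are finitely generated; this follows because a finite-index subgroup of a finitely generated group is finitely generated, and the center of a finite-index subgroup of a group with finitely generated center is commensurable with (hence, being a subgroup of a f.g. abelian group, itself) finitely generated.

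The main obstacle I anticipate is purely bookkeeping: arranging a subgroup that is simultaneously characteristic in $\Gamma$ and in $\Gamma'$ and verifying the finite-generation of the various centers along the tower, so that \Cref{out ses} and \Cref{Minkowski ses} apply at each stage. The cohomological input — that $H^1$ of a finite group with finitely generated abelian coefficients is finitely generated abelian — is standard and not the difficulty.
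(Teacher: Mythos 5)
Your argument for the characteristic case is correct and is exactly the intended use of the Malfait machinery: $L_1$ is finite because $\Out(F)$ is, $L_2$ is a quotient of a subgroup of $\Out(\Gamma')$ by the finite group $\psi(\Zc F)$ and hence Minkowski, and $\overline{H}^1_\psi(F,\Zc\Gamma')$ is a quotient of a subgroup of $(\Zc\Gamma')^{|F|}$, hence finitely generated abelian and Minkowski; two applications of \cref{Minkowski ses} finish that case. The gap is in your reduction of the general case. You need $\Out(\Gamma'')$ to be Minkowski for the characteristic subgroup $\Gamma''\leq\Gamma'$, but all you know is that $\Out(\Gamma')$ is Minkowski, and the "characteristic case" applied to the pair $\Gamma''\trianglelefteq\Gamma'$ yields the implication $\Out(\Gamma'')$ Minkowski $\Rightarrow$ $\Out(\Gamma')$ Minkowski --- the opposite of what you assert in your last paragraph. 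You flag this yourself early on ("that runs the wrong way") and then claim the descent can be done "directly, using the same structural argument", but the structural argument only ever expresses $\Out$ of the ambient group in terms of $\Out$ of the subgroup, never the reverse; no descent statement of the form "$\Out(\Gamma')$ Minkowski implies $\Out(\Gamma'')$ Minkowski for $\Gamma''$ of finite index in $\Gamma'$" is established anywhere, and it is not a formal consequence of the tools you invoke. As written, the general case is circular.

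The standard repair avoids passing to a characteristic subgroup altogether. \Cref{out ses} applies to any normal $K$ and computes $\Out(G,K)=\Aut(G,K)/\Inn G$, so your characteristic-case computation actually shows, verbatim, that $\Out(\Gamma,\Gamma')$ is Minkowski. Since $\Gamma$ is finitely generated it has only finitely many subgroups of index $|F|$; $\Aut(\Gamma)$ permutes them, so $\Aut(\Gamma,\Gamma')$ has finite index in $\Aut(\Gamma)$ and hence $\Out(\Gamma,\Gamma')$ has finite index in $\Out(\Gamma)$ (note $\Inn\Gamma\leq\Aut(\Gamma,\Gamma')$ because $\Gamma'$ is normal). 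A group containing a finite-index Minkowski subgroup is Minkowski (if $[G:H]=m$ and $S\leq G$ is finite then $|S|\leq m\,|S\cap H|$), so $\Out(\Gamma)$ is Minkowski. This replaces your entire last paragraph and removes the need for the unproved descent.
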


\subsection{Infranilmanifolds and almost crystallographic groups}

Let $N$ be a simply connected nilpotent Lie group and $C$ be a maximal compact subgroup of $\Aut(N)$. A discrete cocompact subgroup $\Gamma$ of $N\rtimes C$ is called an almost-crystallographic group (abbreviated as AC-group). Moreover, $\Gamma$ is said to be almost-Bieberbach if it is torsion-free. If $\Lambda$ be a finitely generated torsion-free nilpotent group, an extension $1\longrightarrow \Lambda\longrightarrow \Gamma\longrightarrow G\longrightarrow 1$ in which $G$ is finite is said to be essential if $\Lambda$ is a maximal nilpotent subgroup of $\Gamma$. 
Note that crystallographic groups are an example of AC-groups. Bieberbach theorems for crystallographic groups have been generalized to almost-crystallographic groups.

\begin{thm}\label{generalized Bieberbach theorems}\cite[Chapter 2]{dekimpe2006almost}
	Let $N$ be a simply connected nilpotent Lie group and $C$ a maximal compact subgroup of $\Aut(N)$. 
	\begin{itemize}
		\item[1.] (Generalized 1st Bieberbach theorem) Let $\Gamma\leq N\rtimes C$ be an AC-group of $N$. The subgroup $N\cap \Gamma=\Lambda$ is a lattice of $N$, $\Lambda$ is the unique normal maximal nilpotent subgroup of $\Gamma$ and $\Gamma/\Lambda$ is a finite group.
		\item[2.] (Generalized 2nd Bieberbach theorem) Let $\Gamma$ and $\Gamma'$ be AC-groups of $N$. Assume that there exists an isomorphism $f:\Gamma\longrightarrow \Gamma'$. Then $f$ can be realized as a conjugation by an element of $\Aff(N)=N\rtimes\Aut(N)$.
		\item[3.] (Generalized 3rd Bieberbach theorem) Let $\Lambda$ be a lattice of $N$. There are only finitely many essential extensions $1\longrightarrow \Lambda\longrightarrow \Gamma\longrightarrow G\longrightarrow 1$.
	\end{itemize}
\end{thm}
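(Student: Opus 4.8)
The statement is the classical ``generalized Bieberbach package'' of Auslander--Lee--Raymond--Dekimpe, so the plan is to reconstruct its proof rather than invent one; I treat the three parts in turn.

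\textbf{Part (1).} The group $\Gamma$ is a discrete cocompact subgroup of the Lie group $L:=N\rtimes C$, in which $N$ is a closed normal subgroup with \emph{compact} quotient $L/N\cong C$. The analytic heart is to deduce that $\Lambda=\Gamma\cap N$ is a uniform lattice in $N$ and that $\Gamma N$ is closed in $L$ --- this is the Auslander-type statement on lattices in a (nilpotent)-by-compact Lie group (one may invoke the structure theory of lattices in Lie groups, $N$ being the nilradical of $L$, or argue directly with the properly discontinuous cocompact affine action of $\Gamma$ on $N$). Granting this, $\Gamma/\Lambda\cong\Gamma N/N$ is a countable, closed, hence finite, subgroup of the compact group $C$, and in particular $\Gamma$ is polycyclic-by-finite. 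For the uniqueness of $\Lambda$ as the maximal normal nilpotent subgroup: if $M\trianglelefteq\Gamma$ is nilpotent then $M\Lambda\trianglelefteq\Gamma$ is nilpotent (Fitting's theorem) with $M\Lambda\cap N=\Lambda$, and the finite holonomy $M\Lambda/\Lambda\hookrightarrow C$ acts on the Mal'cev Lie algebra $\mathfrak{n}$ of $N$ by finite-order automorphisms that are at once semisimple (they lie in the compact group $C$) and unipotent (conjugation inside the nilpotent group $M\Lambda$ is unipotent along the lower central series), hence act trivially; as the $C$-action on $N$ is faithful, $M\Lambda/\Lambda=1$, i.e. $M\subseteq\Lambda$.

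\textbf{Part (2).} Being the unique maximal normal nilpotent subgroups, $\Lambda$ and $\Lambda'$ are characteristic, so $f(\Lambda)=\Lambda'$. By Mal'cev rigidity the isomorphism $f|_\Lambda\colon\Lambda\to\Lambda'$ of lattices in $N$ extends to a unique $\varphi\in\Aut(N)$. Conjugating $\Gamma$ by $\varphi\in\Aff(N)$, and then conjugating once more so that the finite holonomy groups of $\Gamma$ and $\Gamma'$ lie in one common maximal compact subgroup of $\Aut(N)$, we reduce to the case $\Lambda=\Lambda'$, $f|_\Lambda=\mathrm{id}$. Writing elements as $(n,c)$, the automorphism $c$ of $N$ is recovered from the conjugation action of $(n,c)$ on $\Lambda$ only up to $\Inn(N)$, but since a compact subgroup of the simply connected nilpotent group $\Inn(N)$ is trivial we have $\Inn(N)\cap C=1$, so $f$ fixing $\Lambda$ forces $f(n,c)=(\beta(n,c),c)$. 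The homomorphism property of $f$ together with $\beta|_\Lambda=\mathrm{id}$ expresses $n\mapsto\beta(n,c)n^{-1}$ as a $1$-cocycle of the finite holonomy group $G$ with values in $N$ vanishing on $\Lambda$; analysing it one layer of the central series at a time, each obstruction lies in $H^1$ of the finite group $G$ with coefficients in a $\Q$-vector space subquotient of $\mathfrak{n}$, hence vanishes. This exhibits $\beta$ as a coboundary, i.e. conjugation by some $m\in N\subseteq\Aff(N)$; composing with $\varphi$ and the auxiliary conjugations gives the claim.

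\textbf{Part (3).} Such a $\Gamma$ embeds in $N\rtimes C$ with $\Gamma\cap N=\Lambda$ and finite holonomy $G\hookrightarrow\Aut(\Lambda)$, yielding an abstract kernel $\psi\colon G\to\Out(\Lambda)$. Now $\Aut(\Lambda)$ is (virtually) arithmetic: a Minkowski-type bound forces $|G|$ to be bounded in terms of $\dim N$, and an arithmetic group has only finitely many conjugacy classes of finite subgroups, so there are only finitely many possibilities for the pair $(G,\psi)$ up to the equivalence relevant for extensions. For each such $(G,\psi)$ the set of extension classes is a torsor over $H^2_\psi(G,\Zc\Lambda)$, which is finite because $G$ is finite and $\Zc\Lambda$ is a finitely generated abelian group (group cohomology of a finite group with finitely generated coefficients is finitely generated and annihilated by $|G|$ in positive degree). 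Hence there are finitely many extensions in total, a fortiori finitely many essential ones.

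\textbf{Main obstacle.} The genuinely hard step is the Lie-theoretic input in Part (1): showing that the translation subgroup $\Lambda=\Gamma\cap N$ is a uniform lattice in $N$, equivalently that the holonomy $\Gamma/\Lambda$ is finite. Everything afterwards is comparatively soft --- Mal'cev rigidity, Fitting's theorem, the semisimple-versus-unipotent dichotomy, and the vanishing of finite-group cohomology over $\Q$. The residual subtlety is that, because $N$ need not be abelian, the cohomological vanishing in Part (2) must be applied one central-series layer at a time rather than in one stroke, and one must first align the two holonomy groups inside a common maximal compact before running the cocycle argument.
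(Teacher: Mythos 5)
This theorem is not proved in the paper: it is quoted verbatim as background from \cite[Chapter 2]{dekimpe2006almost}, so there is no in-paper argument to compare against. Your reconstruction follows exactly the classical Auslander--Lee--Raymond--Dekimpe line that the citation points to, and it is correct in outline: Auslander's theorem on lattices in (nilpotent)-by-compact groups plus the semisimple-versus-unipotent dichotomy for part (1), Mal'cev rigidity plus vanishing of $H^1$ of a finite group with $\Q$-vector-space coefficients, applied layer by layer along the central series, for part (2), and bounded holonomy plus finiteness of $H^2_\psi(G,\Zc\Lambda)$ for part (3). Two places where your sketch leans on unproved nontrivial input are worth flagging: the Auslander-type statement that $\Gamma\cap N$ is a lattice and $\Gamma N$ is closed (which you correctly identify as the hard analytic core), and, in part (3), the assertion that $\Out(\Lambda)$ has only finitely many conjugacy classes of finite subgroups --- a bound on $|G|$ alone does not give finitely many equivalence classes of abstract kernels $\psi\colon G\to\Out(\Lambda)$, so one really needs the finiteness of conjugacy classes of finite subgroups in the arithmetic group $\Aut(\Lambda)$ and its descent to $\Out(\Lambda)$, which deserves an explicit reference rather than a parenthesis.
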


If $\Gamma$ is an almost-Bieberbach group of $N$ then $\Gamma$ acts freely on $N$ and the orbit space $N/\Gamma$ is a closed connected aspherical manifold which is finitely covered by the nilmanifold $N/\Lambda$. These manifolds are known as infranilmanifolds, and just like flat manifolds, they also have a geometric characterization, see \cite{gromov1978almost,ruh1982almost}. The algebraic characterization of crystallographic groups (see \cite[Theorem 2.1.4]{dekimpe2016users}) can also be generalized to AC-groups, but we need some further definitions to state it.

\begin{defn}\cite[Definition 2.3.3]{dekimpe2006almost}
	Let $\Gamma$ be a virtually polycyclic group. The Fitting subgroup $\Fitt(\Gamma)$ is the unique maximal normal nilpotent subgroup of $\Gamma$. The Fitting subgroup $\Fitt(\Gamma)$ is unique.
\end{defn}

\begin{thm}\label{AC-groups charactherization}\cite[Theorem 3.4.6]{dekimpe2006almost}
	Let $\Gamma$ be a virtually polycyclic group. The following are equivalent:
	\begin{itemize}
		\item[1.] $\Gamma$ is an AC-group.
		\item[2.] $\Fitt(\Gamma)$ is torsion-free, maximal nilpotent and of finite index in $\Gamma$.
		\item[3.] $\Gamma$ contains a torsion-free nilpotent subgroup $\Lambda$ of finite index such that $C_\Gamma(\Lambda)$ is torsion-free.
		\item[4.] $\Gamma$ contains a nilpotent subgroup of finite index and $\Gamma$ does not contain any non-trivial finite normal subgroup.
	\end{itemize}
\end{thm}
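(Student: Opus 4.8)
The plan is to prove the cycle $(1)\Rightarrow(2)\Rightarrow(3)\Rightarrow(4)\Rightarrow(1)$. Two of these steps carry genuine content --- $(1)\Rightarrow(2)$, which is essentially a reading of the generalized first Bieberbach theorem, and $(4)\Rightarrow(1)$, which produces the simply connected nilpotent Lie group and the affine action out of purely algebraic data --- while the implications among $(2)$, $(3)$, $(4)$ are routine manipulations with the Fitting subgroup of a virtually polycyclic group.

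For $(1)\Rightarrow(2)$, let $\Gamma\leq N\rtimes C$ be an AC-group and put $\Lambda=\Gamma\cap N$. By the first part of \Cref{generalized Bieberbach theorems}, $\Lambda$ is a lattice of $N$, hence finitely generated, torsion-free and nilpotent; it has finite index in $\Gamma$ and is the unique maximal normal nilpotent subgroup, so since $\Fitt(\Gamma)$ is normal and nilpotent we get $\Fitt(\Gamma)=\Lambda$. To promote ``maximal normal nilpotent'' to ``maximal nilpotent'', I would take any nilpotent $H$ with $\Lambda\leq H\leq\Gamma$; then $H\cap N=\Lambda$ is normal in $H$ with $H/\Lambda$ finite, and the conjugation action of $H$ on $\Lambda$ is nilpotent because $H$ is. Writing $h=(v,\phi)\in N\rtimes C$, some power of $\phi$ is trivial, while the induced automorphism of $\Lambda/[\Lambda,\Lambda]$ is simultaneously of finite order and unipotent, hence the identity; climbing the lower central series of $\Lambda$ shows $\phi$ acts nilpotently on $\Lambda$, hence on $N$ (as $\Lambda$ is Zariski dense), so the finite-order element $\phi\in C$ is unipotent and therefore $\phi=\mathrm{id}$. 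Thus $h\in N\cap\Gamma=\Lambda$ and $H=\Lambda$.

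For the algebraic implications, $(2)\Rightarrow(3)$ is immediate on taking $\Lambda=\Fitt(\Gamma)$: if $x\in C_\Gamma(\Lambda)$ has finite order then $\langle\Lambda,x\rangle=\Lambda\times\langle x\rangle$ is nilpotent and contains the maximal nilpotent subgroup $\Lambda$, so $x\in\Lambda$, hence $x=1$ by torsion-freeness. For $(3)\Rightarrow(4)$, the group $\Lambda$ in $(3)$ is a nilpotent subgroup of finite index; and if $F\trianglelefteq\Gamma$ were a non-trivial finite normal subgroup, it would meet the normal core $\Lambda_0\trianglelefteq\Gamma$ of $\Lambda$ trivially, whence $\langle F,\Lambda_0\rangle=F\times\Lambda_0$ and $F\leq C_\Gamma(\Lambda_0)$; here I would invoke the routine fact that torsion-freeness of $C_\Gamma(\Lambda)$ is inherited by $C_\Gamma(\Lambda_0)$ for a finite-index subgroup $\Lambda_0\leq\Lambda$ (a consequence of the uniqueness of the Mal'cev completion), forcing $F=1$.

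The hard part is $(4)\Rightarrow(1)$. First I would recover the picture of $(2)$: under $(4)$, $\Fitt(\Gamma)=:\Lambda$ has finite index (it contains the normal core of any nilpotent finite-index subgroup) and is torsion-free (its torsion subgroup is finite and characteristic, hence a finite normal subgroup of $\Gamma$, hence trivial); moreover $C_\Gamma(\Lambda)\leq\Lambda$, because the image of $C_\Gamma(\Lambda)$ in the finite group $\Gamma/\Lambda$ identifies $C_\Gamma(\Lambda)/\Zc(\Lambda)$ with a finite group and $\Zc(\Lambda)\leq\Zc(C_\Gamma(\Lambda))$, so $C_\Gamma(\Lambda)$ is centre-by-finite, so $[C_\Gamma(\Lambda),C_\Gamma(\Lambda)]$ is a finite normal subgroup of $\Gamma$ and hence trivial, so $C_\Gamma(\Lambda)$ is abelian and $C_\Gamma(\Lambda)\cdot\Lambda$ is a normal nilpotent subgroup, forcing $C_\Gamma(\Lambda)\leq\Lambda$. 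Then I would pass to the Mal'cev completion $N$ of $\Lambda$ --- a simply connected nilpotent Lie group in which $\Lambda$ is a cocompact lattice and into whose automorphism group $\Aut(\Lambda)$ embeds --- and construct a homomorphism $\Gamma\to\Aff(N)=N\rtimes\Aut(N)$ restricting to the inclusion $\Lambda\hookrightarrow N$. Its kernel centralizes $\Lambda$, hence lies in $C_\Gamma(\Lambda)\leq\Lambda$, and meets $\Lambda$ trivially, so it is trivial; the holonomy $\Gamma/\Lambda$ is finite, hence contained in some maximal compact subgroup $C\leq\Aut(N)$; and since $\Lambda$ is a cocompact lattice of finite index in $\Gamma$, the image of $\Gamma$ is discrete and cocompact in $N\rtimes C$, i.e.\ an AC-group. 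The real obstacle is this construction of the affine representation: one pushes the extension class of $1\to\Lambda\to\Gamma\to\Gamma/\Lambda\to1$ forward along $\Zc(\Lambda)\hookrightarrow\Zc(N)$, observes that it vanishes because $\Zc(N)$ is a rational vector space and $\Gamma/\Lambda$ is finite, and extracts from this vanishing a compatible copy of $\Gamma$ inside $N\rtimes\Aut(N)$ --- this is the algebraic heart of the generalized Bieberbach theory, and the place where the torsion-freeness encoded in conditions $(2)$, $(3)$, $(4)$ is genuinely used.
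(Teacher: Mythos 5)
The paper itself offers no proof of this statement (it is quoted from Dekimpe's book), so I can only judge your argument on its own terms. Most of your cycle is the standard one and looks sound in outline: $(1)\Rightarrow(2)$ via the generalized first Bieberbach theorem (\cref{generalized Bieberbach theorems}) together with the unipotent-versus-finite-order trick on the Mal'cev completion; $(2)\Rightarrow(3)$; and $(4)\Rightarrow(1)$, where you correctly locate the work in the pushout of the extension along $\Lambda\hookrightarrow N$, the vanishing of $H^2$ of the finite quotient with coefficients in the rational vector space $\Zc N$, and the fact (obtained from the Schur/Fitting argument) that $C_\Gamma(\Fitt(\Gamma))\leq\Fitt(\Gamma)$, which makes the affine representation faithful.

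The genuine gap is in $(3)\Rightarrow(4)$, at the point where you invoke the ``routine fact'' that torsion-freeness of $C_\Gamma(\Lambda)$ is inherited by $C_\Gamma(\Lambda_0)$ for the normal core $\Lambda_0\leq\Lambda$. Uniqueness of the Mal'cev completion (equivalently, uniqueness of roots in torsion-free nilpotent groups) only gives this for elements that centralize $\Lambda_0$ \emph{and normalize} $\Lambda$: if $x\lambda x^{-1}$ need not lie in $\Lambda$, you cannot compare it with $\lambda$ via their common power in $\Lambda_0$, and an element of your finite normal subgroup $F$ has no reason to normalize $\Lambda$. Indeed, as a statement about an arbitrary torsion-free nilpotent subgroup the claim is false: let $K=\langle a,b\mid a^2=b^2\rangle$ (the Klein bottle group), let $\sigma$ be the involution swapping $a$ and $b$, and set $\Gamma=K\rtimes\langle\sigma\rangle$, $\Lambda=\langle a\rangle$, $\Lambda_0=\langle a^2\rangle$; since $a$ and $b$ are not conjugate in $K$, the centralizer $C_\Gamma(\Lambda)$ is torsion-free, yet $\sigma\in C_\Gamma(\Lambda_0)$ has order two. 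So any correct proof of the inheritance must use that $\Lambda$ has finite index in $\Gamma$ in an essential way, and the only ways I see to do that pass through knowing that $\Gamma$ is almost-crystallographic, or at least that $\Fitt(\Gamma)$ is torsion-free and maximal nilpotent --- i.e.\ the claim is essentially equivalent to the implication you are proving, so as written the step is circular. Note that rerouting through $(3)\Rightarrow(2)$ hits the identical obstruction (the torsion of $\Fitt(\Gamma)$, or a central torsion element of it, is only seen to centralize $\Lambda_0$ or $\Lambda\cap\Fitt(\Gamma)$, not $\Lambda$), and no fixed-point argument on the finite group $F$ alone can work, since a finite nilpotent group can act on a finite group without nontrivial fixed points. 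This step is where the real content of the equivalence with $(3)$ sits, and it needs an argument through the Mal'cev completion of $\Lambda_0$ and the induced map to $\Out((\Lambda_0)_\Q)$ in the spirit of \cref{infranilmanifolds malcev completion injective}, not the quoted centralizer statement.
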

\begin{corollary}
	If $\Gamma$ is a torsion-free virtually polycyclic group then $\Gamma$ is an AC-group if and only if it contains a nilpotent subgroup of finite index.
\end{corollary}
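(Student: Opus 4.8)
The plan is to obtain both implications directly from \cref{AC-groups charactherization}, using torsion-freeness only to rule out non-trivial finite normal subgroups, so that the corollary becomes essentially a restatement of the equivalence (1)$\Leftrightarrow$(4) under the extra assumption.

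For the ``only if'' direction I would argue as follows: if $\Gamma$ is an AC-group, then part (2) of \cref{AC-groups charactherization} already exhibits a nilpotent subgroup of $\Gamma$ of finite index, namely $\Fitt(\Gamma)$ (equivalently, the subgroup $\Lambda$ appearing in part (3)). So this direction is immediate and does not even use the torsion-free hypothesis.

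For the ``if'' direction, suppose $\Gamma$ is torsion-free, virtually polycyclic, and contains a nilpotent subgroup of finite index. I would verify condition (4) of \cref{AC-groups charactherization}. Its first clause is precisely the hypothesis. For the second clause, recall that in a torsion-free group every non-trivial element has infinite order, so the only finite subgroup is the trivial one; in particular $\Gamma$ has no non-trivial finite normal subgroup. Since $\Gamma$ is virtually polycyclic, \cref{AC-groups charactherization} now applies and yields that $\Gamma$ is an AC-group, completing the argument.

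There is no genuine obstacle here: all the work is packaged inside \cref{AC-groups charactherization}, and the only role of the torsion-free assumption is the elementary observation that a torsion-free group contains no non-trivial finite subgroup, hence no non-trivial finite normal subgroup. The one point to be careful about is citing the appropriate clause of the characterisation theorem in each direction (part (2) or (3) for necessity, part (4) for sufficiency).
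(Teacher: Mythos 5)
Your proposal is correct and is exactly the intended argument: the paper states this corollary without proof as an immediate consequence of \cref{AC-groups charactherization}, and your use of the implication $(1)\Rightarrow(2)$ for necessity and of $(4)\Rightarrow(1)$ for sufficiency (noting that a torsion-free group has no non-trivial finite normal subgroup) is precisely what is meant.
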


To announce the next propositions we need some facts on the rational Mal'cev completion of a torsion-free finitely generated nilpotent group. We refer to \cite{dekimpe2006almost,dekimpe2016users} for its definition and main properties. Let $\Lambda$ be a torsion-free finitely generated nilpotent group and let $\Lambda_\Q$ denote its rational Mal'cev completion. Given $f\in \Aut(\Gamma)$, we can uniquely extend $f$ to an automorphism $f_\Q:\Lambda_\Q\longrightarrow \Lambda_\Q$ of the rational Mal'cev completion such that $f_{\Q|\Lambda}=f$ (see \cite[Proposition 2.7]{dekimpe2016users}). Thus, we can define an injective group morphism $\Aut(\Lambda)\longrightarrow\Aut(\Lambda_\Q)$ which induces a morphism $\Inn\Lambda\longrightarrow \Inn\Lambda_\Q$. Consequently, we have a group morphism $\Out(\Lambda)\longrightarrow \Out(\Lambda_\Q)$. Given a group morphism $\phi:G\longrightarrow \Out(\Lambda)$, we denote by $\phi_\Q:G\longrightarrow \Out(\Lambda_\Q)$ the composition of $\phi$ and $\Out(\Lambda)\longrightarrow \Out(\Lambda_\Q)$. The proof of \cref{AC-groups charactherization} uses the following proposition, which will be also used in the proof of \cref{main theorem10 intro}.

\begin{prop}\label{infranilmanifolds malcev completion injective}
	Let $\Lambda$ be a finitely generated torsion-free nilpotent group and $1\longrightarrow \Lambda\longrightarrow \Gamma\longrightarrow G\longrightarrow 1$ a group extension where $G$ is finite. Then $\Lambda$ is maximal nilpotent in $\Gamma$ if and only if the induced map $\phi_\Q:G\longrightarrow \Out(\Lambda_\Q)$ is injective.
\end{prop}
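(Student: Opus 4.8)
The plan is to analyze when the conjugation action of $\Gamma$ on $\Lambda$ produces enough automorphisms of the Mal'cev completion to detect all of $G$, and to show this is precisely the obstruction to $\Lambda$ being maximal nilpotent. First I would recall that any nilpotent subgroup $M$ with $\Lambda \leq M \leq \Gamma$ sits in a short exact sequence $1 \to \Lambda \to M \to G' \to 1$ with $G' \leq G$ finite, so $M$ is a finitely generated virtually nilpotent group; and $M$ is itself nilpotent if and only if it embeds into its own rational Mal'cev-type completion — more precisely, a finitely generated torsion-free nilpotent overgroup of $\Lambda$ of finite index must have rational Mal'cev completion equal to $\Lambda_\Q$ (uniqueness of the completion), so $M$ embeds into $\Lambda_\Q$ compatibly with $\Lambda \hookrightarrow \Lambda_\Q$. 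The key point is therefore: $\Lambda$ is \emph{not} maximal nilpotent iff there is an element $g \in \Gamma \setminus \Lambda$ whose conjugation action $c_g$ on $\Lambda$, extended to $c_{g,\Q}$ on $\Lambda_\Q$, is \emph{unipotent} (equivalently, coincides with an inner automorphism $c_\lambda$ of $\Lambda_\Q$ for some $\lambda \in \Lambda_\Q$), since then $\langle \Lambda, \lambda^{-1} g\rangle$ would be a larger nilpotent subgroup inside $\Lambda_\Q \rtimes (\text{something})$, realized inside $\Gamma$.

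Concretely, I would argue both directions. ($\Leftarrow$) Suppose $\phi_\Q$ is not injective, so some $g \in \Gamma$ with nontrivial image in $G$ has $c_{g,\Q} \in \Inn \Lambda_\Q$, say $c_{g,\Q} = c_{\mu}$ with $\mu \in \Lambda_\Q$. Consider the subgroup $H = \langle \Lambda, g\rangle \leq \Gamma$; inside $\Aff$-type coordinates $\Lambda_\Q \rtimes \Aut(\Lambda_\Q)$, conjugation by $g$ on the $\Lambda_\Q$ factor is inner, so after translating $g$ by $\mu^{-1}$ we get an element centralizing $\Lambda_\Q$ up to the pure translation part, and one checks $H$ lies in a unipotent — hence nilpotent — subgroup of $\Lambda_\Q$ (using that a torsion-free group which is an extension of a nilpotent group by a finite group acting unipotently on the completion is itself nilpotent, via a Lie-theoretic/semisimple-unipotent decomposition argument). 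This produces a nilpotent subgroup strictly larger than $\Lambda$, contradicting maximality. ($\Rightarrow$) Conversely, if $\Lambda$ is maximal nilpotent and $g \notin \Lambda$, suppose for contradiction $c_{g,\Q}$ is inner on $\Lambda_\Q$; then the same construction builds a torsion-free nilpotent overgroup of $\Lambda$ in $\Gamma$ strictly containing $\Lambda$ (one must check torsion-freeness, which follows since $\Gamma$ has no finite normal subgroups when $\Lambda$ is maximal nilpotent, or directly from the ambient structure), again a contradiction; and since $\phi_\Q(\bar g)$ trivial in $\Out(\Lambda_\Q)$ means exactly $c_{g,\Q}$ inner, injectivity of $\phi_\Q$ follows. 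The cleanest packaging uses the fact that an automorphism of finite order of $\Lambda_\Q$ is inner iff it is unipotent iff it is the identity (since $\Lambda_\Q$ is a uniquely divisible nilpotent group, a finite-order inner automorphism is trivial), so $\phi_\Q$ injective $\iff$ no nontrivial element of $G$ acts unipotently on $\Lambda_\Q$.

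The main obstacle I anticipate is the direction ($\Leftarrow$): turning an \emph{a priori} abstract failure of injectivity (some $c_{g,\Q}$ inner) into an honest nilpotent \emph{subgroup of $\Gamma$} strictly larger than $\Lambda$. This requires care because $\mu$ lives in $\Lambda_\Q$, not $\Lambda$, so $\mu^{-1} g$ need not lie in $\Gamma$; one must instead pass to a suitable power or use that $\langle \Lambda, g \rangle$, having unipotent action on $\Lambda_\Q$, embeds into the Mal'cev completion of a nilpotent group and hence is itself (torsion-free) nilpotent. Establishing that embedding — essentially that an extension of a f.g.\ torsion-free nilpotent group by a finite group acting unipotently (after rationalization) on the nilpotent part is virtually... in fact \emph{actually} nilpotent once torsion-free — is the technical heart. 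I would invoke the standard structure theory from \cite{dekimpe2006almost} for Mal'cev completions and the unipotent/semisimple (Jordan) decomposition of automorphisms of nilpotent Lie groups, reducing the claim to: a finite-order unipotent automorphism is trivial. The remaining pieces — uniqueness of the rational completion, the correspondence between $\Out(\Lambda)\to\Out(\Lambda_\Q)$ and conjugation data, torsion-freeness bookkeeping — are routine given \cref{generalized Bieberbach theorems} and the cited properties of $\Lambda_\Q$.
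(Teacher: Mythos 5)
The paper does not actually write out a proof of this proposition: it cites \cite[Lemma 3.1.1]{dekimpe2006almost} and remarks that the real-completion argument there carries over to $\Lambda_\Q$. Your skeleton matches that standard argument (relate inner-ness of $c_{g,\Q}$ on $\Lambda_\Q$ to the existence of nilpotent subgroups strictly between $\Lambda$ and $\Gamma$), but as written there is a genuine gap coming from torsion. Maximality here is among \emph{all} nilpotent subgroups, and a nilpotent $M$ with $\Lambda\lneq M\leq\Gamma$ need not be torsion-free (take $\Gamma=\Lambda\times\Z/2$ and $M=\Gamma$). Such an $M$ cannot embed into $\Lambda_\Q$, nor into ``the Mal'cev completion of a nilpotent group'', so both places where you invoke that mechanism break: in the direction ``not maximal $\Rightarrow$ not injective'' you cannot see $c_{m,\Q}$ as conjugation by an element of $M\subseteq\Lambda_\Q$; and in the direction ``not injective $\Rightarrow$ not maximal'' you cannot certify that $H=\langle\Lambda,g\rangle$ is nilpotent by embedding it into a torsion-free divisible group, since $H$ may contain torsion. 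Your fallback --- torsion-freeness of $H$ because $\Gamma$ has no finite normal subgroups --- is not a valid inference: $\langle\Lambda,g\rangle$ can have torsion without $\Gamma$ having a finite normal subgroup (the infinite dihedral group already shows this).

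Both halves are repairable without any torsion-freeness. For ``not maximal $\Rightarrow$ not injective'': the torsion elements of the finitely generated nilpotent group $M$ form a finite normal subgroup $T$ with $T\cap\Lambda=1$, so $\Lambda$ sits with finite index in the torsion-free nilpotent group $M/T$, whence $(M/T)_\Q=\Lambda_\Q$ and each $c_{m,\Q}$ is conjugation by the image of $m$ in $M/T\subseteq\Lambda_\Q$, hence inner. For ``not injective $\Rightarrow$ not maximal'': pick $g$ with $1\neq\bar g\in\ker\phi_\Q$ and $c_{g,\Q}=c_\mu$, set $H=\langle\Lambda,g\rangle$, and form the pushout $H_\Q=(\Lambda_\Q\rtimes H)/\{(\lambda^{-1},\lambda):\lambda\in\Lambda\}$; the class of $(\mu^{-1},g)$ there is an honest central element, so $H_\Q=\Lambda_\Q\cdot Z(H_\Q)$ is nilpotent and $H\leq H_\Q$ is the desired nilpotent overgroup, torsion or not. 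Finally, drop the identification of ``unipotent'' with ``inner'': these coincide only for finite-order automorphisms, and $c_{g,\Q}$ itself (as opposed to its class in $\Out(\Lambda_\Q)$) need not have finite order; the condition you actually have, and the only one you need, is that $c_{g,\Q}$ is inner in $\Lambda_\Q$.
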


This proposition is proved in \cite[Lemma 3.1.1]{dekimpe2006almost} for the real Mal'cev completion, and the same argument can be used to prove \cref{infranilmanifolds malcev completion injective} with the rational Mal'cev completion. It can be also reformulated in the following way:

\begin{prop}\label{infranilmanifolds malcev completion trivial}
	Let $\Lambda$ be a finitely generated torsion-free nilpotent group and $1\longrightarrow \Lambda\longrightarrow \Gamma\longrightarrow G\longrightarrow 1$ a group extension where $G$ is finite and $\Gamma$ is torsion-free. Then $\Gamma$ is nilpotent if and only if the induced map $\phi_\Q:G\longrightarrow \Out(\Lambda_\Q)$ is trivial.
\end{prop}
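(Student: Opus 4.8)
The plan is to derive \Cref{infranilmanifolds malcev completion trivial} from \Cref{infranilmanifolds malcev completion injective}, with the Fitting subgroup $\Lambda':=\Fitt(\Gamma)$ playing the role of the maximal nilpotent subgroup, and using that a finite-index inclusion of finitely generated torsion-free nilpotent groups induces an identification of their rational Mal'cev completions.

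First I would handle the easy implication: if $\Gamma$ is nilpotent, then since $\Gamma$ is torsion-free and contains $\Lambda$ with finite index, $\Gamma_\Q$ is a rational Mal'cev completion of $\Lambda$, so $\Lambda_\Q=\Gamma_\Q$. For $g\in\Gamma$, conjugation $c_g$ on $\Gamma_\Q$ (with $g$ viewed inside $\Gamma\subseteq\Gamma_\Q$) restricts on $\Lambda$ to $c_g|_\Lambda$; by uniqueness of the extension of an automorphism of $\Lambda$ to $\Lambda_\Q$ we get $(c_g|_\Lambda)_\Q=c_g\in\Inn(\Lambda_\Q)$, hence $\phi_\Q(g\Lambda)=1$ in $\Out(\Lambda_\Q)$. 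Thus $\phi_\Q$ is trivial.

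For the converse, suppose $\phi_\Q$ is trivial. As $\Gamma$ is torsion-free and virtually polycyclic (a finite extension of the polycyclic group $\Lambda$), \Cref{AC-groups charactherization} shows $\Gamma$ is an AC-group and its Fitting subgroup $\Lambda'$ is torsion-free, maximal nilpotent, and of finite index; moreover $\Lambda\le\Lambda'$ (being normal nilpotent), so $\Lambda'$ is finitely generated and $\Lambda'_\Q=\Lambda_\Q$. Let $\psi_\Q\colon G'\to\Out(\Lambda'_\Q)$ be the map induced by $1\to\Lambda'\to\Gamma\to G'\to 1$ with $G'=\Gamma/\Lambda'$ finite. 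By maximality of $\Lambda'$ and \Cref{infranilmanifolds malcev completion injective}, $\psi_\Q$ is injective. But for each $g\in\Gamma$, uniqueness of the Mal'cev extension forces $(c_g|_{\Lambda'})_\Q=(c_g|_\Lambda)_\Q$ inside $\Aut(\Lambda_\Q)=\Aut(\Lambda'_\Q)$ (both restrict to $c_g|_\Lambda$ on $\Lambda$), so the composite $\Gamma\twoheadrightarrow G'\xrightarrow{\psi_\Q}\Out(\Lambda'_\Q)$ agrees with $\Gamma\twoheadrightarrow G\xrightarrow{\phi_\Q}\Out(\Lambda_\Q)$, which is trivial by hypothesis. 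Since $\Gamma\to G'$ is onto, $\psi_\Q$ is trivial; combined with injectivity, $G'=1$, i.e. $\Gamma=\Fitt(\Gamma)$ is nilpotent.

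I expect the only subtle point to be the bookkeeping around the Mal'cev completion: checking that $\Lambda_\Q$ and $\Lambda'_\Q$ are canonically identified and that the two conjugation-induced maps into $\Out$ of this common completion literally coincide. Both follow from the universal/uniqueness property of the rational Mal'cev completion already recalled before \Cref{infranilmanifolds malcev completion injective}, so no new input is needed; the remainder is a formal combination of \Cref{AC-groups charactherization} and \Cref{infranilmanifolds malcev completion injective}.
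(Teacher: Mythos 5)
Your proof is correct, and it takes the route the paper itself intends: the paper states this proposition merely as a ``reformulation'' of \cref{infranilmanifolds malcev completion injective} without writing out the argument, and your derivation via $\Fitt(\Gamma)$, the characterization in \cref{AC-groups charactherization}, and the canonical identification $\Lambda_\Q\cong\Fitt(\Gamma)_\Q$ is exactly the missing bookkeeping. The two subtle points you flag (the identification of the completions and the agreement of the two conjugation-induced maps into $\Out$ of the common completion) are handled correctly by the uniqueness of Mal'cev extensions, so nothing further is needed.
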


Recall that $\Lambda_\Q$ has associated a finite dimensional rational Lie algebra $\mathcal{L}(\Lambda)_\Q$ (for the details, see \cite[Theorem 2.11, Theorem 2.12]{dekimpe2016users}).

\begin{lem}\cite[Corollary 2.13]{dekimpe2016users}\label{automorphisms malcev completion}
	Let $\Lambda$ be a finitely generated torsion-free nilpotent group. Then $\Aut(\Gamma_\mathbb{Q})\cong\Aut({\mathcal{L}(\Gamma)}_\mathbb{Q})$.
\end{lem}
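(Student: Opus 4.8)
The plan is to deduce the isomorphism from the Mal'cev (Baker--Campbell--Hausdorff) correspondence. Recall from \cite[Theorem 2.11, Theorem 2.12]{dekimpe2016users} that, as a set, $\Lambda_\Q$ is the underlying $\Q$-vector space of the finite-dimensional nilpotent Lie algebra $\mathcal{L}(\Lambda)_\Q$, equipped with the group law given by the Baker--Campbell--Hausdorff product $x\ast y=x+y+\tfrac12[x,y]+\cdots$, which is a \emph{finite} sum because $\mathcal{L}(\Lambda)_\Q$ is nilpotent; write $\exp\colon\mathcal{L}(\Lambda)_\Q\to\Lambda_\Q$ for this identification. Since any Lie algebra automorphism of $\mathcal{L}(\Lambda)_\Q$ is $\Q$-linear and bracket-preserving, it preserves $\ast$ and hence is a group automorphism of $\Lambda_\Q$; this gives an injective group homomorphism $\Phi\colon\Aut(\mathcal{L}(\Lambda)_\Q)\to\Aut(\Lambda_\Q)$. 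What remains is to show $\Phi$ is surjective, i.e.\ that every group automorphism $f$ of $\Lambda_\Q$ is $\Q$-linear and bracket-preserving as a self-map of the set $\mathcal{L}(\Lambda)_\Q$.

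For surjectivity I would first note $\Q$-homogeneity: for a single element $g\in\Lambda_\Q$ one has $g^{n}=n\cdot g$ in $\mathcal{L}(\Lambda)_\Q$, as every higher BCH term contains a bracket $[g,g]=0$, and $\Lambda_\Q$ is uniquely divisible; hence $f(qg)=q\,f(g)$ for all $q\in\Q$. Additivity and compatibility with the bracket I would establish by induction on the nilpotency class $c$ of $\mathcal{L}(\Lambda)_\Q$, using that $\exp$ carries $\gamma_i(\mathcal{L}(\Lambda)_\Q)$ onto the $i$-th lower central series term of $\Lambda_\Q$, which being characteristic is preserved by $f$; so $f$ induces group automorphisms on each subquotient. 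The case $c=1$ is trivial since then $\ast=+$. For the inductive step, setting $u=(\tfrac1n x)\ast(\tfrac1n y)$ and using $u^{n}=n\cdot u$, the BCH expansion gives $u^{n}=x+y+\tfrac{1}{2n}[x,y]+(\text{terms in }\gamma_3\text{ carrying higher powers of }1/n)$; applying the homomorphism $f$ together with $\Q$-homogeneity yields $f\big(x+y+\tfrac{1}{2n}[x,y]+\cdots\big)=f(x)+f(y)+\tfrac{1}{2n}[f(x),f(y)]+\cdots$, and the analogous identity for the group commutator $[\exp x,\exp y]$ gives $f\big([x,y]+\cdots\big)=[f(x),f(y)]+\cdots$ with corrections in $\gamma_3$. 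Feeding in the inductive hypothesis applied to $\mathcal{L}(\Lambda)_\Q/\gamma_c$ and to $\gamma_2$ — both of strictly smaller class — one strips off the correction terms along the filtration $\gamma_2\supseteq\gamma_3\supseteq\cdots$ and concludes $f(x+y)=f(x)+f(y)$ and $f([x,y])=[f(x),f(y)]$. Thus $f\in\Aut(\mathcal{L}(\Lambda)_\Q)$ and $\Phi$ is an isomorphism.

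I expect the one genuinely technical point to be the filtration bookkeeping in the inductive step, which is exactly the assertion that the functor $\mathfrak{n}\mapsto\exp\mathfrak{n}$ is full and faithful on finite-dimensional rational nilpotent Lie algebras; one may therefore instead simply invoke this classical form of the Mal'cev correspondence. An alternative is to extend scalars to $\R$: then $\Lambda_\R$ is a simply connected nilpotent Lie group with Lie algebra $\mathcal{L}(\Lambda)_\Q\otimes_\Q\R$, the classical isomorphism $\Aut(\Lambda_\R)\cong\Aut(\mathcal{L}(\Lambda)_\Q\otimes_\Q\R)$ (differentiate an automorphism, and conversely exponentiate) restricts to an isomorphism between the automorphisms of $\Lambda_\R$ preserving the $\Q$-form $\Lambda_\Q$ and the $\R$-linear automorphisms of $\mathcal{L}(\Lambda)_\Q\otimes_\Q\R$ preserving the $\Q$-form $\mathcal{L}(\Lambda)_\Q$, and these two groups are canonically $\Aut(\Lambda_\Q)$ and $\Aut(\mathcal{L}(\Lambda)_\Q)$; but identifying $\Aut(\Lambda_\Q)$ with the former already needs the $\Q$-linearity of a rational group automorphism under $\log$, so this route does not really sidestep the main point.
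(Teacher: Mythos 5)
The paper gives no proof of this lemma---it is quoted verbatim from Dekimpe's users' guide---and your argument is the standard Mal'cev/BCH-correspondence proof that the cited source itself relies on, so it is essentially the same approach. You correctly isolate the only delicate point (that the inductive stripping of correction terms along the lower central series is precisely the fullness of the functor $\mathfrak{n}\mapsto\exp\mathfrak{n}$ on rational nilpotent Lie algebras), and invoking that classical fact is a legitimate way to close the argument.
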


\subsection{Groups action on aspherical manifolds}

Let $M$ be a closed connected manifold and let $x_0\in M$. Assume that we have a finite group $G$ acting on $M$. Then for each $g\in G$ we have an isomorphism ${g_*}:\pi_1(M,x_0)\longrightarrow \pi_1(M,gx_0)$. If $x_0$ is fixed by the action of $G$ on $M$, then we can define a group morphism $G\longrightarrow \Aut(\pi_1(M,x_0))$. If $x_0$ is not fixed by the action of $G$ on $M$ then above group morphism is not well defined. However, since $\pi_1(M,x_0)$ and $\pi_1(M,gx_0)$ are isomorphic and the isomorphism is given by a conjugation, we have a well-defined group morphism $\psi:G\longrightarrow \Out(\pi_1(M,x_0))=\Aut(\pi_1(M,x_0))/\Inn(\pi_1(M,x_0))$ given by $\psi(g)=[{g_*}:\pi_1(M,x_0)\longrightarrow \pi_1(M,gx_0)]$. We will say that the action is inner if $\psi:G\longrightarrow \Out(\pi_1(M,x_0))$ is trivial. We will omit the base point whenever it is not necessary for the discussion.

We can always lift the action of $G$ on $M$ to an action of a group $\tilde{G}$ on the universal cover $\tilde{M}$, where the group $\tilde{G}$ fits into the short exact sequence
\[\begin{tikzcd}
	1\ar{r}{}& \pi_1(M)\ar{r}{} & \tilde{G} \ar{r}{}& G\ar{r}{} & 1.
\end{tikzcd}\]
The abstract kernel of the group extension coincides with the group morphism $\psi$.

\begin{lem}\cite[Lemma 3.1.14]{lee2010seifert}\label{aspherical manifolds big diagram}
	Let $G$ be a finite group acting effectively on a closed manifold $M$. Then, there is a commutative diagram with exact rows and columns
	\[\begin{tikzcd}
		& 1\ar{d}{} & 1 \ar{d}{}& 1 \ar{d}{} & \\
		1\ar{r}{}& \Zc\pi_1(M)\ar{r}{}\ar{d}{} & C_{\tilde{G}}(\pi_1(M)) \ar{r}{}\ar{d}{}& \ker\psi\ar{r}{}\ar{d}{} & 1\\
		1\ar{r}{}& \pi_1(M)\ar{r}{}\ar{d}{} & \tilde{G} \ar{r}{}\ar{d}{}& G\ar{r}{}\ar{d}{\psi} & 1\\
		1\ar{r}{}& \Inn(\pi_1(M))\ar{r}{}\ar{d}{} & \Aut(\pi_1(M)) \ar{r}{}& \Out(\pi_1(M))\ar{r}{} & 1\\
		&1 &&&
	\end{tikzcd}\]
	where $C_{\tilde{G}}(\pi_1(M))$ is the centralizer of $\pi_1(M)$ in $\tilde{G}$.
\end{lem}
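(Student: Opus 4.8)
The plan is to take the middle row $1\to\pi_1(M)\to\tilde G\to G\to1$ as given — it is the extension arising from lifting the $G$-action to the universal cover $\tilde M$, as recalled just above — and to build the rest of the diagram from it. First I would write down the three vertical maps: $\pi_1(M)\to\Inn\pi_1(M)$, $\gamma\mapsto c_\gamma$ (conjugation), which is surjective with kernel $\Zc\pi_1(M)$; the map $\tilde G\to\Aut\pi_1(M)$, $\tilde g\mapsto c_{\tilde g}|_{\pi_1(M)}$, which is well defined precisely because $\pi_1(M)$ is normal in $\tilde G$ and whose kernel is by definition the centraliser $C_{\tilde G}(\pi_1(M))$; and $G\to\Out\pi_1(M)$, namely the abstract kernel $\psi$, with kernel $\ker\psi$. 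The two lower squares commute by construction: the left one because both composites $\pi_1(M)\to\tilde G\to\Aut\pi_1(M)$ and $\pi_1(M)\to\Inn\pi_1(M)\to\Aut\pi_1(M)$ equal $\gamma\mapsto c_\gamma$, and the right one because $\psi(g)$ is by definition the class in $\Out\pi_1(M)$ of $c_{\tilde g}|_{\pi_1(M)}$ for any lift $\tilde g$ of $g$, which is exactly the image of $c_{\tilde g}|_{\pi_1(M)}$ under $\Aut\pi_1(M)\to\Out\pi_1(M)$.

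The lower two rows then form a morphism of short exact sequences, and the top row is obtained by applying the snake lemma (valid here despite non-abelianness because the only cokernel in play, $\Inn\pi_1(M)$ modulo the image of $\pi_1(M)$, is trivial — that map is onto). Concretely: restricting $\tilde G\to G$ to $C_{\tilde G}(\pi_1(M))$ has kernel $C_{\tilde G}(\pi_1(M))\cap\pi_1(M)$, i.e.\ the elements of $\pi_1(M)$ that centralise $\pi_1(M)$, which is exactly $\Zc\pi_1(M)$; and it maps onto $\ker\psi$ because, for $g\in\ker\psi$ with any lift $\tilde g$, the restriction $c_{\tilde g}|_{\pi_1(M)}$ is inner, say $c_\gamma$ with $\gamma\in\pi_1(M)$, so that $\tilde g\gamma^{-1}\in C_{\tilde G}(\pi_1(M))$ still projects to $g$. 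This gives the short exact sequence $1\to\Zc\pi_1(M)\to C_{\tilde G}(\pi_1(M))\to\ker\psi\to1$; the two upper squares commute automatically, since all their arrows are restrictions of the middle-row arrows along the inclusions $\Zc\pi_1(M)\hookrightarrow\pi_1(M)$, $C_{\tilde G}(\pi_1(M))\hookrightarrow\tilde G$ and $\ker\psi\hookrightarrow G$.

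It remains to record the columns: the left one, $1\to\Zc\pi_1(M)\to\pi_1(M)\to\Inn\pi_1(M)\to1$, is exact since $\Inn\pi_1(M)=\pi_1(M)/\Zc\pi_1(M)$; the middle one, $1\to C_{\tilde G}(\pi_1(M))\to\tilde G\to\Aut\pi_1(M)$, and the right one, $1\to\ker\psi\to G\xrightarrow{\psi}\Out\pi_1(M)$, are exact at their first two terms by the definitions of the centraliser and of $\ker\psi$. Assembling these pieces gives the diagram. I expect the one step that truly needs care to be the surjectivity $C_{\tilde G}(\pi_1(M))\twoheadrightarrow\ker\psi$ — equivalently, the vanishing of the snake connecting homomorphism — everything else being a routine chase; a secondary point to keep honest is that $\tilde G\to\Aut\pi_1(M)$ is well defined, which is precisely the normality of $\pi_1(M)$ in $\tilde G$ furnished by the construction of the lifted action on $\tilde M$.
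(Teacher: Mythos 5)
Your proof is correct. The paper does not prove this lemma — it is quoted directly from \cite[Lemma 3.1.14]{lee2010seifert} — and your argument is the standard one: the only point requiring any work is the surjectivity of $C_{\tilde G}(\pi_1(M))\to\ker\psi$, which you handle correctly by adjusting a lift $\tilde g$ of $g\in\ker\psi$ by the element $\gamma\in\pi_1(M)$ realizing the inner automorphism $c_{\tilde g}|_{\pi_1(M)}$.
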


\Cref{aspherical manifolds big diagram} can be used to prove the following result.

\begin{thm}\cite[Theorem 3.1.16]{lee2010seifert}\label{finite group actions aspherical manifolds}
	Let $G$ be a finite group acting effectively on a closed connected $n$-dimensional aspherical manifold $M$, then:
	\begin{itemize}
		\item[1.] $C_{\tilde{G}}(\pi_1(M))$ is torsion free.
		\item[2.] $\ker \psi$ is abelian. Moreover, if $\Zc\pi_1(M)$ is finitely generate of rank $k$, then $\ker\psi$ is a subgroup of the torus $T^k$.
	\end{itemize}
\end{thm}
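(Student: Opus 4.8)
\emph{Setup and part (1).} Write $\Gamma=\pi_1(M)$ and use $\tilde G$, $\tilde M$, $\psi$ as set up above; recall $\tilde M$ is a contractible $n$-manifold (universal cover of an aspherical manifold), that $\Gamma\le\tilde G$ acts on it as deck transformations (freely, properly discontinuously, cocompactly), and that the $\tilde G$-action on $\tilde M$ is effective (an element acting trivially on $\tilde M$ descends to the identity of $M$, hence lies in $\Gamma$, hence is trivial since deck transformations act freely). We may assume $n\ge 1$, as $n=0$ forces $G=1$. Put $C=C_{\tilde G}(\Gamma)$ and suppose, for contradiction, that $C$ contains a nontrivial torsion element; then it contains one, $c$, of prime order $p$, and $\langle c\rangle\cong\Z/p$ acts on $\tilde M$. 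I would study the fixed set $\tilde M^{c}$. As $\tilde M$ is contractible and finitistic (a paracompact finite-dimensional manifold), Smith theory gives that $\tilde M^{c}$ is $\mathbb{F}_p$-acyclic, in particular nonempty, connected and closed in $\tilde M$; and since $c$ centralizes $\Gamma$, the set $\tilde M^{c}$ is $\Gamma$-invariant, so $\Gamma$ acts on it freely and properly discontinuously with $Y:=\tilde M^{c}/\Gamma$ a closed, hence compact, subset of $M=\tilde M/\Gamma$ and $\tilde M^{c}\to Y$ a regular $\Gamma$-cover. Because the fibre $\tilde M^{c}$ is $\mathbb{F}_p$-acyclic, the Cartan--Leray spectral sequence of this cover (used in \v{C}ech/Alexander--Spanier cohomology so as to sidestep any local pathology of $\tilde M^{c}$) collapses to give $\check H^{*}(Y;\mathbb{F}_p)\cong H^{*}(\Gamma;\mathbb{F}_p)\cong H^{*}(M;\mathbb{F}_p)$, the last isomorphism since $M$ is aspherical. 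If $M$ is $\mathbb{F}_p$-orientable (automatic for $p=2$), then $\check H^{n}(Y;\mathbb{F}_p)\cong H^{n}(M;\mathbb{F}_p)\cong\mathbb{F}_p\neq 0$; but a proper compact subset of a connected $n$-manifold has vanishing top \v{C}ech $\mathbb{F}_p$-cohomology, as it lies inside the complement of a point, a connected noncompact $n$-manifold every open subset of which has vanishing $n$-th cohomology. Hence $Y=M$, so $\tilde M^{c}=\tilde M$, i.e.\ $c$ acts trivially on $\tilde M$, contradicting effectiveness. When $M$ is not $\mathbb{F}_p$-orientable ($p$ odd) I would first pass to the orientation double cover, which has the same universal cover $\tilde M$ and to which $c$ descends as a nontrivial order-$p$ homeomorphism centralizing the fundamental group, reducing to the orientable case. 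This proves (1).

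\emph{Part (2).} For (2) I would work only in the top row of \cref{aspherical manifolds big diagram}, $1\to\Zc\Gamma\to C\to\ker\psi\to 1$, where $\ker\psi$ is finite (a subgroup of $G$), $C$ is torsion-free by (1), and $Z:=\Zc\Gamma$ is central in $C$ (it lies in $\Gamma$, which $C$ centralizes). Let $m=|\ker\psi|$. Centrality of $Z$ makes the commutator map descend to a biadditive map $\beta\colon\ker\psi\times\ker\psi\to Z$, and $[C,C]$ is the subgroup of $Z$ generated by the finitely many values of $\beta$; since $m\,\beta(\bar c,\bar c')=\beta(m\bar c,\bar c')=0$, each such value has order dividing $m$, so $[C,C]$ is a finite subgroup of the torsion-free group $C$ and hence trivial. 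Thus $C$ is abelian, and so is $\ker\psi=C/Z$. If moreover $Z\cong\Z^{k}$, then $C$ is a torsion-free abelian group containing $\Z^{k}$ with finite index, so $C\cong\Z^{k}$; realizing $Z$ as a lattice in $\R^{k}$, its finite-index overgroup $C$ is again a lattice and $\ker\psi=C/Z$ embeds into $\R^{k}/Z=T^{k}$.

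\emph{Main obstacle.} The crux is the fixed-point analysis in (1); the remainder is diagram-chasing and elementary group theory. The delicate issues there are the applicability of Smith theory (fine, as manifolds are finitistic), the possible wildness of the topological fixed set $\tilde M^{c}$ (circumvented by working with \v{C}ech cohomology throughout), and the orientability reduction. No use is made of $\tilde M\cong\R^{n}$; only that $\tilde M$ is a contractible $n$-manifold.
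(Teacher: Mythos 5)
The paper does not reprove this theorem; it is quoted from Lee--Raymond, so I am comparing your attempt with the standard proof in that source. Your part (1) follows exactly that route: Smith theory applied to the fixed set of a prime-order element of $C_{\tilde G}(\pi_1(M))$, the Cartan--Leray / Borel-construction identification of the Čech cohomology of the $\Gamma$-quotient of the $\mathbb{F}_p$-acyclic fixed set with $H^*(\pi_1(M);\mathbb{F}_p)$, and the non-vanishing of the top cohomology after passing to the orientation cover. The technical caveats you flag (finitisticness, tautness of compact subsets, working in Čech/Alexander--Spanier cohomology) are the standard ones and are handled correctly; part (1) is fine.

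Part (2), however, contains a circular step. You assert that ``centrality of $Z$ makes the commutator map descend to a biadditive map $\beta\colon\ker\psi\times\ker\psi\to Z$.'' Centrality of $Z=\Zc\pi_1(M)$ in $C=C_{\tilde G}(\pi_1(M))$ only gives that $[c,c']$ depends on the cosets $cZ$, $c'Z$; it does \emph{not} give that $[c,c']$ lies in $Z$, nor that the map is biadditive. Indeed, $[C,C]\subseteq Z$ is equivalent to $C/Z\cong\ker\psi$ being abelian, which is precisely what you are trying to prove, and biadditivity of the commutator map requires the commutators to be central --- so as written the argument presupposes its conclusion. The repair is Schur's theorem: $Z$ is a central subgroup of finite index $|\ker\psi|$ in $C$, hence $[C,C]$ is finite; since $C$ is torsion-free by part (1), $[C,C]=1$, so $C$ is abelian and therefore so is $\ker\psi=C/Z$. (This is also how the cited source argues.) The rest of your part (2) --- that $C\cong\Z^k$ when $Z\cong\Z^k$, and that $\ker\psi=C/Z$ embeds in $\R^k/Z=T^k$ --- is correct once abelianness of $C$ is established.
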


\section{Free iterated actions on closed aspherical manifolds}\label{sec: iterated actions on aspherical manifolds}

The aim of this section is to prove \cref{main theorem9 intro}. To ease notation, we introduce the following definition:

\begin{defn}\label{2-step Minkowski}
	If $\Gamma$ is a finitely generated group satisfying that $\Zc\Gamma$ and $\Zc\Inn\Gamma$ are finitely generated and that $\Out(\Inn\Gamma)$ and $\Aut(\Inn\Gamma)$ are Minkowski then we say that $\Gamma$ has the 2-step Minkowski property.
\end{defn}

\begin{rem}\label{2-step Minkowski 1-cocycles}
	Assume that $\Gamma$ is 2-step Minkowski. Then $\Zc\Gamma$ and $\Inn\Gamma$ are finitely generated and therefore the group of closed 1-cocycles $Z^1(\Inn\Gamma,\Zc\Gamma)$ is a finitely generated abelian group, and hence $Z^1(\Inn\Gamma,\Zc\Gamma)$ is Minkowski.
\end{rem}

\begin{lem}
	Assume that $\Gamma$ is torsion-free and 2-step Minkowski. Then $\Out(\Gamma)$ and $\Aut(\Gamma)$ are Minkowski.
\end{lem}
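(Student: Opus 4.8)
The plan is to apply the Malfait exact sequences of \Cref{out ses} and \Cref{aut ses} to the central extension
\[
1\longrightarrow \Zc\Gamma\longrightarrow \Gamma\longrightarrow \Inn\Gamma\longrightarrow 1,
\]
taking $K=\Zc\Gamma$, $G=\Gamma$ and $Q=\Inn\Gamma$, and then to observe that, under the hypotheses of the lemma, \emph{every} group occurring in these sequences is Minkowski, so that repeated application of \Cref{Minkowski ses} gives the conclusion.

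First I would record the simplifications coming from the fact that this is a \emph{central} extension. Since $\Zc\Gamma$ is characteristic in $\Gamma$, the remark following \Cref{aut ses} gives $\Aut(\Gamma,\Zc\Gamma)=\Aut(\Gamma)$ and $\Out(\Gamma,\Zc\Gamma)=\Out(\Gamma)$. Since $\Zc\Gamma$ is central, conjugation by any element of $\Gamma$ restricts to the identity on $\Zc\Gamma$, so the abstract kernel $\psi\colon\Inn\Gamma\to\Out(\Zc\Gamma)=\Aut(\Zc\Gamma)$ is trivial; consequently $Z^1_\psi(\Inn\Gamma,\Zc\Gamma)=\Hom(\Inn\Gamma,\Zc\Gamma)$, and in the descriptions given by \Cref{out ses} and \Cref{aut ses} the terms $L_2$ and $L_2'$ are isomorphic to subgroups of $\Aut(\Zc\Gamma)$ (because $\Inn(\Zc\Gamma)=1$ and $\psi(\Inn\Gamma)$ is trivial, all the quotienting and centralizer terms in those formulas collapse). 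As $\Gamma$ is torsion-free, $\Zc\Gamma$ is finitely generated torsion-free abelian, so $\Zc\Gamma\cong\Z^{k}$ and $\Aut(\Zc\Gamma)\cong\Gl(k,\Z)$, which is Minkowski by Minkowski's theorem; hence $L_2$ and $L_2'$ are Minkowski. Moreover $\Inn\Gamma$ is finitely generated (a quotient of $\Gamma$) and $\Zc\Gamma$ is finitely generated, so $Z^1(\Inn\Gamma,\Zc\Gamma)=\Hom(\Inn\Gamma,\Zc\Gamma)$ is a finitely generated abelian group (this is \Cref{2-step Minkowski 1-cocycles}), hence Minkowski, and so is its quotient $\overline{H}^1_\psi(\Inn\Gamma,\Zc\Gamma)$, which is again finitely generated abelian.

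Next I would assemble the two towers of extensions. For the outer automorphism group: the second exact sequence of \Cref{out ses} reads $1\to\overline{H}^1_\psi(\Inn\Gamma,\Zc\Gamma)\to\mathcal{K}\to L_2\to1$ with both outer terms Minkowski, so $\mathcal{K}$ is Minkowski by \Cref{Minkowski ses}; the first exact sequence reads $1\to\mathcal{K}\to\Out(\Gamma)\to L_1\to1$ with $L_1\le\Out(\Inn\Gamma)$, which is Minkowski by hypothesis, so $\Out(\Gamma)$ is Minkowski, again by \Cref{Minkowski ses}. For the automorphism group the argument is identical with \Cref{aut ses}: $1\to Z^1_\psi(\Inn\Gamma,\Zc\Gamma)\to\mathcal{K}'\to L_2'\to1$ has both outer terms Minkowski, so $\mathcal{K}'$ is Minkowski, and $1\to\mathcal{K}'\to\Aut(\Gamma)\to L_1'\to1$ with $L_1'\le\Aut(\Inn\Gamma)$ Minkowski gives that $\Aut(\Gamma)$ is Minkowski.

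The only point that demands care — rather than a genuine obstacle — is the first step: one must verify that the abstract kernel of the central extension is trivial, so that Malfait's terms $L_2$, $L_2'$, $Z^1_\psi$ reduce to the elementary objects above and, in particular, embed into $\Aut(\Zc\Gamma)\cong\Gl(k,\Z)$. Everything else is bookkeeping with \Cref{Minkowski ses}, whose hypotheses hold at each step precisely because $\Zc\Gamma$ is finitely generated (which forces the relevant $1$-cocycle groups to be finitely generated abelian, hence Minkowski) and because $\Aut(\Inn\Gamma)$ and $\Out(\Inn\Gamma)$ are Minkowski by assumption.
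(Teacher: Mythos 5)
Your proposal is correct and follows essentially the same route as the paper: apply Malfait's two exact sequences (\Cref{out ses} and \Cref{aut ses}) to the central extension $1\to\Zc\Gamma\to\Gamma\to\Inn\Gamma\to 1$, observe that $L_1,L_1'$ land in $\Out(\Inn\Gamma),\Aut(\Inn\Gamma)$ (Minkowski by hypothesis), that $L_2,L_2'$ land in $\Aut(\Zc\Gamma)\cong\Gl(k,\Z)$, and that the cocycle terms are finitely generated abelian, then conclude with \Cref{Minkowski ses}. Your extra remarks on the triviality of the abstract kernel and the collapse of Malfait's formulas are just a more explicit justification of steps the paper leaves implicit.
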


\begin{proof}
	We consider the central short exact sequence $1\longrightarrow \Zc\Gamma\longrightarrow \Gamma\longrightarrow \Inn\Gamma\longrightarrow 1$. We know that $\Out(\Zc\Gamma)=\Aut(\Zc\Gamma)=\Gl(n,\Z)$ for some $n$, hence $\Out(\Zc\Gamma)$ is Minkowski. Moreover, $H^1(\Inn\Gamma,\Zc\Gamma)$ is Minkowski because $\Inn\Gamma$ and $\Zc\Gamma$ are finitely generated, and $\Out(\Inn\Gamma)$ is Minkowski by hypothesis. By \cref{out ses} and \cref{Minkowski ses}, we can conclude that $\Out(\Gamma)$ is Minkowski.
	
	The group $Z^1(\Inn\Gamma,\Zc\Gamma)$ is Minkowski by \cref{2-step Minkowski 1-cocycles} and $\Aut(\Inn\Gamma)$ is Minkowski by hypothesis. Therefore $\Aut(\Gamma)$ is Minkowski by \cref{aut ses} and \cref{Minkowski ses}.
\end{proof}

\begin{rem}
	Note that if $\Gamma$ is a centreless finitely generated group, then $\Inn\Gamma\cong \Gamma$. Therefore $\Gamma$ is 2-step Minkowski if and only if $\Out(\Gamma)$ and $\Aut(\Gamma)$ are Minkowski.
\end{rem}

\begin{prop}\label{2-step Minkowski lattice}
	Let $\Gamma$ be lattice of a connected Lie group, then $\Gamma$ has the 2-step Minkowski property.
\end{prop}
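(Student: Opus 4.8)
The plan is to verify the four conditions of Definition \ref{2-step Minkowski} for a lattice $\Gamma$ of a connected Lie group $G$: namely that $\Zc\Gamma$ and $\Zc\Inn\Gamma$ are finitely generated, and that $\Aut(\Inn\Gamma)$ and $\Out(\Inn\Gamma)$ are Minkowski. The last two statements are exactly Proposition \ref{autoutinn minkowski}, which we may quote, so the real content here is finite generation of the two centers. Since $\Inn\Gamma\cong\Gamma/\Zc\Gamma$, once we know $\Zc\Gamma$ is finitely generated we get that $\Inn\Gamma$ is a finitely generated group, and then $\Zc\Inn\Gamma$ is a subgroup of a finitely generated group; but that alone does not give finite generation of $\Zc\Inn\Gamma$, so we will need a structural argument.

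First I would reduce to the polycyclic-by-finite case. A lattice $\Gamma$ in a connected Lie group $G$ is finitely presented, and by standard structure theory (using that $G$ has finitely many components and the solvable radical, and passing to $\Gamma\cap G^\circ$ of finite index) one gets a normal subgroup of finite index that is a lattice in a connected solvable Lie group, hence polycyclic; alternatively one invokes that a lattice in a connected Lie group is either virtually polycyclic or contains a free group / has a semisimple part, and in all cases the center is handled by the same mechanism. The cleanest route: write $G=R\cdot S$ (radical times semisimple Levi factor) up to finite cover; the projection of $\Zc\Gamma$ into the semisimple part lands in a discrete normal subgroup of a semisimple group with no compact factors (after discarding those), hence is finite modulo center, while the radical contributes a polycyclic piece. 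In any event, $\Zc\Gamma$ is a finitely generated abelian group: it is a finitely generated abelian group because it is the center of a group that is (virtually) polycyclic-by-(something with trivial center contribution), and subgroups of polycyclic groups are polycyclic, so finitely generated. I would make this precise by citing the fact that $\Zc\Gamma$ for $\Gamma$ a lattice in a connected Lie group is finitely generated — this is essentially the statement that such $\Gamma$ has finitely generated center, which follows from $\Gamma$ being of type $F_\infty$ and, more elementarily, from the Mostow/Auslander structure of lattices (the center of $\Gamma$ maps to the center of $G$ with finitely generated kernel coming from a cocompact lattice situation).

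For $\Zc\Inn\Gamma=\Zc(\Gamma/\Zc\Gamma)$: here I would argue that $\Gamma/\Zc\Gamma$ is again (virtually) a lattice-like group, or more directly that the preimage in $\Gamma$ of $\Zc(\Gamma/\Zc\Gamma)$ is a nilpotent-of-class-$\le 2$ normal subgroup $N$ with $[N,N]\subseteq\Zc\Gamma$ finitely generated and $N/\Zc\Gamma\cong\Zc\Inn\Gamma$; since $\Gamma$ is finitely generated and (virtually) polycyclic, every subgroup — in particular $N$ and hence $N/\Zc\Gamma$ — is finitely generated. So the key leverage is: lattices in connected Lie groups are finitely generated and, after passing to a finite-index subgroup, their relevant subquotients inherit polycyclic-type finiteness, OR one simply uses that all subgroups of $\Gamma$ are finitely generated when $\Gamma$ is polycyclic-by-finite, and reduces the general Lie case to this via the radical/Levi decomposition plus the observation that the semisimple part contributes nothing to the center. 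Finally, apply Proposition \ref{autoutinn minkowski} to conclude the Minkowski conditions, completing all four requirements.

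The main obstacle I anticipate is the finite generation of $\Zc\Gamma$ (and consequently $\Zc\Inn\Gamma$) in full generality, because a lattice in an arbitrary connected Lie group need not be polycyclic (e.g. lattices in semisimple groups). The resolution is that the center of $G$ is what constrains $\Zc\Gamma$: one shows $\Zc\Gamma$ maps with finite kernel (lying in a compact, hence trivial-on-a-torsion-free-finite-index-subgroup, piece) onto a discrete subgroup of the finitely generated abelian group $\Zc G$, and discreteness forces finite generation. Handling the bookkeeping of passing to finite-index subgroups (and using Lemma \ref{outer automorphism and finite index subgroups} to transfer Minkowski-ness back up, and the fact that a subgroup of finite index in a finitely-generated-center group again has finitely generated center) is the routine-but-necessary part of the argument.
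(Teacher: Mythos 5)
Your proposal is correct and follows essentially the same route as the paper: the paper's proof takes the short exact sequence $1\to\Gamma_A\to\Gamma\to\Gamma_{nc}\to 1$ from \cite{daura2024actions}, with $\Gamma_A$ virtually polycyclic and $\Gamma_{nc}$ a centreless lattice in a semisimple Lie group, observes that $\Zc\Gamma$ and $\Zc\Inn\Gamma$ must land inside the virtually polycyclic piece precisely because $\Gamma_{nc}$ is centreless (so they are finitely generated), and cites the same source for the Minkowski conditions on $\Aut(\Inn\Gamma)$ and $\Out(\Inn\Gamma)$ --- which is exactly your radical/Levi reduction combined with quoting \cref{autoutinn minkowski}. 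The one caveat is that your intermediate assertion that every subgroup of $\Gamma$ is finitely generated ``since $\Gamma$ is (virtually) polycyclic'' is false for general lattices, but you identify and repair this yourself via the reduction to the polycyclic part, so the final argument is sound.
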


\begin{proof}
	Recall that there is a short exact sequence $1\longrightarrow \Gamma_A\longrightarrow \Gamma\longrightarrow \Gamma_{nc}\longrightarrow 1$ where $\Gamma_A$ is virtually polycyclic and $\Gamma_{nc}$ is a centreless lattice in a semisimple Lie group (see \cite[Section 6]{daura2024actions}). In consequence, $\Zc\Gamma\trianglelefteq \Gamma_A\trianglelefteq \Gamma$. Thus, we have a short exact sequence $1\longrightarrow \Gamma_A/\Zc\Gamma\longrightarrow \Inn\Gamma\longrightarrow \Gamma_{nc}\longrightarrow 1$. Moreover $\Gamma_A/\Zc\Gamma$ is virtually polycyclic.
	
	Since $\Gamma_{nc}$ is centreless, the center $\Zc\Inn\Gamma$ is a subgroup of $\Gamma_A/\Zc\Gamma$. Since $\Gamma_A/\Zc\Gamma$ is virtually polycyclic, $\Zc\Inn\Gamma$ is finitely generated. Moreover, using again that $\Gamma_A/\Zc\Gamma$ is virtually polycyclic and $\Gamma_{nc}$ is a centreless lattice in a semisimple Lie group and that $\Gamma_A/\Zc\Gamma$ is virtually polycyclic we can conclude that $\Out(\Inn\Gamma)$ and $\Aut(\Inn\Gamma)$ are Minkowski (see \cite[Section 6]{daura2024actions} and in particular, \cite[Reamrk 6.7]{daura2024actions}).  
	
\end{proof}

We are ready to prove \cref{main theorem9 intro}.

\begin{proof}[Proof of \cref{main theorem9 intro}]
	The idea of the proof is to reduce the general case to the case where $M$ is an infranilmanifold finitely covered by a $2$-step nilmanifold. We divide the proof in $4$ parts.
	
	\textbf{Part 1. Study of the first step of the iterated actions:} Since $\D_2(M)=(f,b)$ there exist a strictly increasing sequence of prime numbers $\{p_i\}_{i\in\mathbb{N}}$ and a sequence of numbers $\{a_i\}_{i\in\mathbb{N}}$ such that $\{(\Z/p_i^{a_i})^f,(\Z/p_i)^b\}\acts M$ freely. Since $\Out(\pi_1(M))$ is Minkowski, there exists $i_0$ such that for all $i\geq i_0$ the induced group morphism $\psi_i:(\Z/p_i^{a_i})^f\longrightarrow \Out(\pi_1(M))$ is trivial. Thus, we can assume without loss of generality that all $\psi_i$ are trivial. Now, we need the following lemma:
	
	\begin{lem}\label{inner actions aspherical manifolds}\cite[Proposition 3.1.21]{lee2010seifert}
		Let $M$ be a closed aspherical manifold and let $\Zc\pi_1(M)$ be finitely generated. Assume that we have a free action of an abelian group $A$ on $M$ such that $\psi:A\longrightarrow \Out(\pi_1(M))$ is trivial. Then $\Zc\pi_1(M/A)=C_{\pi_1(M/A)}(\pi_1(M))$ and it is an extension of $\Zc\pi_1(M)$ by $A$. In particular, $\rank(\Zc\pi_1(M/A))=\rank(\Zc\pi_1(M))$.
	\end{lem}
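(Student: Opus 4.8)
The plan is to lift the $A$-action to the universal cover $\tilde M$ of $M$ and feed the result into \cref{aspherical manifolds big diagram}. Write $\tilde A$ for the group acting on $\tilde M$ that covers the $A$-action, so that there is an extension $1\to\pi_1(M)\to\tilde A\to A\to 1$ with abstract kernel $\psi$. Since the $A$-action on $M$ is free, the $\tilde A$-action on $\tilde M$ is free as well (an element of $\tilde A\setminus\pi_1(M)$ fixing a point of $\tilde M$ would descend to a nontrivial element of $A$ fixing a point of $M$), and it is properly discontinuous because $\pi_1(M)$ is and has finite index in $\tilde A$. Hence $M/A=\tilde M/\tilde A$ is again a closed aspherical manifold with $\pi_1(M/A)=\tilde A$. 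Applying \cref{aspherical manifolds big diagram} to the effective action of the finite group $A$ on $M$, and using that $\ker\psi=A$ because $\psi$ is trivial, the top row becomes
\[
1\longrightarrow \Zc\pi_1(M)\longrightarrow C\longrightarrow A\longrightarrow 1,\qquad C:=C_{\tilde A}(\pi_1(M))=C_{\pi_1(M/A)}(\pi_1(M)),
\]
which exhibits $C$ as an extension of $\Zc\pi_1(M)$ by $A$. As $A$ is finite and $\Zc\pi_1(M)$ is finitely generated, $C$ is finitely generated and $\Zc\pi_1(M)\leq C$ has finite index, so $\rank C=\rank\Zc\pi_1(M)$. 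Thus everything reduces to proving $\Zc\pi_1(M/A)=C$.

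One inclusion is immediate: $\Zc\pi_1(M/A)=\Zc\tilde A\subseteq C_{\tilde A}(\pi_1(M))=C$, since a central element of $\tilde A$ commutes in particular with $\pi_1(M)\leq\tilde A$. For the reverse inclusion I would use triviality of $\psi$ twice. First, triviality of $\psi$ forces the composite $\tilde A\to\Out\pi_1(M)$ in \cref{aspherical manifolds big diagram} to be trivial, so the conjugation action $\tilde A\to\Aut\pi_1(M)$ lands in $\Inn\pi_1(M)$; its kernel is exactly $C$, and $\pi_1(M)$ already surjects onto $\Inn\pi_1(M)$, so $\tilde A=\pi_1(M)\cdot C$. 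Second, $C$ is abelian: $C$ centralizes $\pi_1(M)$, hence $\Zc\pi_1(M)$ is central in $C$ and $[C,C]\subseteq\Zc\pi_1(M)$ because $C/\Zc\pi_1(M)\cong A$ is abelian; the commutator map then descends to a biadditive pairing $A\times A\to\Zc\pi_1(M)$, and since $A$ is finite while $\Zc\pi_1(M)\leq\pi_1(M)$ is torsion-free, this pairing vanishes, so $[C,C]=1$. (Equivalently, by \cref{finite group actions aspherical manifolds}.(1) the group $C$ is torsion-free, and a torsion-free finitely generated nilpotent group containing a finite-index free abelian subgroup is itself free abelian.) Now for $c\in C$ and any $g=xc'\in\tilde A$ with $x\in\pi_1(M)$ and $c'\in C$ we get $gcg^{-1}=x(c'cc'^{-1})x^{-1}=xcx^{-1}=c$, using that $C$ is abelian and that $c$ centralizes $x$. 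Hence $C\subseteq\Zc\tilde A$, so $\Zc\pi_1(M/A)=C=C_{\pi_1(M/A)}(\pi_1(M))$, and the rank equality follows.

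The only genuinely non-formal point is the claim that $C$ is abelian; the rest is bookkeeping with the exact sequences of \cref{aspherical manifolds big diagram}. Abelianness of $C$ is exactly where torsion-freeness of $\pi_1(M)$ and finiteness of $A$ enter in an essential way — if $C$ were non-abelian the asserted equality $\Zc\pi_1(M/A)=C$ would already fail, since then $\Zc\tilde A$ would be a proper subgroup of $C$.
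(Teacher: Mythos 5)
Your argument is correct. The paper itself gives no proof of this lemma --- it is quoted from \cite[Proposition 3.1.21]{lee2010seifert} --- and your proof is the standard one: the top row of the diagram in \cref{aspherical manifolds big diagram} identifies $C=C_{\pi_1(M/A)}(\pi_1(M))$ as an extension of $\Zc\pi_1(M)$ by $\ker\psi=A$, and the only real content is that $C$ is abelian (which you get correctly from $[C,C]\subseteq\Zc\pi_1(M)$ being torsion-free while the induced pairing on the finite group $A$ is killed by the exponent of $A$, or equivalently from torsion-freeness of $C$ via \cref{finite group actions aspherical manifolds}), after which $\tilde A=\pi_1(M)\cdot C$ forces $C=\Zc\tilde A$.
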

	
	
	Let $\tilde{G_i}=\pi_1(M/(\Z/p_i^{a_i})^f)$. By \cref{aspherical manifolds big diagram} and \cref{inner actions aspherical manifolds}, we have the following commutative diagram
	\[\begin{tikzcd}
		& 1\ar{d}{}& 1 \ar{d}{}& & \\
		1\ar{r}{}& \Zc\pi_1(M)\ar{d}{}\ar{r}{}& C_{\tilde{G}}(\pi_1(M))=\Zc\tilde{G}_i \ar{d}{}\ar{r}{}& (\Z/p_i^{a_i})^f \ar{d}{id}\ar{r}{}& 1\\
		1\ar{r}{}& \pi_1(M)\ar{d}{}\ar{r}{}& \tilde{G}_i \ar{d}{}\ar{r}{p}&(\Z/p_i^{a_i})^f \ar{r}{}& 1\\
		& \Inn\pi_1(M)\ar{d}{}\ar{r}{\cong}& \Inn \tilde{G}_i \ar{d}{}& & \\	
		& 1& 1 &  & \\
	\end{tikzcd}\]
	
	where the isomorphism of the last row is induced by the inclusion morphism $\Inn\pi_1(M)\longrightarrow\Aut(\pi_1(M))$. Since $\pi_1(M)$ is 2-step Minkowski and $\Inn\pi_1(M)\cong\Inn \tilde{G}_i$ for all $i$, the groups $\Aut(\Inn \tilde{G}_i)$ and $\Out(\Inn \tilde{G}_i)$ are Minkowski for all $i$. The key observation is that Minkowski constants of $\Aut(\Inn \tilde{G}_i)$ and $\Out(\Inn \tilde{G}_i)$ do not depend on $i$.
	
	\textbf{Part 2. Study of the second step of the iterated actions:} Note that $M/(\Z/p_i^{a_i})^f$ is a closed aspherical manifold. Therefore the second step of each iterated action induces a group morphism $\psi_i':(\Z/p_i)^b\longrightarrow\Out(\tilde{G}_i)$. We claim that $\psi_i'$ is injective for all $i$. Assume the contrary. Then, each iterated action is equivalent to the iterated action in $3$ steps $\{(\Z/p_i^{a_i})^b,\ker\psi_i',(\Z/p_i)^b/\ker\psi_i'\}\acts M$, where the first 2 steps of the iterated action are inner actions. 
	
	\begin{lem}\cite[Lemma 5.15]{daura2025IGG1}\label{inner actions simplifiable}
		Let $\{A,A'\}\acts M$ be a free iterated action of abelian groups on a closed connected aspherical manifold such that $\psi:A\longrightarrow \Out(\pi_1(M))$ and $\psi':A'\longrightarrow\Out(\pi_1(M/A))$ are trivial. Then $\{A,A'\}\acts M$ is simplifiable by an abelian group.
	\end{lem}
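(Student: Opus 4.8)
The plan is to pass to fundamental groups and reduce the whole statement to two applications of \cref{aspherical manifolds big diagram}. Write $\Gamma=\pi_1(M)$, $\tilde G=\pi_1(M/A)$ and $\hat G=\pi_1((M/A)/A')$, with common universal cover $\tilde M$. Since the orbit map of a free action of a finite group is a regular covering, we obtain a chain of normal subgroups $\Gamma\trianglelefteq\tilde G\trianglelefteq\hat G$ with $\tilde G/\Gamma\cong A$ and $\hat G/\tilde G\cong A'$, under which the composite orbit map $p\colon M\to(M/A)/A'$ is the covering associated to $\Gamma\leq\hat G$. Hence $\{A,A'\}\acts M$ is simplifiable exactly when $\Gamma\trianglelefteq\hat G$: the simplifying group is then $G:=\hat G/\Gamma$, which is finite of order $|A|\,|A'|$ and acts freely on $M$ with orbit map $p$, so that $\{A,A'\}\acts M\sim\{G\}\acts M$ via the identity maps. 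So it remains to arrange that $\Gamma$ is normal in $\hat G$ and that $G$ is abelian.

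For normality I would exploit the triviality of $\psi$ and $\psi'$. Apply \cref{aspherical manifolds big diagram} to the free (hence effective) action $A\acts M$: as $\psi$ is trivial, $\ker\psi=A$, so the top row of the diagram reads $1\to\Zc\Gamma\to C_{\tilde G}(\Gamma)\to A\to1$; since the right-hand map is the restriction of $\tilde G\to A$, this forces $\tilde G=\Gamma\cdot C_{\tilde G}(\Gamma)$. Applying the same lemma to the free action $A'\acts M/A$ (here $M/A$ is again a closed manifold and $\pi_1(M/A)=\tilde G$), the triviality of $\psi'$ gives $\hat G=\tilde G\cdot C_{\hat G}(\tilde G)$. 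Combining the two identities and using $C_{\tilde G}(\Gamma)\leq C_{\hat G}(\Gamma)$ together with $C_{\hat G}(\tilde G)\leq C_{\hat G}(\Gamma)$, every $g\in\hat G$ can be written as $g=\gamma z$ with $\gamma\in\Gamma$ and $z\in C_{\hat G}(\Gamma)$; therefore $g\Gamma g^{-1}=\gamma\Gamma\gamma^{-1}=\Gamma$, so $\Gamma\trianglelefteq\hat G$ and simplifiability is established.

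Finally, the decomposition $\hat G=\Gamma\cdot C_{\hat G}(\Gamma)$ shows that conjugation by any element of $\hat G$ restricts to an inner automorphism of $\Gamma$, i.e. the abstract kernel $\psi_G\colon G\to\Out(\Gamma)$ of $1\to\Gamma\to\hat G\to G\to1$ is trivial. Since $G$ is finite and acts effectively on the closed connected aspherical manifold $M$, \cref{finite group actions aspherical manifolds}(2) gives that $\ker\psi_G$ is abelian; as $\psi_G$ is trivial, $G=\ker\psi_G$ is abelian. No single step here is genuinely hard: the only things requiring care are the consistent identification of fundamental groups and universal covers along the tower $M\to M/A\to(M/A)/A'$, and checking that \cref{aspherical manifolds big diagram} applies at the second stage, which it does because $M/A$ is a closed manifold on which the finite group $A'$ acts freely. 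The substantive point is the two-fold use of \cref{aspherical manifolds big diagram} to force $\hat G=\Gamma\cdot C_{\hat G}(\Gamma)$.
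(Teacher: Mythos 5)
Your argument is correct. The paper does not prove this lemma itself — it is imported from \cite[Lemma 5.15]{daura2025IGG1} — so there is no in-paper proof to compare against, but your route is the natural one and all the steps check out: the triviality of $\psi$ and $\psi'$ turns the top rows of \cref{aspherical manifolds big diagram} into the decompositions $\tilde G=\Gamma\cdot C_{\tilde G}(\Gamma)$ and $\hat G=\tilde G\cdot C_{\hat G}(\tilde G)$, which combine to give $\hat G=\Gamma\cdot C_{\hat G}(\Gamma)$, hence $\Gamma\trianglelefteq\hat G$ and a free action of $G=\hat G/\Gamma$ realizing the composite covering; the same decomposition makes the abstract kernel $G\to\Out(\Gamma)$ trivial, so $G=\ker\psi_G$ is abelian by \cref{finite group actions aspherical manifolds}(2). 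The only points deserving explicit mention, which you do address, are that $M/A$ is again closed and aspherical so both lemmas apply at the second stage, and that the induced action of $\hat G/\Gamma$ on $\tilde M/\Gamma$ is free because $\hat G$ acts freely on $\tilde M$ by deck transformations.
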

		
	
	By \cref{inner actions simplifiable}, the iterated action $\{(\Z/p_i^{a_i})^b,\ker\psi_i'\}\acts M$ is simplifiable by an abelian group $A_i$ for all $i$. This implies that $\{(\Z/p_i^{a_i})^b,(\Z/p_i)^b\}\acts M$ and $\{A_i,(\Z/p_i)^b/\ker\psi_i\}\acts M$ are equivalent. Since $\D_2(M)=(f,b)$ we obtain that $\rank A_i=f$. Moreover, we have $\rank((\Z/p_i)^b/\ker\psi_i)<b$. Consequently, $\rank_{ab}(\{(\Z/p_i^{a_i})^b,(\Z/p_i)^b\}\acts M)<f+b$, which contradicts the fact that $\D_2(M)=(f,b)$. Thus, the only possibility is that $\ker\psi'_i$ is trivial for all $i$.
	
	Like in the first step of the proof, we can use \cref{aspherical manifolds big diagram} and \cref{inner actions aspherical manifolds} to obtain the commutative diagram 
	
	\[\begin{tikzcd}
		& 1\ar{d}{}& 1 \ar{d}{}& & \\
		& \Zc\tilde{G}_i\ar{d}{}\ar{r}{id}& C_{\tilde{G}_i'}(\tilde{G}_i)\ar{d}{}& & \\
		1\ar{r}{}& \tilde{G}_i\ar{d}{}\ar{r}{}& \tilde{G}_i' \ar{d}{}\ar{r}{p}&(\Z/p_i)^b \ar{r}{}\ar{d}{id}& 1\\
		1\ar{r}{}& \Inn\tilde{G}_i\ar{d}{}\ar{r}{}& G_i \ar{d}{}\ar{r}{}& (\Z/p_i)^b\ar{r}{} &1 \\	
		& 1& 1 &  & \\
	\end{tikzcd}\]
	
	where $\tilde{G}_i'$ is the fundamental group of $M/\{(\Z/p_i^{a_i})^f,(\Z/p_i)^b\} $ and $G_i\leq \Aut(\tilde{G}_i)$. The key observation in this case is the following lemma:
	
	\begin{lem}
		The group $\Aut(\tilde{G}_i)$ is Minkowski with a constant that does not depend on $i$.
	\end{lem}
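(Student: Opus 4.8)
The plan is to exploit the central extension $1\to \Zc\tilde G_i\to \tilde G_i\to \Inn\tilde G_i\to 1$ together with \cref{aut ses} and \cref{Minkowski ses}, arguing that each of the three ingredients appearing in the Malfait-Wim description of $\Aut(\tilde G_i,\Zc\tilde G_i)$ is Minkowski with a constant independent of $i$. First I would observe that $\Zc\tilde G_i$ is characteristic in $\tilde G_i$ (it is the center), so $\Aut(\tilde G_i,\Zc\tilde G_i)=\Aut(\tilde G_i)$ and \cref{aut ses} applies directly to the central extension with $K=\Zc\tilde G_i$ and $Q=\Inn\tilde G_i$. The abstract kernel $\psi$ of this central extension is trivial (since the extension is central $\psi\colon \Inn\tilde G_i\to \Out(\Zc\tilde G_i)$ is trivial), so $L_2'=\Stab_{\Aut(\Zc\tilde G_i)}c\leq \Aut(\Zc\tilde G_i)$ and $L_1'\leq \Aut(\Inn\tilde G_i)$.

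Next I would pin down the three constants. For $L_1'\leq \Aut(\Inn\tilde G_i)$: by the diagram in Part 1, $\Inn\tilde G_i\cong \Inn\pi_1(M)$ for every $i$, and $\pi_1(M)$ is 2-step Minkowski, so $\Aut(\Inn\pi_1(M))$ is Minkowski with a fixed constant; hence $L_1'$ is Minkowski with that same constant for all $i$. For $L_2'\leq \Aut(\Zc\tilde G_i)$: by \cref{inner actions aspherical manifolds} applied to the inner action of $(\Z/p_i^{a_i})^f$ on $M$, the rank of $\Zc\tilde G_i$ equals $\rank \Zc\pi_1(M)=:k$, which is independent of $i$; since $\Zc\tilde G_i$ is a finitely generated abelian group of rank $k$ (and, being the center of the fundamental group of a closed aspherical manifold finitely covered by a nilmanifold, torsion-free, so $\Zc\tilde G_i\cong\Z^k$), $\Aut(\Zc\tilde G_i)\cong\Gl(k,\Z)$, which is Minkowski by Minkowski's theorem with a constant depending only on $k$. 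Finally, the fiber term in the second short exact sequence of \cref{aut ses} is $Z^1_\psi(\Inn\tilde G_i,\Zc\tilde G_i)=Z^1(\Inn\tilde G_i,\Z^k)$ (the action is trivial since the extension is central), a subgroup of $\Hom(\Inn\tilde G_i,\Z^k)\oplus(\text{bounded part})$; more to the point $Z^1(\Inn\tilde G_i,\Zc\tilde G_i)$ is a finitely generated abelian group because $\Inn\tilde G_i\cong\Inn\pi_1(M)$ is finitely generated and $\Zc\tilde G_i\cong\Z^k$, so it is Minkowski, and since its isomorphism type is determined by that of $\Inn\pi_1(M)$ and $k$ alone, the constant does not depend on $i$ (cf. \cref{2-step Minkowski 1-cocycles}).

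Assembling: by \cref{Minkowski ses} applied to $1\to Z^1_\psi(\Inn\tilde G_i,\Zc\tilde G_i)\to \mathcal K_i'\to L_2'\to 1$, the group $\mathcal K_i'$ is Minkowski with a constant bounded in terms of the (uniform) constants for $Z^1$ and $L_2'$; then \cref{Minkowski ses} applied to $1\to \mathcal K_i'\to \Aut(\tilde G_i)\to L_1'\to 1$ shows $\Aut(\tilde G_i)$ is Minkowski with a constant bounded in terms of the constants for $\mathcal K_i'$ and $L_1'$ — all of which are independent of $i$. This gives a uniform Minkowski constant for $\Aut(\tilde G_i)$, as claimed.

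The main obstacle I anticipate is bookkeeping the \emph{uniformity} of the constants rather than their mere existence: \cref{Minkowski ses} as stated only asserts that $H$ is Minkowski when $K$ and $Q$ are, so I would need to check (or add a remark) that the resulting constant for $H$ can be taken to depend only on the constants for $K$ and $Q$ — which is clear from the standard proof (a finite subgroup of $H$ maps to a finite subgroup of $Q$ with kernel a finite subgroup of $K$, so $|G|\le C_K C_Q$), but should be made explicit. A secondary point to verify cleanly is that $\Zc\tilde G_i$ is torsion-free (equivalently $\cong\Z^k$): this follows because $\tilde G_i=\pi_1(M/(\Z/p_i^{a_i})^f)$ is the fundamental group of a closed aspherical manifold, hence torsion-free, and its center is then a finitely generated torsion-free abelian group by \cref{inner actions aspherical manifolds}.
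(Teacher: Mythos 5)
Your proof is correct and follows essentially the same route as the paper: the central extension $1\to\Zc\tilde G_i\to\tilde G_i\to\Inn\tilde G_i\to 1$ with $\Zc\tilde G_i$ characteristic, \cref{aut ses} to split $\Aut(\tilde G_i)$ into $Z^1(\Inn\tilde G_i,\Zc\tilde G_i)$, a subgroup of $\Aut(\Zc\tilde G_i)$ and a subgroup of $\Aut(\Inn\tilde G_i)$, and \cref{Minkowski ses} plus the identifications $\Inn\tilde G_i\cong\Inn\pi_1(M)$ and $\Zc\tilde G_i\cong\Zc\pi_1(M)\cong\Z^k$ to get a uniform constant. Your extra remarks on the uniformity of the constant in \cref{Minkowski ses} and on the torsion-freeness of $\Zc\tilde G_i$ are accurate refinements of what the paper leaves implicit.
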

	
	\begin{proof}
		Consider the short exact sequence $\begin{tikzcd} 1\ar{r}{}& \Zc\tilde{G}_i\ar{r}{}& \tilde{G}_i\ar{r}{}& \Inn\tilde{G}_i\ar{r}{}& 1 \end{tikzcd} $. 
		Since $\Zc\tilde{G}_i$ is a characteristic subgroup there exist short exact sequences
		\[\begin{tikzcd} 1\ar{r}{}& K_i\ar{r}{}& \Aut(\tilde{G}_i)\ar{r}{}& L_i\ar{r}{}& 1 \end{tikzcd} \]
		and 
		\[\begin{tikzcd} 1\ar{r}{}& Z^1(\Inn\tilde{G}_i,\Zc\tilde{G}_i)\ar{r}{}& K_i\ar{r}{}& L'_i\ar{r}{}& 1 \end{tikzcd} \]
		such that $L_i\leq \Aut(\Inn\tilde{G}_i)$ and $L_i'\leq \Aut(\Zc\tilde{G}_i)$. Since $\Inn\tilde{G}_i\cong \Inn\pi_1(M)$ and $\Zc\tilde{G}_i\cong \Zc\pi_1(M)$ we obtain that $\Aut(\tilde{G}_i)$ is Minkowski with a constant not depending on $i$.
	\end{proof}
	
	\textbf{Part 3. $M$ is an infra-nilmanifold:} Consider the abstract kernel of the extension \[\begin{tikzcd} 1\ar{r}{}& \Inn\tilde{G}_i\ar{r}{}& G_i\ar{r}{}& (\Z/p_i)^b\ar{r}{}& 1 \end{tikzcd}\] which we denote by $\phi_i:(\Z/p_i)^b\longrightarrow\Out(\Inn\tilde{G}_i )$. Since $\Out(\tilde{G}_i)$ is Minkowski with a constant not depending on $i$, $\phi_i$ is trivial for $i$ large enough. Thus, we will assume that $\phi_i$ is trivial. We obtain the following diagram where the first row is a central extension and the columns are inclusions.
	
	\[\begin{tikzcd} 
		1\ar{r}{}& \Zc\Inn\pi_1(M)\ar{r}{}\ar{d}{}& C_{G_i}(\Inn\pi_1(M))\ar{r}{}\ar{d}{}& (\Z/p_i)^b\ar{r}{}\ar{d}{id}& 1\\
		1\ar{r}{}& \Inn\pi_1(M)\ar{r}{}& G_i\ar{r}{}& (\Z/p_i)^b\ar{r}{}& 1
	\end{tikzcd}\]
	
	Note that we cannot assume that $C_{G_i}(\Inn\pi_1(M))$ or $\Zc\Inn\pi_1(M)$ are torsion-free. Nevertheless, the group $\Zc\Inn\pi_1(M)$ is finitely generated. Consequently, $\Zc\Inn\pi_1(M)\cong\Z^r\oplus T$, where $T$ is the torsion subgroup of $\Zc\Inn\pi_1(M)$. Moreover,  $C_{G_i}(\Inn\pi_1(M))\leq G_i$, which is Minkowski with a constant which does not depend on $i$. Therefore, for $i$ large enough we can assume that the order of torsion elements of $G_i$ is smaller than $p_i$. In this setting we can use the following lemma:
	
	\begin{lem} 
		Let \[\begin{tikzcd} 1\ar{r}{}& \Z^r\oplus T\ar{r}{}& G\ar{r}{}& (\Z/p)^b\ar{r}{}& 1 \end{tikzcd}\] be a central group extension such that the order of the torsion elements of $G$ is smaller than the prime $p$. Then $r\geq b$.
	\end{lem}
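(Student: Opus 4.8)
The statement to prove is: given a central extension $1\to\Z^r\oplus T\to G\to(\Z/p)^b\to 1$ in which the torsion elements of $G$ have order $<p$, we have $r\ge b$. The strategy is to quotient out the torsion in a controlled way and reduce to a pure lattice statement. First I would show that $T$ is in fact the full torsion subgroup of $G$: any torsion element $g\in G$ projects to an element of $(\Z/p)^b$ of order dividing $p$, and by hypothesis the order of $g$ is $<p$, so $g$ must project to the identity, hence $g\in\Z^r\oplus T$, and its torsion part lies in $T$. Thus $T$ is a finite characteristic (indeed central) subgroup of $G$, and $\bar G=G/T$ fits into a central extension $1\to\Z^r\to\bar G\to(\Z/p)^b\to 1$ with $\bar G$ torsion-free (any torsion element of $\bar G$ would lift to a torsion element of $G$ not in $T$, contradiction — here one uses that $T$ is exactly the torsion of $G$).

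So it suffices to treat the case $T=0$: a central extension $1\to\Z^r\to\bar G\to(\Z/p)^b\to 1$ with $\bar G$ torsion-free forces $r\ge b$. Here I would pass to the $\Q$-picture, or argue directly with the extension class. Concretely, $\bar G$ is a finitely generated nilpotent (even $2$-step) group with $\Zc\bar G\supseteq\Z^r$; consider the $p$-torsion. Pick generators $x_1,\dots,x_b\in\bar G$ whose images generate $(\Z/p)^b$. Then $x_j^p\in\Z^r$, and the commutators $[x_i,x_j]\in\Z^r$ as well. The classifying data of the extension is captured by the map $(\Z/p)^b\to \Z^r/p\Z^r$ induced by $x\mapsto x^p$ together with the alternating commutator form; torsion-freeness of $\bar G$ is equivalent to saying that no nontrivial element of $(\Z/p)^b$ lifts to a torsion element, which translates into the statement that the "$p$-th power map" $(\Z/p)^b\to(\Z/p)^r$ (reducing $x^p\bmod p\Z^r$) is injective. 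Injectivity of a homomorphism $(\Z/p)^b\to(\Z/p)^r$ of $\mathbb{F}_p$-vector spaces immediately gives $b\le r$.

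**Main obstacle.** The delicate point is making precise the equivalence "$\bar G$ torsion-free $\iff$ the $p$-th power map $(\Z/p)^b\to(\Z/p)^r$ is injective," because a priori $x^p$ depends on the choice of lift $x$ (changing $x$ by $v\in\Z^r$ changes $x^p$ by $pv$ up to a commutator correction, which lies in $p\Z^r$ plus the image of the commutator form — the commutator form takes values that may not be divisible by $p$). I would handle this by working modulo the subgroup $p\Z^r$ and carefully checking that the commutator contributions vanish modulo $p$ when raising to the $p$-th power in a $2$-step nilpotent group (using the Hall–Petrescu / binomial-type formula $(xy)^p\equiv x^py^p[y,x]^{\binom p2}\pmod{\gamma_3}$, and $\binom p2\equiv 0\bmod p$ for $p$ odd — the prime $2$ needs a separate glance, but there $\binom 22=1$ and one can instead invoke that $b\le 1$ forces nothing, or argue via $4$-torsion). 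Once the $p$-th power map is well-defined on $(\Z/p)^b$ with target $(\Z/p)^r$, an element $\bar x\ne 0$ in the kernel produces a lift $x$ with $x^p\in p\Z^r$, i.e. $x^p=py$ for some $y\in\Z^r$, and then $xy^{-1}$ (noting $y$ is central) is a nontrivial torsion element of $\bar G$ of order $p$ — contradicting torsion-freeness. This closes the argument, and unwinding through the quotient by $T$ gives $r\ge b$ in the original extension. Alternatively, and perhaps more cleanly, one can run the whole argument rationally: $\bar G\otimes$ Mal'cev completion has a central $\Q^r$, the induced map to $(\Q/\Z)$-type torsion detects exactly lifts of torsion, and finitely-generated-nilpotent-torsion-free is equivalent to the relevant rational injectivity — this avoids the prime-$2$ nuisance entirely.
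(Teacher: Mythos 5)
Your proposal is correct, and its first half coincides with the paper's argument: identify $T$ as the full torsion subgroup, pass to $G/T$, and check that $G/T$ is torsion-free using the hypothesis that torsion elements of $G$ have order $<p$. Where you diverge is in the final step, which the paper leaves as a one-line assertion and which you fill in with a Hall--Petrescu analysis of the $p$-th power map. That works, but it is heavier than necessary and it is also where your write-up has its only soft spot (the parenthetical treatment of $p=2$ is left vague). Both issues disappear with the following observation: since the extension $1\to\Z^r\to G/T\to(\Z/p)^b\to 1$ is central, the commutator induces a pairing $(\Z/p)^b\times(\Z/p)^b\to\Z^r$, and for any lifts $x,y$ one has $[x,y]^p=[x^p,y]=1$ because $x^p\in\Z^r$ is central; as $\Z^r$ is torsion-free this forces $[x,y]=1$, so $G/T$ is abelian. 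A finitely generated torsion-free abelian group containing $\Z^r$ with finite quotient is itself free of rank $r$, so $(\Z/p)^b$ is a quotient of $\Z^r$ and hence, being an $\mathbb{F}_p$-vector space of dimension $b$ generated by $r$ elements, satisfies $b\le r$. This replaces the well-definedness discussion, the binomial identity, and the prime-$2$ caveat entirely; with that substitution (or with your Hall--Petrescu route carried out carefully at $p=2$), your proof is complete.
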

	
	\begin{proof}
		We have a central short exact sequence \[\begin{tikzcd} 1\ar{r}{}& \Z^r\ar{r}{}& G/T\ar{r}{}& (\Z/p)^b\ar{r}{}& 1 \end{tikzcd}\] 
		Note that $G/T$ is torsion-free. If not, there would exist an element $g\in G$ such that $g^p\in T$ and therefore its order $o(g)\geq p$, contradicting the fact that the order of the torsion elements of $G$ is smaller than the prime $p$. Since the extension is central and $G/T$ is torsion-free this implies that $r\geq b$.
	\end{proof}
	
	In consequence $\Z^b\leq \Inn\pi_1(M)$ and we have the following commutative diagram where the rows are central extensions and the columns are inclusions:
	
	\[\begin{tikzcd} 
		1\ar{r}{}& \Z^f\ar{r}{}\ar{d}{id}& \Gamma\ar{r}{}\ar{d}{}& \Z^b\ar{r}{}\ar{d}{}& 1\\
		1\ar{r}{}& \Z^f\ar{r}{}& \pi_1(M)\ar{r}{}& \Inn\pi_1(M)\ar{r}{}& 1
	\end{tikzcd}\]
	
	The group $\Gamma$ is a finitely generated torsion-free 2-step nilpotent group and hence it is a lattice in a $2$-step nilpotent Lie group $N$. The nilmanifold $N/\Gamma$ is a the total space of a principal $T^f$-fibration over $T^b$.
	
	Let $\tilde{M}$ denote the universal covering of $M$. We claim that the covering $\tilde{M}/\Gamma\longrightarrow M$ is a finite covering. Indeed, since $\tilde{M}$ is contractible, we have $H^*(\tilde{M}/\Gamma)\cong H^*(\Gamma,\Z)=H^*(N/\Gamma,\Z)$. Therefore, $H^n(\tilde{M}/\Gamma)\neq 0$, which implies that $\tilde{M}/\Gamma$ is a closed connected manifold. The map $\tilde{M}/\Gamma\longrightarrow M$ is a covering between closed manifolds and hence a finite covering.
	
	We reach the conclusion that $[\pi_1(M):\Gamma]< \infty$. Since $\pi_1(M)$ is torsion-free, $\pi_1(M)$ is an almost-Bieberbach group (see \cref{AC-groups charactherization}). Since the Borel conjecture holds for almost-Bieberbach groups (see \cite{bartels2012borel}), we obtain that $M$ is an infra-nilmanifold.
	
	\begin{rem}
		The central short exact sequence \[\begin{tikzcd} 
			1\ar{r}{}& \Z^f\ar{r}{}& \Gamma\ar{r}{}& \Z^b\ar{r}{}& 1
		\end{tikzcd}\]
		can be trivial and therefore $\Gamma\cong \Z^{f+b}=\Z^n$. This implies that $M$ is a flat manifold and $\D_2(M)=(\rank\pi_1(M),0)$ by \cref{iterated discsym flat manifolds}. Thus, $b=0$ and $\rank\Zc\pi_1(M)=f=n$. By \cref{discsym rigidity}.2, we can conclude that $M\cong T^n$, which completes the proof of \cref{main theorem9 intro} in this particular case. Thus, from now on we will assume that the above central exact sequence is not trivial and hence $\Gamma$ will be a 2-step nilpotent torsion-free group. 
	\end{rem}
	
	\textbf{Part 4. If $M$ is an infra-nilmanifold with $\D_2(M)=(f,b)$ and $f+b=n$ then $M$ is a nilmanifold:} To prove the bolded claim and finish the proof of \cref{main theorem9 intro} we need some preliminary results of free group actions on 2-step nilmanifolds.
	
	Let $N/\Gamma$ be a 2-step nilmanifold of dimension $n$. Assume that $\rank\Zc\Gamma=f$ and let $b=n-f$. Thus, $N/\Gamma$ is the total space of principal $T^f$-bundle over $T^b$ and $\Gamma$ is a finitely generated torsion-free 2-step nilpotent group fitting in the short exact sequence $1\longrightarrow \Zc\Gamma\cong \Z^f\longrightarrow \Gamma\longrightarrow \Z^b\longrightarrow 1$. Then we have short exact sequences 
	$$1\longrightarrow K\longrightarrow \Out(\Gamma)\longrightarrow \Out(\Z^b)=\Gl(\Z,b)\longrightarrow 1$$
	and 
	$$1\longrightarrow \overline{H}^{1}(\Z^b,\Z^f)\longrightarrow K\longrightarrow K'\longrightarrow 1,$$
	where $K'\leq \Out(\Z^b)=\Gl(b,\Z)$ and $\overline{H}^1(\Z^b,\Z^f)=\{[g]\in\Out(\Gamma):g_{|\Z^f}=id_{\Z^f},\overline{g}:\Z^b\longrightarrow \Z^b=id_{\Z^b}\}$.
	
	Let $G$ be a finite group acting freely on $N/\Gamma$ and let $\tilde{G}$ be the fundamental group of the manifold $(N/\Gamma)/G$. There exists a group morphism $\phi:G\longrightarrow \Out(\Gamma)$ and therefore we can consider the morphism $\phi_\Q:G\longrightarrow \Out(\Gamma_\Q)$. Our next goal is to study $\ker(\phi_\Q:G\longrightarrow \Out(\Gamma_\Q))$. The first result we need is proved in \cite{daura2024actions}.  
	
	\begin{prop}\label{discsym coverings}\cite[Proposition 3.14]{daura2024actions}
		Let $M$ be a closed connected aspherical manifold such that $\Zc\pi_1(M)$ is finitely generated. Assume that $G$ is a finite group acting freely on $M$. Then $\rank(\Zc\pi_1(M))$ and $\rank(\Zc\pi_1(M/G))$ are equal if and only if the morphism $\psi':G\longrightarrow \Aut(\Zc\pi_1(M))$, obtained by restricting automorphisms of $\pi_1(M)$ on $\Zc\pi_1(M)$, is trivial.
	\end{prop}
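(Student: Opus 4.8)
The plan is to describe $\Zc\pi_1(M/G)$ explicitly as the fixed subgroup of a finite-group action on the centralizer $C_{\tilde G}(\pi_1(M))$ appearing in \cref{aspherical manifolds big diagram}, and then to reduce the claimed equivalence to an elementary fact about a rational representation of $G$. Throughout write $\Gamma=\pi_1(M)$, $\tilde G=\pi_1(M/G)$ and $k=\rank\Zc\Gamma$; since $M$ is closed aspherical, $\Gamma$ is torsion-free, so $\Zc\Gamma\cong\Z^k$. As $G$ acts freely, $M/G$ is again a closed connected aspherical manifold, so \cref{aspherical manifolds big diagram} and \cref{finite group actions aspherical manifolds} apply to the extension $1\to\Gamma\to\tilde G\to G\to1$: in particular $C_{\tilde G}(\Gamma)$ is torsion-free, contains $\Zc\Gamma$, and $C_{\tilde G}(\Gamma)/\Zc\Gamma\cong\ker\psi$ is finite, so that $\Zc\Gamma$ is a finite-index subgroup of $C_{\tilde G}(\Gamma)$ and $C_{\tilde G}(\Gamma)$ is finitely generated.

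The key step is the identification $\Zc\tilde G=\big(C_{\tilde G}(\Gamma)\big)^{G}$. Since $C_{\tilde G}(\Gamma)$ is normal in $\tilde G$ and $\Gamma$ centralizes it, the conjugation action of $\tilde G$ on $C_{\tilde G}(\Gamma)$ descends to an action of $G=\tilde G/\Gamma$; an element of $C_{\tilde G}(\Gamma)$ is $G$-fixed exactly when it commutes with a lift of each element of $G$, hence — as it already commutes with $\Gamma$ — exactly when it is central in $\tilde G$, and conversely every central element of $\tilde G$ lies in $C_{\tilde G}(\Gamma)$ and is $G$-fixed. Moreover $\Zc\Gamma$, being characteristic in the normal subgroup $\Gamma$, is $G$-invariant, and the induced $G$-action on $\Zc\Gamma$ is by definition the morphism $\psi'$. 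Thus $\Zc\Gamma\hookrightarrow C_{\tilde G}(\Gamma)$ is a $G$-equivariant, finite-index inclusion of finitely generated abelian groups, which becomes an isomorphism of $\Q[G]$-modules after $-\otimes\Q$, with $G$ acting through $\psi'\otimes\Q$.

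It then remains to combine these with the standard fact that $\rank(A^{G})=\dim_{\Q}(A\otimes\Q)^{G}$ for a finite group $G$ and a finitely generated abelian $G$-module $A$ (proved by averaging with $\tfrac1{|G|}\sum_{g}g$ on $A\otimes\Q$). Applying this to $A=C_{\tilde G}(\Gamma)$ gives
\[
\rank\Zc\pi_1(M/G)=\dim_{\Q}\big(C_{\tilde G}(\Gamma)\otimes\Q\big)^{G}=\dim_{\Q}\big(\Zc\Gamma\otimes\Q\big)^{\psi'(G)}.
\]
Since a finite group that fixes a subspace of full dimension must act trivially on it, this equals $k=\rank\Zc\pi_1(M)$ if and only if $\psi'\otimes\Q$ is trivial; and $\psi'\otimes\Q$ is trivial if and only if $\psi'$ itself is trivial, because $\Gl(k,\Z)\hookrightarrow\Gl(k,\Q)$ is injective. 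This settles both implications simultaneously (and yields $\rank\Zc\pi_1(M/G)\le\rank\Zc\pi_1(M)$ as a byproduct). I do not anticipate a genuine obstacle: the only point requiring a moment's care is the identification $\Zc\tilde G=(C_{\tilde G}(\Gamma))^{G}$ — in particular verifying that $\Gamma$ centralizes $C_{\tilde G}(\Gamma)$ so that the $G$-action is well defined — after which the statement collapses to the one-line invariant-theory computation displayed above.
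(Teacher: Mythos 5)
Your argument is correct, and since the paper only cites this statement from \cite{daura2024actions} rather than reproving it, there is nothing in the present text to diverge from; your route (identifying $\Zc\tilde G$ with the $G$-invariants of $C_{\tilde G}(\Gamma)$ and then averaging over $\Q$) is the natural one and matches the strategy of the cited proof. The only point worth making explicit is that $C_{\tilde G}(\Gamma)$ is abelian before you tensor it with $\Q$: this follows because it is torsion-free with $\Zc\Gamma$ central of finite index in it (so its commutator subgroup is finite by Schur's theorem, hence trivial), exactly the facts you already invoke from \cref{finite group actions aspherical manifolds}.
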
	
	
	In the setting of \cref{main theorem9 intro}, we can rewrite \cref{discsym coverings} as follows:
	
	\begin{cor}\label{infranilmanifold rank center}
	We have $\rank\Zc\tilde{G}=\rank \Zc\Gamma$ if and only if $\phi':G\longrightarrow \Aut(\Zc\Gamma)$ is trivial. In this situation, $\Zc\tilde{G}=C_{\tilde{G}}(\Gamma)$.
	\end{cor}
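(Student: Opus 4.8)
The plan is to read the equivalence straight off \cref{discsym coverings} and then upgrade the conclusion using the structure of $C_{\tilde G}(\Gamma)$ supplied by \cref{aspherical manifolds big diagram} and \cref{finite group actions aspherical manifolds}.

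First I would apply \cref{discsym coverings} with $M=N/\Gamma$: this is a closed connected aspherical manifold with $\pi_1(M)=\Gamma$ and $\Zc\pi_1(M)=\Zc\Gamma\cong\Z^f$ finitely generated, and $\pi_1\bigl((N/\Gamma)/G\bigr)=\tilde G$. Here $\phi'$ is, by definition, the $G$-action on $\Zc\Gamma$ obtained by restricting the outer action $\phi\colon G\to\Out(\Gamma)$; since inner automorphisms of $\Gamma$ act trivially on $\Zc\Gamma$ and $\Zc\Gamma$ is characteristic in $\Gamma$, this is the same as the conjugation action of $\tilde G$ on $\Zc\Gamma\trianglelefteq\tilde G$, which factors through $G=\tilde G/\Gamma$. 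So $\phi'$ is exactly the morphism $\psi'$ of \cref{discsym coverings}, and that proposition gives the asserted equivalence $\rank\Zc\tilde G=\rank\Zc\Gamma\iff\phi'$ trivial.

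For the last sentence, assume $\phi'$ is trivial. I would first show $C_{\tilde G}(\Gamma)\cong\Z^f$. By \cref{finite group actions aspherical manifolds} it is torsion-free, and by the top row of \cref{aspherical manifolds big diagram} it is a central extension of the finite group $\ker\phi$ by $\Zc\Gamma\cong\Z^f$ (central because any element centralizing $\Gamma$ centralizes $\Zc\Gamma$), hence center-by-finite; Schur's theorem then forces its commutator subgroup to be finite, thus trivial, so $C_{\tilde G}(\Gamma)$ is finitely generated torsion-free abelian of rank $f$ containing $\Zc\Gamma$ as a finite-index sublattice. Now $C_{\tilde G}(\Gamma)$ is normal in $\tilde G$ (centralizer of a normal subgroup) and $\Gamma$ acts trivially on it by conjugation, so there is an induced action $\rho\colon G\to\Aut\bigl(C_{\tilde G}(\Gamma)\bigr)\cong\Gl(f,\Z)$ whose restriction to the sublattice $\Zc\Gamma$ is $\phi'$. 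Since $\phi'$ is trivial, each $\rho(g)$ fixes a finite-index subgroup of $\Z^f$ pointwise, hence is the identity; therefore $\tilde G$ centralizes $C_{\tilde G}(\Gamma)$, i.e.\ $C_{\tilde G}(\Gamma)\subseteq\Zc\tilde G$. The reverse inclusion is automatic, so $\Zc\tilde G=C_{\tilde G}(\Gamma)$.

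The only mildly delicate point is the passage from ``$\phi'$ trivial'' to ``$\rho$ trivial'': the hypothesis controls the $G$-action only on the sublattice $\Zc\Gamma$, and one must use that an element of $\Gl(f,\Z)$ fixing a finite-index subgroup pointwise is the identity (tensor with $\Q$). Everything else is bookkeeping with the exact sequences already recalled; note in particular that this same argument also shows $C_{\tilde G}(\Gamma)\cong\Z^f$ unconditionally, which will be convenient later.
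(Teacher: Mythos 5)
Your proposal is correct. The first sentence of the corollary is obtained exactly as you do it: the paper simply invokes \cref{discsym coverings} with $M=N/\Gamma$ and offers no further argument, and your identification of $\phi'$ with the $\psi'$ of that proposition (via the conjugation action of $\tilde G/\Gamma$ on the characteristic subgroup $\Zc\Gamma$, well defined because inner automorphisms act trivially on the centre) is the intended reading. The second sentence is where your write-up adds genuine content: the paper states $\Zc\tilde G=C_{\tilde G}(\Gamma)$ without proof, and the only nearby result, \cref{inner actions aspherical manifolds}, covers only \emph{inner} actions of \emph{abelian} groups, so it does not apply verbatim here. Your argument — $C_{\tilde G}(\Gamma)$ is torsion-free by \cref{finite group actions aspherical manifolds}, centre-by-finite via the top row of \cref{aspherical manifolds big diagram}, hence free abelian of rank $f$ by Schur's theorem, after which triviality of $\phi'$ on the finite-index sublattice $\Zc\Gamma$ forces the conjugation action of $\tilde G$ on all of $C_{\tilde G}(\Gamma)\cong\Z^f$ to be trivial — is valid, and you correctly flag the one delicate step (an element of $\Gl(f,\Z)$ fixing a finite-index subgroup pointwise is the identity). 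Your parenthetical observation that $C_{\tilde G}(\Gamma)\cong\Z^f$ holds unconditionally is also accurate and consistent with how the corollary is later used in Part~4 of the proof of \cref{main theorem9 intro}.
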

	
	Recall that we can define the isolator of the commutator $[\Gamma,\Gamma]$ as $\sqrt{[\Gamma,\Gamma]}=\{\gamma\in\Gamma:\gamma^r\in[\Gamma,\Gamma]\text{ for some $r$}\}$. It is a characteristic subgroup of $\Gamma$ and $\Gamma/\sqrt{[\Gamma,\Gamma]}$ is torsion-free (see \cite[Lemma 1.1.2]{dekimpe2006almost}). 
	
	\begin{lem}\cite[Proposition 2.4.1]{dekimpe2006almost}\label{automorphism queotient isolator subgroup}
		Given a positive integer $a$, consider the extension $$1\longrightarrow \Gamma\longrightarrow \tilde{\Gamma}\longrightarrow \Z/a\longrightarrow 1.$$
		The group $\tilde{\Gamma}$ is nilpotent if and only if the induced map $\overline{\phi}:\Z/a\longrightarrow \Aut(\Gamma/\sqrt{[\Gamma,\Gamma]})$ is trivial. 
	\end{lem}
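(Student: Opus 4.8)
The plan is to reduce to the torsion-free abelianisation of $\Gamma$ and treat the two implications by elementary commutator calculus. Put $A:=\Gamma/\sqrt{[\Gamma,\Gamma]}$; since $\Gamma$ is a finitely generated (torsion-free) nilpotent group, $A$ is finitely generated torsion-free abelian, so $A\cong\Z^k$. As $\sqrt{[\Gamma,\Gamma]}$ is characteristic in $\Gamma$ it is normal in $\tilde\Gamma$, and dividing the given extension by it yields $1\to A\to H\to\Z/a\to 1$ with $H=\tilde\Gamma/\sqrt{[\Gamma,\Gamma]}$, in which $\overline\phi$ is precisely the conjugation action of $H$ on its normal subgroup $A$. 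Fix an element $g\in\tilde\Gamma$ (and its image in $H$) lifting a generator of $\Z/a$, and let $\alpha$ denote conjugation by $g$, viewed in $\Aut(\Gamma)$ or in $\Aut(A)$; then $\overline\phi$ is trivial if and only if $\alpha$ acts as the identity on $A$.

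For ``$\tilde\Gamma$ nilpotent $\Rightarrow\overline\phi$ trivial'': if $\tilde\Gamma$ is nilpotent then so is $H$, say of class $c$. For $x\in A$ one has $[g,x]=\alpha(x)x^{-1}\in A$, and iterating, the $j$-fold commutator $[g,[g,\dots,[g,x]]]$ equals $(\alpha-\operatorname{id})^{j}(x)$ (writing $A$ additively) and lies in $\gamma_{j+1}(H)$; taking $j=c$ gives $(\alpha-\operatorname{id})^{c}=0$, so $\alpha$ is unipotent. Since also $g^{a}\in A$, we get $\alpha^{a}=\operatorname{id}$. A unipotent automorphism of $\Z^{k}$ of finite order dividing $a\geq 1$ is the identity: with $N=\alpha-\operatorname{id}$, the identity $(\operatorname{id}+N)^{a}=\operatorname{id}$ gives $N\bigl(a\operatorname{id}+\binom{a}{2}N+\cdots\bigr)=0$, and the second factor is invertible over $\Q$, so $N=0$.

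For the converse, assume $\alpha$ acts trivially on $\Gamma/\sqrt{[\Gamma,\Gamma]}$, and work with the isolator series $\Gamma^{(i)}:=\sqrt{\gamma_i(\Gamma)}$: these are characteristic in $\Gamma$ (hence normal in $\tilde\Gamma$), each $\Gamma^{(i)}/\Gamma^{(i+1)}$ is finitely generated torsion-free abelian, one has $[\Gamma^{(i)},\Gamma^{(j)}]\subseteq\Gamma^{(i+j)}$ (a standard property of isolators, see \cite[Ch.~1]{dekimpe2006almost}), and $\Gamma^{(c+1)}=1$ where $c$ is the nilpotency class of $\Gamma$ (using torsion-freeness of $\Gamma$). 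The core step is to show by induction on $i$ that $\alpha$ acts trivially on $\Gamma^{(i)}/\Gamma^{(i+1)}$: the case $i=1$ is the hypothesis; for the step, the image of $\gamma_i(\Gamma)=[\Gamma,\gamma_{i-1}(\Gamma)]$ has finite index in $\Gamma^{(i)}/\Gamma^{(i+1)}$, and expanding $\alpha([x,y])=[\alpha(x),\alpha(y)]$ for $x\in\Gamma$, $y\in\gamma_{i-1}(\Gamma)$, every factor other than $[x,y]$ lies in $\Gamma^{(i+1)}$ once the inductive hypothesis for $\Gamma^{(i-1)}/\Gamma^{(i)}$ is used; so $\alpha$ fixes a finite-index subgroup of the torsion-free group $\Gamma^{(i)}/\Gamma^{(i+1)}$, hence fixes it entirely. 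Granting the claim, $\tilde\Gamma\supseteq\Gamma=\Gamma^{(1)}\supseteq\Gamma^{(2)}\supseteq\cdots\supseteq\Gamma^{(c+1)}=1$ is a central series of $\tilde\Gamma$: indeed $[\tilde\Gamma,\tilde\Gamma]\subseteq\Gamma$ because $\tilde\Gamma/\Gamma$ is abelian, and $[\tilde\Gamma,\Gamma^{(i)}]$ is generated by $[\Gamma,\Gamma^{(i)}]\subseteq\Gamma^{(i+1)}$ and by the elements $[g,y]=\alpha(y)y^{-1}$, $y\in\Gamma^{(i)}$, which lie in $\Gamma^{(i+1)}$ by the claim. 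Hence $\tilde\Gamma$ is nilpotent of class at most $c+1$.

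I expect the inductive step in the converse --- the commutator bookkeeping that forces each correction term into the next isolator subgroup --- to be the main technical point; the remaining ingredients are routine facts about finitely generated torsion-free nilpotent groups. When $\tilde\Gamma$ is torsion-free one may instead shortcut the converse via the rational Mal'cev completion: triviality of $\overline\phi$ makes the automorphism induced by $\alpha$ on $\mathcal{L}(\Gamma)_\Q$ unipotent, hence of the form $\exp(D)$ with $D$ a nilpotent derivation, and the relation $g^{a}\in\Gamma$ forces $D$ to be inner, so $\phi_\Q$ is trivial and \cref{infranilmanifolds malcev completion trivial} applies.
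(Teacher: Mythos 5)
This lemma is quoted in the paper from \cite[Proposition 2.4.1]{dekimpe2006almost} without proof, so there is no in-paper argument to compare against; your proof is correct and is essentially the standard one (forward direction: conjugation by a lift is unipotent on the torsion-free abelianisation by nilpotency and has finite order because $g^a\in\Gamma$, hence is the identity; converse: propagate triviality down the isolator series $\sqrt{\gamma_i(\Gamma)}$ using $[\Gamma^{(i)},\Gamma^{(j)}]\subseteq\Gamma^{(i+j)}$ and the finite index of $\gamma_i(\Gamma)$ in its isolator). All the facts you invoke are standard for finitely generated torsion-free nilpotent groups, and the argument correctly avoids assuming $\tilde\Gamma$ torsion-free.
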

	
	Since $\Gamma$ is 2-step nilpotent, $\sqrt{[\Gamma,\Gamma]}\subseteq \Zc\Gamma$ and there exists a number $l$ such that $\Zc\Gamma=\sqrt{[\Gamma,\Gamma]}\oplus\Z^l$. Thus, $\Gamma/\sqrt{[\Gamma,\Gamma]}\cong \Gamma/\Zc\Gamma\oplus \Z^l$.
	
	\begin{lem}\label{infranilmanifold H1}
		We have $\phi(\ker\phi_\Q)\leq \overline{H}^1(\Z^b,\Z^f)$. Conversely, if $g\in G$ such that $\phi(g)\in \overline{H}^1(\Z^b,\Z^f)$, then $g\in \ker\phi_\Q$.
	\end{lem}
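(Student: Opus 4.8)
The plan is to compare $\Out(\Gamma)$ with $\Out(\Gamma_\Q)$ through the two short exact sequences for $\Out(\Gamma)$ recalled above and their rational analogues. Write $p\colon\Gamma\to\Gamma/\Zc\Gamma=\Z^b$. Since $\Gamma/\Zc\Gamma$ is torsion-free, $\Zc\Gamma$ is isolated in $\Gamma$, so the rational Mal'cev completion turns the central extension $1\to\Z^f\to\Gamma\to\Z^b\to1$ into a central extension $1\to\Q^f\to\Gamma_\Q\to\Q^b\to1$ with $\Q^f=(\Zc\Gamma)_\Q\subseteq\Zc\Gamma_\Q$; moreover $\Gamma_\Q$ is again $2$-step nilpotent, so $[\Gamma_\Q,\Gamma_\Q]\subseteq\Q^f$. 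The commutator of $\Gamma$ induces an alternating bilinear pairing $\omega\colon\Z^b\times\Z^b\to\Z^f$, and the pairing attached to $\Gamma_\Q$ is $\omega_\Q=\omega\otimes\Q$. Since $\Z^f$ is central (so $C_\Gamma(\Z^f)=\Gamma$) and $\Zc(\Z^b)=\Z^b$, unwinding the Malfait definition of $\overline B^1$ shows that $\overline B^1(\Z^b,\Z^f)$ is exactly the image of $\omega^\flat\colon\Z^b\to\Hom(\Z^b,\Z^f)$, $v\mapsto\omega(v,-)$, and likewise $\overline B^1(\Q^b,\Q^f)=\im\omega_\Q^\flat$; the abstract kernel $\Z^b\to\Out(\Z^f)$ being trivial, $B^1$ vanishes and $\overline H^1(\Z^b,\Z^f)=\Hom(\Z^b,\Z^f)/\im\omega^\flat$, $\overline H^1(\Q^b,\Q^f)=\Hom(\Q^b,\Q^f)/\im\omega_\Q^\flat$. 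Finally I would note that $\Out(\Gamma)\to\Out(\Gamma_\Q)$, $[f]\mapsto[f_\Q]$, carries $\overline H^1(\Z^b,\Z^f)$ into $\overline H^1(\Q^b,\Q^f)$ and, on $1$-cocycles, is just the inclusion $\Hom(\Z^b,\Z^f)\hookrightarrow\Hom(\Q^b,\Q^f)$ (the cocycle of $f_\Q$ extends that of $f$). Using $\Hom(\Q^b,\Q^f)=\Hom(\Z^b,\Z^f)\otimes\Q$ and $\im\omega_\Q^\flat=(\im\omega^\flat)\otimes\Q$, the induced map $\overline H^1(\Z^b,\Z^f)\to\overline H^1(\Q^b,\Q^f)$ is therefore the canonical map $\overline H^1(\Z^b,\Z^f)\to\overline H^1(\Z^b,\Z^f)\otimes\Q$, whose kernel is the torsion subgroup.

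For the inclusion $\phi(\ker\phi_\Q)\subseteq\overline H^1(\Z^b,\Z^f)$ I would argue directly: if $g\in\ker\phi_\Q$ and $f\in\Aut(\Gamma)$ represents $\phi(g)$, then $f_\Q=c_x$ for some $x\in\Gamma_\Q$. Since $c_x$ fixes $\Zc\Gamma_\Q\supseteq\Q^f\supseteq\Z^f$ pointwise, $f|_{\Z^f}=\mathrm{id}$; since $c_x$ descends to the identity on $\Gamma_\Q/\Q^f=\Q^b$ (commutators lie in $\Q^f$), $f$ descends to the identity on $\Gamma/\Zc\Gamma=\Z^b$. By the explicit description of $\overline H^1(\Z^b,\Z^f)$ recalled above as the set of outer classes with both of these properties, $\phi(g)=[f]\in\overline H^1(\Z^b,\Z^f)$.

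For the converse, suppose $\phi(g)\in\overline H^1(\Z^b,\Z^f)$. As $G$ is finite, $\phi(g)$ has finite order, hence is torsion in $\overline H^1(\Z^b,\Z^f)$, hence maps to $0$ under $\overline H^1(\Z^b,\Z^f)\to\overline H^1(\Q^b,\Q^f)$; since this map is compatible with $\Out(\Gamma)\to\Out(\Gamma_\Q)$ and $\overline H^1(\Q^b,\Q^f)$ sits inside $\Out(\Gamma_\Q)$, this gives $\phi_\Q(g)=1$, i.e. $g\in\ker\phi_\Q$. One can also see this concretely: pick a representative $f$ with $f|_{\Z^f}=\mathrm{id}$, $\overline f=\mathrm{id}$ and associated homomorphism $h\in\Hom(\Z^b,\Z^f)$, $h(p\gamma)=f(\gamma)\gamma^{-1}$; finite order means $f^m=c_{\gamma_0}$ for some $\gamma_0\in\Gamma$, which forces $m\,h=\omega^\flat(p\gamma_0)$, so after lifting $\tfrac1m p\gamma_0$ to $\gamma_0'\in\Gamma_\Q$ the cocycle of $f_\Q$ equals $\omega_\Q^\flat(p\gamma_0')$, the cocycle of $c_{\gamma_0'}$, whence $f_\Q=c_{\gamma_0'}$ is inner.

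I expect the main obstacle to be organisational rather than conceptual: one must set up the rational versions of the Malfait sequences carefully and check that $\overline B^1$ is the image of the commutator pairing over both $\Z$ and $\Q$, so that $\overline H^1(\Q^b,\Q^f)$ really is the rationalization $\overline H^1(\Z^b,\Z^f)\otimes\Q$ and the map induced by $\Out(\Gamma)\to\Out(\Gamma_\Q)$ really is localization. Granting that, both implications collapse to the two elementary facts that inner automorphisms act trivially on the centre and on the central quotient, and that torsion elements of an abelian group vanish after $-\otimes\Q$.
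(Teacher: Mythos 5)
Your proposal is correct. The first implication is essentially the paper's argument: if $f_\Q=c_x$ with $x\in\Gamma_\Q$, then $f$ is the identity on $\Zc\Gamma$ (which is central in $\Gamma_\Q$) and induces the identity on the abelian quotient $\Z^b$, so $[f]\in\overline H^1(\Z^b,\Z^f)$.

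For the converse, however, you take a genuinely different route. The paper forms the extension $1\to\Gamma\to\tilde\Gamma\to\langle g\rangle\to1$, shows that $\phi(g)$ acts trivially on $\Gamma/\sqrt{[\Gamma,\Gamma]}\cong\Z^b\oplus\Z^l$ (using finiteness of $\langle g\rangle$ to kill the possible unipotent part $\Z^b\to\Z^l$), invokes Dekimpe's criterion (\cref{automorphism queotient isolator subgroup}) to conclude $\tilde\Gamma$ is nilpotent, and then applies \cref{infranilmanifolds malcev completion trivial}. You instead identify $\overline H^1(\Z^b,\Z^f)$ explicitly as $\Hom(\Z^b,\Z^f)/\im\omega^\flat$ via the commutator pairing, check that $\Out(\Gamma)\to\Out(\Gamma_\Q)$ restricts on this subgroup to the localization map into the $\Q$-vector space $\Hom(\Q^b,\Q^f)/\im\omega_\Q^\flat$, and observe that the finite-order class $\phi(g)$ must die there; your closing cocycle computation ($f^m=c_{\gamma_0}$ forces $mh=\omega^\flat(p\gamma_0)$, so $f_\Q=c_{\gamma_0'}$ with $\gamma_0'$ an $m$-th root of $\gamma_0$ modulo the centre) makes this entirely self-contained. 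What your approach buys is independence from the isolator machinery and from \cref{automorphism queotient isolator subgroup} and \cref{infranilmanifolds malcev completion trivial} — in fact for this step you only need that a torsion element of an abelian group maps to zero in a $\Q$-vector space; what it costs is the bookkeeping needed to set up the rational Malfait sequence and verify compatibility of $\overline B^1$ with Mal'cev completion, all of which you do correctly. The paper's argument, by contrast, generalizes more readily beyond the $2$-step case since it never uses the explicit form of $\overline H^1$.
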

	
	\begin{proof}
		Let $g\in \ker\phi_\Q$. Then there exists $x\in\Gamma_\Q$ such that $\phi(g)(\gamma)=x\gamma x^{-1}$ for all $\gamma\in\Gamma$. Thus, $\phi(g)_{|\Z^f}=id_{\Z^f}$ since $\Z^f$ is in the center of $\Gamma_\Q$ and $\overline{\phi(g)}=id_{\Z^b}$ since $\overline{\phi(g)}$ is a conjugation on an abelian group.
		
		Conversely, given $g\in G$, we consider the extension $$1\longrightarrow \Gamma\longrightarrow \tilde{\Gamma}\longrightarrow \langle g\rangle\longrightarrow 1.$$ If $\phi(g)\in \overline{H}^1(\Z^b,\Z^f)$, then $\overline{\phi(g)}=id_{\Z^b}$ and $\phi(g)_{\Z^f}=id_{\Z^f}$. In particular, $\phi(g)$ fixes all elements in $\Z^l$. Thus, since $\langle g\rangle$ is a finite group, we have $\overline{\phi}:\langle g\rangle\longrightarrow \Aut(\Gamma/\sqrt{[\Gamma,\Gamma]})=\Aut(\Z^b\oplus\Z^l)$ is trivial. In consequence, $\tilde{\Gamma}$ is a torsion-free nilpotent group and $g\in\ker\phi_\Q$ by \cref{infranilmanifolds malcev completion trivial}.
	\end{proof}
	
	We are ready to finish the proof of \cref{main theorem9 intro}. Recall that if $\D_2(M)=(f,b)$ with $f+b=n$ then there exists a finite index subgroup $\Gamma\leq \pi_1(M)$ which is the fundamental group of a $2$-step nilmanifold $N/\Gamma$, which is the total space of a principal $T^f$-bundle over $T^b$.
	
	We consider the Fitting subgroup $\Fitt(\pi_1(M))$, which is torsion-free, nilpotent and of finite index in $\pi_1(M)$ (see \cref{AC-groups charactherization}). Since $\Gamma$ has finite index on $\pi_1(M)$, the groups $\Fitt(\pi_1(M))$ and $\Gamma$ are commensurable, and therefore we have a central short exact sequence \[\begin{tikzcd} 1\ar{r}{}& \Z^f\ar{r}{}& \Fitt\pi_1(M)\ar{r}{}& \Z^b\ar{r}{}& 1. \end{tikzcd}\]   
	
	In particular, $\D_2(N/\Fitt(\pi_1(M)))=(f,b)$ (see \cite[Theorem 1.16]{daura2025IGG1}).
	
	We also have a short exact sequence  
	\[\begin{tikzcd} 1\ar{r}{}& \Fitt(\pi_1(M))\ar{r}{}& \pi_1(M)\ar{r}{}& G\ar{r}{}& 1 \end{tikzcd}\]
	where $G$ is a finite group. If $\phi:G\longrightarrow \Out(\Fitt(\pi_1(M)))$ denotes the abstract kernel of the group extension, then the induced map $\phi_\Q:G\longrightarrow \Out(\Fitt(\pi_1(M))_\Q)$ on the rational Mal'cev completion is injective (see \cref{infranilmanifolds malcev completion injective}).
	
	Since $M$ is an infranilmanifold, we have $\D(M)=\rank\Zc\pi_1(M)$ (see \cite{daura2024actions} and \cite[Theorem 11.7.7]{lee2010seifert}). In consequence, $\rank\Zc\pi_1(M)=\rank\Zc\Fitt(\pi_1(M))=f$ and $\phi'$ is trivial by \cref{infranilmanifold rank center}. The injectivity of $\phi_\Q$ together with \cref{infranilmanifold H1} implies that $\overline{\phi}:G\longrightarrow \Out(\Fitt(\pi_1(M))/\Zc\Fitt(\pi_1(M)))\cong \Gl(b,\Z)$ is injective. We obtain the commutative diagram (see \cref{aspherical manifolds big diagram})
	
	\[\begin{tikzcd}
		& 1\ar{d}{}& 1 \ar{d}{}& & \\
		& \Zc\Fitt(\pi_1(M))=\Z^f\ar{d}{}\ar{r}{id}& \Zc\pi_1(M)\ar{d}{}& & \\
		1\ar{r}{}&\Fitt(\pi_1(M))\ar{d}{}\ar{r}{}&\pi_1(M) \ar{d}{}\ar{r}{p}&G \ar{r}{}\ar{d}{id}& 1\\
		1\ar{r}{}& \Inn\Fitt(\pi_1(M))=\Z^b\ar{d}{}\ar{r}{}& \Inn\pi_1(M) \ar{d}{}\ar{r}{}& G\ar{r}{} &1 \\	
		& 1& 1 &  & \\
	\end{tikzcd}\]
	where the identity map in the first row is induced by the inclusion morphism $\Zc\Fitt(\pi_1(M))\longrightarrow C_{\pi_1(M)}(\Fitt(\pi_1(M)))=\Zc\pi_1(M)$. The abstract kernel of the third row is $\overline{\phi}$. Since $\overline{\phi}$ is injective, $\Inn\pi_1(M)$ is a crystallographic group. Moreover, $\D_2(M)=(f,b)$ implies that $\Z^b\leq\Zc\Inn\pi_1(M)$ and hence $[\Inn\pi_1(M):\Zc\Inn\pi_1(M)]<\infty$. Therefore, $[\Inn\pi_1(M),\Inn\pi_1(M)]$ is a finite normal subgroup of a crystallographic group, which implies that $[\Inn\pi_1(M),\Inn\pi_1(M)]$ is trivial (see \cref{AC-groups charactherization}). Hence $\Inn\pi_1(M)$ is abelian and $\pi_1(M)$ is a finitely generated torsion-free 2-step nilpotent group. This implies that $M$ is a nilmanifold, as desired.
\end{proof}

The same proof can be used to obtain the following bound on the iterated discrete degree of symmetry.

\begin{cor}\label{discsym2 bound}
	Let $M$ be a closed connected aspherical manifold such that $\pi_1(M)$ is 2-step Minkowski. Then $\D_2(M)\leq (\rank\Zc\pi_1(M),\rank\Zc\Inn\pi_1(M))$ and $\rank\Zc\pi_1(M)+\rank\Zc\Inn\pi_1(M)\leq \dim(M)$.
\end{cor}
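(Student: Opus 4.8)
The plan is to adapt the four-part argument of the proof of \cref{main theorem9 intro}, stopping at the points where the equality $f+b=n$ was used, and instead extracting the inequalities that were established along the way. Suppose $\D_2(M)=(f,b)$ with $f+b>0$; we want $f\leq \rank\Zc\pi_1(M)$ and, when $f=\rank\Zc\pi_1(M)$, also $b\leq \rank\Zc\Inn\pi_1(M)$, and finally $f+b\leq\dim M$. Write $r=\rank\Zc\pi_1(M)$ and $s=\rank\Zc\Inn\pi_1(M)$ (both finite since $\pi_1(M)$ is 2-step Minkowski).

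First I would run Part 1 verbatim: by the Minkowski property of $\Out(\pi_1(M))$ we may assume the first-step maps $\psi_i\colon(\Z/p_i^{a_i})^f\to\Out(\pi_1(M))$ are trivial, and then \cref{inner actions aspherical manifolds} applied iteratively (the action of $(\Z/p_i^{a_i})^f$ factors as a tower of inner $\Z/p_i$-actions, each preserving asphericity and triviality of the outer action) gives $\Zc\tilde G_i=C_{\tilde G_i}(\pi_1(M))$ with $\rank\Zc\tilde G_i=r$ and a central extension $1\to\Zc\pi_1(M)\to\Zc\tilde G_i\to(\Z/p_i^{a_i})^f\to1$; since $\Zc\pi_1(M)\cong\Z^r\oplus(\text{torsion})$ and, for $i$ large, the torsion of $\Zc\tilde G_i$ is killed by an integer coprime to $p_i$, the quotient $\Zc\tilde G_i/T$ is torsion-free and the central extension forces $r\geq f$. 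This is exactly the lemma "$r\geq b$" of Part 3 applied one level up, and it proves the first inequality. Note also $\rank\Zc\Inn\pi_1(M)=\rank\Zc\Inn\tilde G_i$ because $\Inn\pi_1(M)\cong\Inn\tilde G_i$ (bottom row of the Part 1 diagram), so $s$ is unchanged after the first step.

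Next, assuming $f=r$, I would examine the second step. By the argument of Part 2 (using \cref{inner actions simplifiable} together with the rank-minimality built into $\mu_2(M)$) the maps $\psi_i'\colon(\Z/p_i)^b\to\Out(\tilde G_i)$ must be injective; otherwise one could simplify to reduce the abelian rank below $f+b$, contradicting $\D_2(M)=(f,b)$. Then Part 3 gives, for $i$ large, the map $\phi_i\colon(\Z/p_i)^b\to\Out(\Inn\tilde G_i)$ is trivial (Minkowski with $i$-independent constant), yielding a central extension $1\to\Zc\Inn\pi_1(M)\to C_{G_i}(\Inn\pi_1(M))\to(\Z/p_i)^b\to1$; writing $\Zc\Inn\pi_1(M)\cong\Z^s\oplus(\text{torsion})$ and killing the torsion of $C_{G_i}(\Inn\pi_1(M))$ by an integer coprime to $p_i$ (possible for $i$ large since $C_{G_i}(\Inn\pi_1(M))\leq G_i$ is Minkowski with $i$-independent constant), the same "torsion-free quotient of a central extension" lemma gives $s\geq b$. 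That is the second inequality. Finally, once $\Z^f\leq\pi_1(M)$ and $\Z^b\leq\Inn\pi_1(M)$ are in place, the short exact sequence $1\to\Zc\pi_1(M)\to\pi_1(M)\to\Inn\pi_1(M)\to1$ shows $\Z^{f+b}$ embeds (as a subgroup, though not necessarily central) in $\pi_1(M)$: lift the $\Z^b$ in $\Inn\pi_1(M)$ and note its preimage contains $\Z^f\times\Z^b$ up to finite index, so $\pi_1(M)$ contains $\Z^{f+b}$, and since $M$ is closed connected aspherical of dimension $n$ its fundamental group has cohomological dimension $n$, whence $f+b\leq n$.

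The main obstacle I expect is bookkeeping around torsion: unlike in \cref{main theorem9 intro}, here we do not know at the outset that $M$ is an infranilmanifold, so the groups $\Zc\tilde G_i$, $C_{G_i}(\Inn\pi_1(M))$ need not be torsion-free, and one must genuinely use the freedom to take $i$ (hence $p_i$) large to arrange that all relevant torsion is coprime to $p_i$ before invoking the central-extension rank lemma. A secondary subtlety is making the "simplification would lower the rank" step of Part 2 rigorous without the ambient dimension equality—but this is already handled by \cref{inner actions simplifiable} and the definition of $\mu_2$, so it should go through unchanged. Everything else is a careful re-reading of Parts 1--3 of the previous proof with the equality $f+b=n$ deleted.
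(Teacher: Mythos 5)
Your overall strategy is the one the paper intends: the paper's proof of this corollary is literally ``run Parts 1--3 of the proof of \cref{main theorem9 intro} and keep the inequalities,'' and your treatment of the first claim is correct. Part 1 gives the central extension $1\to\Zc\pi_1(M)\to\Zc\tilde{G}_i\to(\Z/p_i^{a_i})^f\to 1$ with $\Zc\tilde{G}_i$ torsion-free (here $\tilde{G}_i$ is the fundamental group of a closed aspherical manifold, so no torsion bookkeeping is actually needed at this stage), whence $f\le\rank\Zc\pi_1(M)$; and when $f=\rank\Zc\pi_1(M)$, Parts 2--3 force $\psi_i'$ to be injective and $\phi_i$ to be trivial for large $i$, and the central extension $1\to\Zc\Inn\pi_1(M)\to C_{G_i}(\Inn\pi_1(M))\to(\Z/p_i)^b\to 1$ together with the rank lemma (where the coprimality of $p_i$ to the torsion is genuinely needed, as you note) gives $b\le\rank\Zc\Inn\pi_1(M)$. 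That is exactly $\D_2(M)\le(\rank\Zc\pi_1(M),\rank\Zc\Inn\pi_1(M))$ in the lexicographic order.

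The last step has two problems. First, you prove $f+b\le\dim M$, but the corollary asserts $\rank\Zc\pi_1(M)+\rank\Zc\Inn\pi_1(M)\le\dim M$, a statement about the group itself that is strictly stronger than yours except in the extremal case $f=\rank\Zc\pi_1(M)$, $b=\rank\Zc\Inn\pi_1(M)$; it needs its own (easy) argument: the preimage $P$ of $\Zc\Inn\pi_1(M)$ in $\pi_1(M)$ is a finitely generated torsion-free nilpotent group of class at most $2$ and Hirsch length $\rank\Zc\pi_1(M)+\rank\Zc\Inn\pi_1(M)$, so $\operatorname{cd}(P)$ equals that Hirsch length and is bounded by $\operatorname{cd}(\pi_1(M))=\dim M$. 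Second, your intermediate claim that the preimage of $\Z^b$ ``contains $\Z^f\times\Z^b$ up to finite index'' is false: that preimage is a central extension of $\Z^b$ by $\Zc\pi_1(M)$, typically a nonabelian $2$-step nilpotent group, and for instance no finite-index subgroup of the integer Heisenberg group contains $\Z^3$. The conclusion $f+b\le\dim M$ survives, but only via the same Hirsch-length/cohomological-dimension argument applied to this nilpotent preimage, not via an embedded free abelian subgroup of rank $f+b$.
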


Combining \cref{2-step Minkowski lattice} and \cref{discsym2 bound}, we obtain:

\begin{cor}
	Let $G$ be a connected Lie group, $K$ a maximal subgroup of $G$ and $\Gamma$ a torsion-free cocomapct lattice of $G$. The closed aspherical locally homogeneous space $\Gamma\setminus G/K$ satisfies $\D_2(M)\leq (\rank\Zc\Gamma,\rank\Zc(\Inn\Gamma))$.
\end{cor}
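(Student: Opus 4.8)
The plan is to realize $M=\Gamma\setminus G/K$ as a closed connected aspherical manifold with fundamental group $\Gamma$ and then invoke \cref{2-step Minkowski lattice} and \cref{discsym2 bound} directly. First I would recall that, since $K$ is a maximal compact subgroup of the connected Lie group $G$, the homogeneous space $G/K$ is diffeomorphic to a Euclidean space, hence contractible. Because $\Gamma$ is a cocompact lattice, it acts on $G/K$ properly discontinuously and cocompactly, and torsion-freeness makes the action free. Therefore $M=\Gamma\setminus G/K$ is a closed connected manifold whose universal cover is $G/K$, so $M$ is aspherical and $\pi_1(M)\cong\Gamma$.

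Next I would apply \cref{2-step Minkowski lattice}: as $\Gamma$ is a lattice of a connected Lie group, it has the 2-step Minkowski property, i.e.\ $\Zc\Gamma$ and $\Zc\Inn\Gamma$ are finitely generated and $\Aut(\Inn\Gamma)$, $\Out(\Inn\Gamma)$ are Minkowski. This is exactly the hypothesis needed to feed $M$ into \cref{discsym2 bound}.

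Finally, applying \cref{discsym2 bound} to $M$ yields $\D_2(M)\leq(\rank\Zc\pi_1(M),\rank\Zc\Inn\pi_1(M))$, and substituting $\pi_1(M)\cong\Gamma$ gives $\D_2(M)\leq(\rank\Zc\Gamma,\rank\Zc\Inn\Gamma)$, as claimed (and, if desired, one also records $\rank\Zc\Gamma+\rank\Zc\Inn\Gamma\leq\dim M$ from the same corollary). There is essentially no serious obstacle here; the only point requiring a word of care is the identification of $M$ as a closed aspherical manifold with $\pi_1(M)\cong\Gamma$, which rests on the standard contractibility of $G/K$ for $K$ maximal compact. Everything else is a formal combination of results already established in the paper.
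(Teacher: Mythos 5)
Your proposal is correct and matches the paper's argument exactly: the paper derives this corollary in one line by combining \cref{2-step Minkowski lattice} with \cref{discsym2 bound}, which is precisely what you do after (correctly) noting that $G/K$ is contractible for $K$ maximal compact, so that $M$ is closed, aspherical, and has $\pi_1(M)\cong\Gamma$. No issues.
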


\Cref{discsym2 bound} can be used in the following situation:

\begin{cor}
	Let $M$ be a closed connected aspherical manifold. Suppose that the groups $\Out(\pi_1(M))$ and $\Aut(\pi_1(M))$ are Minkowski and that $\Zc\pi_1(M)$ is trivial. Let $E$ be the total space of a principal $T^f$-bundle over $M$. Then $\D_2(E)=(f,0)$. 
\end{cor}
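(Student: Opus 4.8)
The plan is to pin down $\pi_1(E)$, deduce the upper bound from \cref{discsym2 bound}, and produce the matching lower bound from the torus action.

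\textbf{Step 1 (structure of $E$).} First I would record that $E$ is a closed connected manifold and that the homotopy exact sequence of the fibration $T^f\hookrightarrow E\to M$, using that $M$ and $T^f$ are aspherical, gives $\pi_i(E)=0$ for $i\geq 2$ (so $E$ is aspherical) and a short exact sequence
\[
1\longrightarrow \Z^f\longrightarrow \pi_1(E)\longrightarrow \pi_1(M)\longrightarrow 1 .
\]
This extension is central: for a loop $\ell$ in $T^f$ based at $e$ and an arbitrary loop $\gamma$ at a basepoint $x_0\in E$, the based map $T^2\to E$, $(s,u)\mapsto \ell(s)\cdot\gamma(u)$, has coordinate circles representing the image of $[\ell]$ and $[\gamma]$, so $\Z^f=\im(\pi_1(T^f)\to\pi_1(E))$ commutes with everything. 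Since $\Zc\pi_1(M)=1$, the projection sends $\Zc\pi_1(E)$ into $\Zc\pi_1(M)=1$, so $\Zc\pi_1(E)=\Z^f$; hence $\Inn\pi_1(E)\cong\pi_1(M)$ and $\Zc\Inn\pi_1(E)=\Zc\pi_1(M)=1$.

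\textbf{Step 2 (upper bound).} Next I would check that $\pi_1(E)$ is $2$-step Minkowski in the sense of \cref{2-step Minkowski}: the groups $\Zc\pi_1(E)=\Z^f$ and $\Zc\Inn\pi_1(E)=1$ are finitely generated, and $\Out(\Inn\pi_1(E))=\Out(\pi_1(M))$ and $\Aut(\Inn\pi_1(E))=\Aut(\pi_1(M))$ are Minkowski by hypothesis. Since $E$ is closed connected aspherical, \cref{discsym2 bound} then gives
\[
\D_2(E)\leq\bigl(\rank\Zc\pi_1(E),\ \rank\Zc\Inn\pi_1(E)\bigr)=(f,0).
\]

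\textbf{Step 3 (lower bound).} For the reverse inequality I would use the free $T^f$-action on $E$. For each prime $p$ and each $a$ the finite subgroup $(\Z/p^a)^f\subseteq T^f$ acts freely on $E$, so its orbit map is a normal covering with deck group $(\Z/p^a)^f$; taking an increasing sequence of primes $\{p_i\}$ yields free iterated actions $\{(\Z/p_i^{a_i})^f\}\acts E$, equivalently $\{(\Z/p_i^{a_i})^f,(\Z/p_i)^0\}\acts E$. It remains to see that $\rank_{ab}$ of such an action is $f$. If $\{A_1',\dots,A_n'\}\acts E$ is an equivalent iterated action by finite abelian groups with orbit tower $E=X_0\to\cdots\to X_n$, then each $X_k$ is an intermediate cover of the normal covering $E\to E/(\Z/p^a)^f$, so by the Galois correspondence $X_k=E/H_k$ for a chain $1=H_0\leq H_1\leq\cdots\leq H_n=(\Z/p^a)^f$, and the normality of each step $X_{k-1}\to X_k$ forces $H_{k-1}\trianglelefteq H_k$ with $H_k/H_{k-1}\cong A_k'$. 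Since all $H_k$ lie in the abelian group $(\Z/p^a)^f$, the elementary inequality $\rank A\leq\rank B+\rank(A/B)$ applied inductively along the chain gives $\sum_k\rank A_k'\geq\rank(\Z/p^a)^f=f$. Hence $\rank_{ab}(\{(\Z/p_i^{a_i})^f\}\acts E)=f$, so $(f,0)\in\mu_2(E)$ and $\D_2(E)\geq(f,0)$. Combined with Step 2 this gives $\D_2(E)=(f,0)$.

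\textbf{Main obstacle.} I expect the delicate point to be Step 3, specifically the rigidity statement $\rank_{ab}=f$: one must rule out that some longer iterated refinement of the torus action lowers the total abelian rank, which is exactly the covering-theoretic argument above and is in the spirit of the simplifiability results of \cite{daura2025IGG1}. Steps 1 and 2 are essentially bookkeeping once \cref{discsym2 bound} is available.
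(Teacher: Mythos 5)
Your proposal is correct and follows essentially the same route as the paper: identify the central extension $1\to\Z^f\to\pi_1(E)\to\pi_1(M)\to 1$, conclude that $\Inn\pi_1(E)\cong\pi_1(M)$ so that $\pi_1(E)$ is 2-step Minkowski, apply \cref{discsym2 bound} for the upper bound, and use the finite subgroups $(\Z/p^a)^f\subseteq T^f$ of the free torus action for the lower bound. Your Step 3 verification that $\rank_{ab}=f$ (via the Galois correspondence and the subadditivity of rank along a chain of subgroups of $(\Z/p^a)^f$) is a detail the paper leaves implicit, but it is correct.
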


\begin{proof}
	Consider the central short exact sequence
	\[\begin{tikzcd} 1\ar{r}{}& \Zc\pi_1(E)\cong\Z^f\ar{r}{}& \pi_1(E)\ar{r}{}& \pi_1(M)\ar{r}{}& 1. \end{tikzcd}\]
	Consequently, $\Inn\pi_1(E)\cong\pi_1(M)$ and $\pi_1(E)$ is 2-step Minkowski. Hence, by \cref{discsym2 bound}, $\D_2(M)\leq (f,0)$. 
	
	On the other hand, since $E$ has a free action of $T^f$, $E$ also admits free actions of $(\Z/p^{a})^f$ for any prime $p$ and positive integer $a$. Consequently, $\D_2(E)\geq (f,0)$. Thus, $\D_2(E)=(f,0)$.
\end{proof}

\section{Free iterated actions on closed aspherical 3-dimensional manifolds}\label{sec: iterated actions on 3-manifolds}

If $M$ is a closed $3$-dimensional aspherical manifold with an effective $S^1$ action, then $M$ can be one of the following four cases (see \cite[\S 14.4]{lee2010seifert}):
\begin{itemize}
	\item[1.] $M\cong T^3$.
	\item[2.] $M$ is homeomorphic to $K\times S^1$ or $SK$, where $K$ denotes the Klein bottle and $SK$ the non-trivial principal $S^1$-bundle over $K$.
	\item[3.] $M\cong H/\Gamma$, where $H$ is the $3$-dimensional Heisenberg group and $\Gamma $ is a lattice of $H$
	\item[4.] $\Zc\pi_1(M)\cong \Z$, and $\Inn \pi_1(M)\cong \pi_1(M)/ \Zc\pi_1(M)$ is centreless.
\end{itemize}

Note that in all cases, we have a central extension $$1\longrightarrow \Z\longrightarrow \pi_1(M)\longrightarrow Q\longrightarrow 1$$ where $Q$ acts effectively, properly and cocompactly on $\R^2$. If $M$ is a closed connected aspherical 3-manifold then $\Out(\pi_1(M))$ is Minkowski by \cite{kojima1984bounding} and $\A(M)=\D(M)=\rank\Zc\pi_1(M)$ by \cite[Corollary 8.3]{gabai1992convergence} and \cite[Theorem 1.1]{casson1994convergence}. Thus, it is straightforward to show that $\D(T^3)=3$, $\D(K\times S^1)=\D(SK)=2$, $\D(H/\Gamma)=1$, and $\D(M)=1$ if $M$ belongs to the case 4. Moreover if $M$ is a closed connected aspherical 3-manifold which does not belong to one of the four cases above, then $\A(M)=\rank\Zc\pi_1(M)=0$ and hence $\D(M)=0$.

We will compute the iterated discrete degree of symmetry and show that it can be used to distinguish the four cases of the above classification. We start by providing a different proof that $\Out(\pi_1(M))$ is Minkowski if $M$ belongs to one of these four cases.

\begin{lem}\label{out minkowski 3-manifolds}
	Let $M$ be a closed $3$-dimensional aspherical manifold with an effective $S^1$-action. Then $\Zc\pi_1(M)$ is finitely generated and $\Out(\pi_1(M))$ is Minkowski.
\end{lem}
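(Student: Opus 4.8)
The plan is to exploit the central extension $1\to \Z\to\pi_1(M)\to Q\to 1$, where $Q$ acts effectively, properly and cocompactly on $\R^2$, and to analyze $\Out(\pi_1(M))$ via the machinery of \Cref{out ses}. First I would observe that $\Zc\pi_1(M)$ is finitely generated: in cases 1--3 of the classification, $\pi_1(M)$ is a finitely generated torsion-free virtually polycyclic group (indeed the fundamental group of a nilmanifold or an infranilmanifold), so its center is finitely generated; in case 4 we are told directly that $\Zc\pi_1(M)\cong\Z$. In fact in all four cases $\Zc\pi_1(M)$ is a finitely generated free abelian group, and the subgroup $\Z$ appearing in the central extension can be taken to be (a finite-index subgroup of, hence after adjusting, equal to) $\Zc\pi_1(M)$, so that $Q\cong\Inn\pi_1(M)$ is centreless — this is automatic in case 4 by hypothesis, and in cases 1--3 one checks $\rank\Zc\pi_1(M)$ equals $3,2,1$ respectively and $Q$ is correspondingly $\Z^2$, a crystallographic plane group, or $\Z^2$, each centreless (here $\Z^2$ is vacuously its own inner automorphism group but one should be slightly careful: when $\Zc\pi_1(M)$ has rank $>1$ the ``$\Z$'' in the displayed extension is not the full center, so instead I would apply \Cref{out ses} to the central extension $1\to\Zc\pi_1(M)\to\pi_1(M)\to\Inn\pi_1(M)\to 1$ directly).

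The core of the argument is then \Cref{out ses} applied to this canonical central extension, in which $K=\Zc\pi_1(M)\cong\Z^k$ is characteristic, so $\Out(\pi_1(M),K)=\Out(\pi_1(M))$. The theorem gives short exact sequences expressing $\Out(\pi_1(M))$ as an extension of a subgroup $L_1\le\Out(\Inn\pi_1(M))$ by a group $\mathcal K$, which is in turn an extension of $\overline H^1_\psi(\Inn\pi_1(M),\Zc\pi_1(M))$ by a subgroup $L_2$ of $C_{\Out(\Zc\pi_1(M))}\psi(\Inn\pi_1(M))\le\Out(\Z^k)=\Gl(k,\Z)$. By \Cref{Minkowski ses}, to conclude $\Out(\pi_1(M))$ is Minkowski it suffices to show each of the three pieces is Minkowski: (i) $L_1\le\Out(\Inn\pi_1(M))=\Out(Q)$; (ii) $\overline H^1_\psi(Q,\Z^k)$, which is a quotient of $H^1_\psi(Q,\Z^k)$ by \Cref{out ses}, hence a finitely generated abelian group (since $Q$ is finitely generated and $\Z^k$ is), so Minkowski; and (iii) $L_2\le\Gl(k,\Z)$, which is Minkowski by Minkowski's classical theorem. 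So everything reduces to showing $\Out(Q)$ is Minkowski, where $Q$ is a centreless group acting effectively, properly and cocompactly on $\R^2$.

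The main obstacle — and the only nontrivial step — is therefore part (i): proving $\Out(Q)$ is Minkowski for $Q$ a centreless discrete cocompact group of isometries of the Euclidean or hyperbolic plane. If $Q$ is a (centreless, hence $\ne\Z^2$) planar crystallographic group, this follows from the generalized Bieberbach theory: $Q$ fits in $1\to\Z^2\to Q\to F\to1$ with $F$ finite, $\Z^2$ characteristic, and $\Out(\Z^2)=\Gl(2,\Z)$ is Minkowski, so \Cref{outer automorphism and finite index subgroups} (or again \Cref{out ses} plus \Cref{Minkowski ses}) applies. If instead $M$ is a Seifert fibered space over a hyperbolic orbifold (the generic situation inside case 4), then $Q$ is a cocompact Fuchsian group, virtually a surface group of genus $\ge 2$, and $\Out(Q)$ is virtually the mapping class group of the underlying orbifold, which is well known to be Minkowski — more simply, I would invoke the already-cited fact from \cite{kojima1984bounding} that $\Out$ of any closed aspherical $3$-manifold group is Minkowski, applied to a suitable aspherical surface or Seifert manifold, or cite the standard finiteness of finite subgroups in mapping class groups. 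Thus the strategy is: reduce via \Cref{out ses} to $\Out(Q)$, then dispatch $\Out(Q)$ by the crystallographic/Fuchsian dichotomy, and assemble with \Cref{Minkowski ses}.
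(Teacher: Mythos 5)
Your proposal is correct and follows essentially the same route as the paper: apply \cref{out ses} to the characteristic central extension $1\to\Zc\pi_1(M)\to\pi_1(M)\to\Inn\pi_1(M)\to 1$, reduce via \cref{Minkowski ses} to the Minkowski property of $\Out(\Inn\pi_1(M))$, of $\overline{H}^1$ (a quotient of a finitely generated abelian group) and of a subgroup of $\Gl(k,\Z)$, and then treat the Euclidean and hyperbolic cases of $\Inn\pi_1(M)$ separately. The only divergences are minor — the paper dispatches cases 1--3 at once by polycyclicity of $\pi_1(M)$ (via Wehrfritz) instead of running the machinery uniformly, and settles the Fuchsian case through virtual torsion-freeness of $\Out$ of Fuchsian groups rather than finiteness of finite subgroups of mapping class groups — and your passing claim that $\Inn\pi_1(M)$ is centreless in cases 1--3 is false for the Heisenberg manifold (there it is $\Z^2$), but this is harmless since your version of the $\overline{H}^1$ step does not use centrelessness.
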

\begin{proof}
	In the first three cases the fundamental group is polycyclic and therefore $\Out(\pi_1(M))$ is Minkowski (this is a consequence of Minkowski's lemma and \cite{Wehrfritz1994}). We only need to check the case where we have a short exact sequence $1\longrightarrow \Zc\pi_1(M)\cong\Z\longrightarrow \pi_1(M)\longrightarrow \Inn(\pi_1(M))\longrightarrow 1$ and $ \Inn(\pi_1(M))$ is centreless. Since $\Zc\pi_1(M)$ is a characteristic subgroup of $\pi_1(M)$ then there are short exact sequences $$1\longrightarrow K \longrightarrow \Out (\pi_1(M))\longrightarrow \Out(\Inn\pi_1(M))\longrightarrow 1$$ and $$1\longrightarrow \overline{H}^1(\Inn\pi_1(M),\Z) \longrightarrow K\longrightarrow \Gl(1,\Z)\longrightarrow 1.$$
	
	Recall that $\overline{H}^1(\Inn\pi_1(M),\Z)\cong Z^1(\Inn\pi_1(M),\Z)/\overline{B}^1(\Inn\pi_1(M),\Z)$ and $$\overline{B}^1(\Inn\pi_1(M),\Z)\cong p^{-1}(\Zc\Inn\pi_1(M)\cap C_{\pi_1(M)}(\Zc\pi_1(M)))/\Zc\pi_1(M)$$ where $p:\pi_1(M)\longrightarrow \Inn\pi_1(M)$ is the quotient map. Since $\Zc\Inn\pi_1(M)$ is trivial, the group $\overline{B}^1(\Inn\pi_1(M),\Z)$ is also trivial and $$\overline{H}^1(\Inn\pi_1(M),\Z)\cong Z^1(\Inn\pi_1(M),\Z)\cong H^1(\Inn\pi_1(M),\Z).$$ 
	
	The group $\Out(\pi_1(M))$ will be Minkowski if $\Out(\Inn\pi_1(M))$ and $H^1(\Inn\pi_1(M),\Z)$ are Minkowski. Since $\Inn\pi_1(M)$ acts effectively, properly and cocompactly on $\R^2$, $\Inn\pi_1(M)$ is a subgroup of isometries of the Euclidean plane or the hyperbolic plane. In both cases $H^1(\Inn\pi_1(M),\Z)$ is a finitely generated abelian group and therefore Minkowski. 
	
	If $\Inn\pi_1(M)$ is a subgroup of isometries of the Euclidean plane then it is virtually abelian and therefore $\Out(\Inn\pi_1(M))$ is Minkowski. 
	If $\Inn\pi_1(M)$ is a subgroup of isometries of the hyperbolic plane, then it contains a centreless torsion-free Fuchsian subgroup $Q$ of finite index. Since $\Out(Q)$ is virtually torsion-free, $\Out(\Inn\pi_1(M))$ is also virtually torsion-free (see \cite[Lemma 2.4, Corollary 2.6
	]{metaftsis2006residual}) and therefore Minkowski.
\end{proof}

The proof of \cref{main theorem10 intro} is a combination of \cref{inner actions simplifiable}, \cref{inner actions aspherical manifolds} and the following lemma.

\begin{lem}\label{discsym2 finiteness}
	Let $M$ be a closed aspherical manifold. Assume that $\Zc\pi_1(M)$ is finitely generated, that $\Out(\pi_1(M))$ is Minkowski and that there exists a constant $C$ satisfying that for every free inner action of an abelian group $A$ on $M$, $\Out(\pi_1(M/A))$ is Minkowski with a constant less or equal than $C$. Then $\D_2(M)=(d_1,0)$.
\end{lem}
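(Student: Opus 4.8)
The plan is to show that $\mu_2(M)$ contains no pair $(f,b)$ with $b\geq 1$, so that $\D_2(M)$ has the form $(d_1,0)$, and to identify $d_1$ with $\D(M)$. Suppose for contradiction that $(f,b)\in\mu_2(M)$ with $b\geq 1$. Then there is an increasing sequence of primes $\{p_i\}$, numbers $\{a_i\}$, and free iterated actions $\{(\Z/p_i^{a_i})^f,(\Z/p_i)^b\}\acts M$ with $\rank_{ab}=f+b$ for every $i$. As in Part 1 of the proof of \cref{main theorem9 intro}, since $\Out(\pi_1(M))$ is Minkowski the abstract kernel $\psi_i\colon(\Z/p_i^{a_i})^f\longrightarrow\Out(\pi_1(M))$ of the first step is trivial for $i$ large, so we may assume the first step is inner; write $A_i=(\Z/p_i^{a_i})^f$ and $M_i'=M/A_i$.

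Next I would analyze the second step. By \cref{inner actions aspherical manifolds} applied to the inner action of $A_i$, the manifold $M_i'$ is a closed aspherical manifold with $\rank\Zc\pi_1(M_i')=\rank\Zc\pi_1(M)$, and by hypothesis $\Out(\pi_1(M_i'))$ is Minkowski with a constant $\leq C$ independent of $i$. Hence the abstract kernel $\psi_i'\colon(\Z/p_i)^b\longrightarrow\Out(\pi_1(M_i'))$ of the second step is trivial for all $i$ large enough (here it is crucial that $C$ does not depend on $i$, since $p_i\to\infty$). So for $i$ large the second step is also an inner action. Now apply \cref{inner actions simplifiable} to the iterated action $\{A_i,(\Z/p_i)^b\}\acts M$: both steps being inner actions of abelian groups on a closed aspherical manifold, the iterated action is simplifiable by an abelian group $B_i$. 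This means $\{B_i\}\acts M\in[\{A_i,(\Z/p_i)^b\}\acts M]$, so $\rank_{ab}(\{A_i,(\Z/p_i)^b\}\acts M)\leq\rank B_i$.

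It remains to bound $\rank B_i$ strictly below $f+b$. By \cref{inner actions aspherical manifolds} applied twice (first to $A_i\acts M$, then to $(\Z/p_i)^b\acts M_i'$, which is inner), $\Zc\pi_1((M_i'/(\Z/p_i)^b))$ is an extension of $\Zc\pi_1(M)$ by $A_i$ and then by $(\Z/p_i)^b$, but its rank equals $\rank\Zc\pi_1(M)=:k$; equivalently $M/B_i\cong M/\{A_i,(\Z/p_i)^b\}$ has fundamental group with center of rank $k$. Since $B_i$ acts freely and (after the simplification) its action can be taken inner — or directly from the fact that $\rank\Zc\pi_1(M/B_i)\geq\rank B_i$ always holds for a free inner action with $\Zc\pi_1(M)$ finitely generated, by the extension description in \cref{inner actions aspherical manifolds} — we get $\rank B_i\leq k+$ (contribution of $B_i$), and more precisely the free action of $B_i$ forces $\rank B_i\leq\D(M)\leq k+ \text{(rank of }\Zc\pi_1(M)\text{)}$; in any case $\rank B_i$ is bounded by a constant $d_1$ independent of $i$, so for $b\geq 1$ we cannot have $\rank B_i=f+b$ for all $i$ with $f+b$ unbounded — and even for fixed $(f,b)$ we get $f+b=\rank_{ab}=\rank B_i\leq d_1$, and iterating shows $b$ can be absorbed, contradicting $\rank_{ab}=f+b$ with $b\geq 1$ once one checks that simplification strictly drops the total rank when a genuine second step is present. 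Therefore $\D_2(M)=(d_1,0)$ where $d_1=\D(M)$.

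\textbf{Main obstacle.} The delicate point is the last paragraph: turning ``the iterated action is simplifiable by an abelian group $B_i$'' into the sharp inequality $\rank_{ab}<f+b$ when $b\geq 1$. The cleanest route is to observe that the simplified single action $\{B_i\}\acts M$ has $M/B_i$ with $\rank\Zc\pi_1(M/B_i)=\rank\Zc\pi_1(M)$ (inner, by \cref{inner actions aspherical manifolds}), so $\rank B_i\leq\D(M)$, a constant; then $f+b=\rank_{ab}(\{A_i,(\Z/p_i)^b\}\acts M)\leq\rank B_i\leq\D(M)=d_1$ for every $i$, which already shows $\mu_2(M)$ is contained in the finite set of pairs summing to at most $d_1$, and the component $d_1$ is achieved by $(\Z/p^a)^{d_1}$-actions witnessing $\D(M)=d_1$, forcing $\D_2(M)=(d_1,0)$. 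I expect verifying that the simplification preserves the rank of the center — equivalently that $B_i$ may be chosen so its action on $M$ is inner — to require a short argument using the two vertical short exact sequences of \cref{aspherical manifolds big diagram} together with \cref{inner actions aspherical manifolds}.
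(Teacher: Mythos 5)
Your reduction is exactly the paper's: use the Minkowski hypothesis on $\Out(\pi_1(M))$ to make the first step inner, use the uniform constant $C$ to make the second step inner (this is where the hypothesis on quotients $M/A$ enters), and then invoke \cref{inner actions simplifiable} to replace the iterated action by a single free action of an abelian group $B_i$. Up to that point the proposal is correct and matches the paper.

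The concluding step, however, has a genuine gap, and the inequality you need points the other way from the one you use. From the definition of $\rank_{ab}$ as a \emph{minimum} over equivalent decompositions, the hypothesis $\rank_{ab}=d_1+d_2$ gives $\rank B_i\geq d_1+d_2$ (your displayed chain ``$f+b=\rank_{ab}=\rank B_i$'' asserts equality, and elsewhere you try to bound $\rank B_i$ from \emph{above}). The lower bound is what one wants: $B_i$ is an abelian $p_i$-group of rank at least $d_1+d_2$, so it contains $(\Z/p_i)^{d_1+d_2}$ acting freely on $M$ for an increasing sequence of primes $p_i$; since for an elementary abelian deck group every equivalent abelian decomposition has total rank equal to $d_1+d_2$, this puts $(d_1+d_2,0)$ in $\mu_2(M)$, and lexicographic maximality of $(d_1,d_2)$ then forces $d_1\geq d_1+d_2$, i.e.\ $d_2=0$. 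Your alternative route, $f+b\leq\rank B_i\leq\D(M)=d_1$, rests on the identification $\D(M)=d_1$, which is not available: only $d_1\leq\D(M)$ is automatic (witnesses for $\D(M)$ need be neither free nor supported on arbitrarily large primes, so they need not produce elements of $\mu_2(M)$), and even granting it, applying the bound to an arbitrary pair $(f,b)$ rather than to the maximal one does not yield a contradiction when $f<d_1$. Relatedly, the issues you flag as the ``main obstacle'' --- whether $B_i$ can be chosen to act innerly, and whether simplification ``strictly drops the total rank'' --- are not needed: the argument uses only that $B_i$ is an abelian $p_i$-group together with the defining minimality of $\rank_{ab}$.
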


\begin{proof}
	Assume that $\D_2(M)=(d_1,d_2)$. We have an increasing sequence of primes $\{p_i\}_{i\in \mathbb{N}}$, a sequence of positive integers $\{a_i\}_{i\in \mathbb{N}}$ and a collection of free iterated actions $\{(\Z/p_i^{a_i})^{d_1},(\Z/p_i)^{d_2}\}\acts M$ satisfying that $$\rank_{ab}(\{(\Z/p_i^{a_i})^{d_1},(\Z/p_i)^{d_2}\}\acts M)=(d_1,d_2).$$ We can assume that $p_i>C$ for all $i$. In this case, $\psi_i:(\Z/p_i^{a_i})^{d_1}\longrightarrow \Out(\pi_1(M))$ and $\psi_i':(\Z/p_i)^{d_2}\longrightarrow \Out(\pi_1(M))$ are trivial for all $i$. By \cref{inner actions simplifiable}, this iterated action is simplifiable, which implies that there exists a free group action of $(\Z/p_i)^{d_1+d_2}$ on $M$. But then $d_1+d_2\leq d_1$, which implies that $d_2=0$.
\end{proof}

From \cref{discsym2 finiteness} we can deduce the following corollary, which proves part (2) of \cref{main theorem10 intro}. 

\begin{cor}\label{iterated discsym flat manifolds}
	Let $M$ be a closed flat manifold. Then $\D_2(M)=(\rank\Zc\pi_1(M),0)$.
\end{cor}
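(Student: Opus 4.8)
The plan is to deduce \cref{iterated discsym flat manifolds} directly from \cref{discsym2 finiteness}, so the work reduces to checking that a closed flat manifold $M$ satisfies the three hypotheses of that lemma, and then identifying the resulting number $d_1$ with $\rank\Zc\pi_1(M)$. First I would recall that $\pi_1(M)$ is a Bieberbach group, in particular a crystallographic group, so it is finitely generated virtually abelian; its center $\Zc\pi_1(M)$ is then finitely generated (it is a subgroup of the translation lattice $\Z^n$), giving the first hypothesis. For the second hypothesis, $\pi_1(M)$ is polycyclic, so $\Out(\pi_1(M))$ is Minkowski by the combination of Minkowski's lemma and the result of Wehrfritz cited in the proof of \cref{out minkowski 3-manifolds}; this is a standard fact for polycyclic groups.

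The key point is the uniform Minkowski bound in the third hypothesis. Here I would use that if $A$ is a finite abelian group acting freely on $M$ by an \emph{inner} action, then $M/A$ is again a closed flat manifold: indeed $\pi_1(M/A)$ is torsion-free (free action), virtually abelian (it is commensurable with $\pi_1(M)$, being an extension of $\pi_1(M)$ by the finite group $A$), hence a Bieberbach group of the same dimension $n$ by the Borel/Bieberbach rigidity for flat manifolds. Crucially, by the generalized third Bieberbach theorem (\cref{generalized Bieberbach theorems}.3, or rather its classical predecessor for $\Lambda=\Z^n$) there are only \emph{finitely many} isomorphism classes of $n$-dimensional Bieberbach groups containing $\Z^n$ as translation lattice, so as $A$ ranges over all inner free actions the group $\pi_1(M/A)$ takes only finitely many isomorphism types; taking $C$ to be the maximum of the (finite) Minkowski constants of $\Out$ of these finitely many groups gives the required uniform bound. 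This lets \cref{discsym2 finiteness} apply and yields $\D_2(M)=(d_1,0)$ for some $d_1$.

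It remains to show $d_1=\rank\Zc\pi_1(M)$. For the lower bound, $M$ admits a free $T^k$-action where $k=\rank\Zc\pi_1(M)$ (the central torus acts on the flat manifold), hence free actions of $(\Z/p^a)^k$ for all primes $p$ and all $a$, and since these actions are inner and have the correct rank one checks $(k,0)\in\mu_2(M)$, so $\D_2(M)\geq(k,0)$. For the upper bound, since $\D_2(M)=(d_1,0)$ it suffices to bound $d_1$ by $\D(M)$, and then use $\D(M)=\rank\Zc\pi_1(M)$ for flat manifolds (this is part of Theorem~\ref{discsym rigidity}.2, applicable since $\pi_1(M)$ is virtually abelian hence $\Out(\pi_1(M))$ is Minkowski, or alternatively the first step of a free iterated action realizing $\D_2(M)=(d_1,0)$ is already a single free action of $(\Z/p_i^{a_i})^{d_1}$, giving $d_1\in\mu(M)$ and hence $d_1\leq\D(M)=k$). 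Combining the two bounds gives $d_1=k=\rank\Zc\pi_1(M)$.

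I expect the main obstacle to be the bookkeeping in the third hypothesis of \cref{discsym2 finiteness} — specifically, making sure that "inner action" plus "flat" really does force $M/A$ to be flat of the same dimension and that only finitely many fundamental groups arise. The flatness of $M/A$ is not automatic from torsion-freeness alone; it uses that $\pi_1(M/A)$ is virtually abelian of the right Hirsch length together with the Bieberbach characterization of flat manifolds, so I would want to state that step carefully rather than wave at it. Everything else is either a direct citation (polycyclic $\Rightarrow$ $\Out$ Minkowski; $\D$ of a flat manifold equals $\rank$ of the center) or a short verification against the definitions of $\mu_2$ and $\rank_{ab}$.
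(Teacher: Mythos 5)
Your main step --- verifying the third hypothesis of \cref{discsym2 finiteness} by showing that $M/A$ is again a closed flat $n$-manifold for every free inner abelian action, and invoking the finiteness of isomorphism classes of $n$-dimensional Bieberbach groups to obtain a uniform Minkowski constant $C$ --- is exactly the paper's proof, and you fill in details (torsion-freeness, virtual abelianness, same dimension) that the paper only asserts. The identification $d_1=\rank\Zc\pi_1(M)$, which the paper leaves implicit, is also organized correctly as a lower bound plus an upper bound.

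There is, however, one false claim in your lower bound: a closed flat manifold does \emph{not} in general admit a free $T^k$-action with $k=\rank\Zc\pi_1(M)$. The Klein bottle $K$ already fails: $\rank\Zc\pi_1(K)=1$, but the central circle action (translation in the direction fixed by the holonomy) has an order-two element with fixed points, and in fact $K$ admits no free $S^1$-action at all, since a free circle action would exhibit $K$ as a principal $S^1$-bundle over $S^1$ and hence force $K\cong T^2$. What is true, and what you actually need, is that the finite subgroups $(\Z/p^a)^k\cong\tfrac{1}{p^a}\Zc\pi_1(M)/\Zc\pi_1(M)$ act freely on $M$ for every prime $p$ not dividing the order of the holonomy group $\pi_1(M)/\Z^n$: if $\tfrac{1}{p^a}v$ fixes a point then $\tfrac{1}{p^a}v=(A-I)x+w$ for some $(w,A)\in\pi_1(M)$, and applying the averaging projection onto $\ker(A-I)$ shows $\tfrac{m}{p^a}v\in\Zc\pi_1(M)$ with $m$ the order of $A$, whence $v\in p^a\Zc\pi_1(M)$ since $\gcd(m,p)=1$. (Alternatively, cite that $\D(M)=\rank\Zc\pi_1(M)$ for infranilmanifolds is realized by free abelian actions, as the paper does in the proof of \cref{main theorem9 intro}.) With that correction the lower bound $(k,0)\in\mu_2(M)$ goes through --- note that $\rank_{ab}=k$ is automatic because the deck group $(\Z/p^a)^k$ cannot be generated by fewer than $k$ elements --- and the rest of your argument is sound.
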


\begin{proof}
	Given any abelian group $A$ acting freely on a $n$-dimensional closed flat manifold $M$ such that $\psi:A\longrightarrow \Out(\pi_1(M))$, the quotient space $M/A$ is a closed flat manifold of the same dimension and hence $\pi_1(M/A)$ is a Bieberbach group. Because there is a finite number of isomorphism classes of Bieberbach groups in each dimension and their outer automorphisms group is Minkowski, we can use \cref{discsym2 finiteness} by taking $C$ to be the maximum of all Minkowski constants of the outer automorphism group of Bieberbach groups of flat manifolds of dimension $n$. 
\end{proof}

\begin{proof}[Proof of \cref{main theorem10 intro}]
	By \cref{main theorem9 intro}, we have already proved that $\D_2(M)=(3,0)$ if $M\cong T^3$ and that $\D_2(M)=(1,2)$ if $M\cong H/\Gamma$, as a consequence \cref{main theorem9 intro}. Moreover, $SK$ and $K\times S^1$ are closed flat manifolds, which implies that $\D_2(K\times S^1)=\D_2(SK)=(2,0)$ (see \cref{iterated discsym flat manifolds}). Therefore, we only need to check the last case.
	
	Let $M$ be a $3$-aspherical manifold of the case 4 and assume that the Minkowski constant of $\Out(\pi_1(M))$ is $C$. Since $\D(M)=\A(M)=\rank\Zc\pi_1(M)=1$, there exist an increasing sequence of primes $\{p_i\}_{i\in\mathbb{N}}$ and a sequence of positive integers $\{a_i\}_{i\in\mathbb{N}}$ such that $\Z/p_i^{a_i}$ acts freely on $M$ and $p_i>C$ for all $i$. Therefore each action induces the trivial group morphism $\psi_i:\Z/p_i^{a_i}\longrightarrow \Out(\pi_1(M))$ and a commutative diagram
	\[
	\begin{tikzcd}
		& 1\ar{d}{}& 1\ar{d}{}& & \\
		1\ar{r}{} & \Zc \pi_1(M)\cong \Z \ar{r}{}\ar{d}{} & C_{G_i}(\pi_1(M))=\Zc G_i\ar{r}{}\ar{d}{}& \Z/p_i^{a_i}\ar{r}{}\ar{d}{Id} &1\\
		1\ar{r}{} & \pi_1(M) \ar{r}{}\ar{d}{} & G_i\ar{r}{}\ar{d}{}& \Z/p_i^{a_i}\ar{r}{} &1\\
		& \Inn \pi_1(M)\ar{d}{}\ar{r}{Id} & \Inn \pi_1(M)\ar{d}{} & &\\
		& 1 &1 & & \\
	\end{tikzcd}
	\]
	where $G_i=\pi_1(M/(\Z/p_i^{a_i}))$. Note that $\Inn\pi_1(M)$ is centreless. 
	
	We claim that $\Out(G_i)$ is Minkowski with a constant that does not depend on $i$. Since $\Zc G_i$ is a characteristic subgroup of $G_i$ and $\Inn\pi_1(M)$ is centreless there exist two short exact sequence (see \cref{out minkowski 3-manifolds}) $$1\longrightarrow K \longrightarrow \Out (G_i)\longrightarrow \Out(\Inn\pi_1)\longrightarrow 1$$ and $$1\longrightarrow H^1(\Inn\pi_1(M),\Z) \longrightarrow K\longrightarrow \Gl(1,\Z)\longrightarrow 1.$$
	
	The Minkowski constant of the group $\Out(G_i)$ can be bounded by the Minkowski constants of $\Out(\Inn\pi_1(M))$, $\Gl(1,\Z)$ and $H^1(\Inn\pi_1(M),\Z)$, which do not depend on $p_i$. Thus, we can use \cref{discsym2 finiteness} to conclude that $\D_2(M)=(1,0)$.
\end{proof}

\begin{rem}
	The key observation of the proof of \cref{main theorem10 intro} is that if $\Inn\pi_1(M)$ is centreless then we have $\bar{H}^1(\Inn\pi_1(M),\Z)=H^1(\Inn\pi_1(M),\Z)$, which does not depend on the action of $(\Z/p_i^{a_i})$ on $M$. In general, $\bar{H}^1(\Inn\pi_1(M),\Z)$ does depend on the action of $(\Z/p_i^{a_i})$ on $M$.
\end{rem}

\section{The length of a free iterated action}\label{sec:lenght iterated actions}

A natural question is whether for a closed manifold $M$, there exists a constant $C$ such that $l(M)\leq C$. For example:

\begin{lem}\label{free iterated action lenght Euler char}
	Let $M$ be a closed manifold with $\chi(M)\neq 0$. Then $l(M)\leq \log_2\chi(M)$.
\end{lem}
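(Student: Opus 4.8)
\textbf{Proof strategy for Lemma \ref{free iterated action lenght Euler char}.}

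The plan is to exploit the multiplicativity of the Euler characteristic under finite covering maps. Suppose $\mathcal{G} = \{G_1, \dots, G_n\} \acts M$ is any free iterated action realizing the maximum in the definition of $l(M)$, so that $l(\mathcal{G} \acts M) = n = l(M)$; in particular $\mathcal{G} \acts M$ is not equivalent to any free iterated action of length $< n$. Writing $M = M_0$ and $M_i = M_{i-1}/G_i$, each orbit map $p_i \colon M_{i-1} \longrightarrow M_i$ is a free action quotient by a finite group, hence a covering map of degree $|G_i|$ between closed manifolds. Therefore $\chi(M_{i-1}) = |G_i| \cdot \chi(M_i)$ for each $i$, and composing, $\chi(M) = |G_1| \cdots |G_n| \cdot \chi(M_n)$.

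The key step is to argue that we may assume each $|G_i| \geq 2$, so that $|G_1| \cdots |G_n| \geq 2^n$. Indeed, if some $G_i$ were trivial, then $p_i$ would be a homeomorphism and we could simply delete $G_i$ from the collection $\mathcal{G}$, obtaining an equivalent free iterated action of length $n-1$, contradicting minimality of $l(\mathcal{G} \acts M)$. (Here I am using that $l(\mathcal{G} \acts M)$ is defined as a minimum over the equivalence class, so the witnessing $\mathcal{G}$ itself can be taken with all groups nontrivial.) Combining with $|\chi(M_n)| \geq 1$ — which holds because $\chi(M_n)$ is a nonzero integer, as $\chi(M) \neq 0$ forces $\chi(M_n) \neq 0$ via the product formula — we get $|\chi(M)| \geq 2^n \cdot 1 = 2^n$, hence $n \leq \log_2 |\chi(M)|$. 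Since this holds for the iterated action achieving $l(M)$, we conclude $l(M) \leq \log_2 \chi(M)$ (the statement implicitly takes $\chi(M) > 0$, or one reads $\chi(M)$ as $|\chi(M)|$).

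I expect no serious obstacle here; the only point requiring a little care is the reduction to all-nontrivial groups, and the observation that $\chi(M) \neq 0$ propagates down the tower so that the final factor $\chi(M_n)$ contributes at least $1$ in absolute value rather than $0$. One should also note in passing that $\chi(M) \neq 0$ already implies every free iterated action on $M$ is finite in length with the stated bound, so $l(M)$ is well-defined (not $\infty$) in this case — which is presumably the point of stating the lemma.
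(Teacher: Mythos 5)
Your proof is correct and follows essentially the same route as the paper: multiplicativity of $\chi$ along the tower of free quotients, reduction to all $|G_i|\geq 2$, and the bound $|\chi(M)|\geq 2^n$. The extra care you take about deleting trivial groups and about reading $\chi(M)$ as $|\chi(M)|$ is welcome but does not change the argument.
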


\begin{proof}
	Recall that if $G$ is a finite group acting freely on $M$ then $\chi(M)=|G|\chi(M/G)$. If we have an iterated free action $\mathcal{G}\acts M$ then $\chi(M)=\prod_{i=1}^n |G_i| \chi(M/\mathcal{G})$. Since $|G_i|\geq 2$ for all $i$, then $n\leq \log_2\chi(M)$.
\end{proof}

The first main result of this section bounds the iterated length of nilmanifolds. Let $N/\Gamma$ be a $c$-step nilmanifold and let $\{0\}=Z_0\trianglelefteq Z_1\trianglelefteq\cdots\trianglelefteq Z_c=\Gamma$ be the upper central series. The groups $Z_{i+1}/Z_i$ are finitely generated torsion-free and abelian, so we denote by $b_i=\rank(Z_{i+1}/Z_i)$. Then:

\begin{thm}\label{free iterated actions on nilmanifolds}
	There exists a constant $C$ only depending on $\Gamma$ such that any free iterated action $\mathcal{G}\acts N/\Gamma$ is equivalent to a free iterated action $\mathcal{G}'\acts N/\Gamma$ where $\mathcal{G}'=\{A_1,\dots,A_c,G'\}$, $A_i$ are finite abelian groups such that $\rank(A_i)\leq b_i$ and $|G'|\leq C$.
\end{thm}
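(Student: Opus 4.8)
The plan is to observe that, up to equivalence, a free iterated action $\mathcal{G}\acts M$ with $M=N/\Gamma$ is nothing more than the single composite finite covering $p\colon M\to M_n$ onto the last space $M_n=M/\mathcal{G}$, and then to refactor that covering through a standard tower. First I would record what $M_n$ is. Each step of the iterated action is the orbit map of a free action on a connected space, hence a regular covering, so $\pi_1(M_{j-1})\trianglelefteq\pi_1(M_j)$ for all $j$; thus $\Gamma=\pi_1(M)$ is \emph{subnormal} in $\Pi:=\pi_1(M_n)$. Also $M_n$ is a closed aspherical manifold (finitely covered by $M$), so $\Pi$ is torsion-free, and being virtually polycyclic and containing the nilpotent finite-index subgroup $\Gamma$ it is an AC-group (corollary to \cref{AC-groups charactherization}); set $\Lambda:=\Fitt(\Pi)$, which by \cref{AC-groups charactherization} is torsion-free, maximal nilpotent and of finite index in $\Pi$. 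Since the Fitting subgroup is monotone along normal inclusions (if $H\trianglelefteq K$ then $\Fitt(H)$ is characteristic in $H$, hence a normal nilpotent subgroup of $K$, so $\Fitt(H)\subseteq\Fitt(K)$), climbing the subnormal chain from $\Gamma$ to $\Pi$ yields $\Gamma=\Fitt(\Gamma)\subseteq\Fitt(\Pi)=\Lambda$.

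Next I would build the factorization. The groups $\Gamma\subseteq\Lambda$ are commensurable, so $\Lambda$ has the same rational Mal'cev completion as $\Gamma$, hence the same nilpotency class $c$ and the same central-layer ranks $\rank\bigl(Z_{i+1}(\Lambda)/Z_i(\Lambda)\bigr)=b_i$. Set $\Delta_i:=\Gamma\cdot Z_i(\Lambda)$, a subgroup since $Z_i(\Lambda)\trianglelefteq\Lambda$. Then $\Delta_0=\Gamma$, $\Delta_c=\Lambda$, and using $[\Lambda,Z_{i+1}(\Lambda)]\subseteq Z_i(\Lambda)$ together with the fact that $\Gamma$ normalizes each $\Delta_i$ one checks $\Delta_i\trianglelefteq\Delta_{i+1}$; the quotient $A_{i+1}:=\Delta_{i+1}/\Delta_i$ is a subquotient of the finite group $\Lambda/\Gamma$ and a quotient of the abelian group $Z_{i+1}(\Lambda)/Z_i(\Lambda)$, hence a finite abelian group of rank $\le b_i$. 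Passing to the covering spaces of $M_n$ attached to the chain $\Gamma=\Delta_0\trianglelefteq\cdots\trianglelefteq\Delta_c=\Lambda\trianglelefteq\Pi$ of finite-index subgroups of $\Pi$ produces a tower $M=X_0\to X_1\to\cdots\to X_c\to X_{c+1}=M_n$ in which each map is the orbit map of the free deck-transformation action of $A_i$ (first $c$ steps) and of $F:=\Pi/\Lambda$ (last step), and whose total covering is again $p$. Hence the free iterated action $\{A_1,\dots,A_c,F\}\acts M$ is equivalent to $\mathcal{G}\acts M$, with $G':=F$; note that the whole construction lives inside covering-space theory over $M_n$, so no appeal to the Borel conjecture is needed.

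The heart of the matter is the uniform bound $|F|\le C$ with $C$ depending only on $\Gamma$. Since $\Lambda$ is maximal nilpotent in $\Pi$, \cref{infranilmanifolds malcev completion injective} tells us the abstract kernel $\phi_\Q\colon F\to\Out(\Lambda_\Q)$ of $1\to\Lambda\to\Pi\to F\to 1$ is injective. I would then show that the representation $\overline{\phi}\colon F\to\Aut\bigl(\Lambda/\sqrt{[\Lambda,\Lambda]}\bigr)\cong\Gl(d,\Z)$ on the maximal torsion-free abelian quotient is already faithful, where $d=\dim_\Q\bigl(\mathcal{L}(\Gamma)_\Q/[\mathcal{L}(\Gamma)_\Q,\mathcal{L}(\Gamma)_\Q]\bigr)$ is a commensurability invariant of $\Gamma$: if $f\in\ker\overline{\phi}$ is lifted to $\pi\in\Pi$, the conjugation $c_{\pi,\Q}$ on $\Lambda_\Q$ acts trivially on the abelianization $\mathcal{L}(\Gamma)_\Q^{\mathrm{ab}}$, hence is unipotent on $\mathcal{L}(\Gamma)_\Q$, so its image in the linear algebraic group $\Out(\Lambda_\Q)=\Aut(\mathcal{L}(\Gamma)_\Q)/\Inn(\mathcal{L}(\Gamma)_\Q)$ is a unipotent element, which in characteristic zero has finite order only if trivial; as $f$ has finite order this gives $\phi_\Q(f)=1$, whence $f=1$. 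Therefore $F$ embeds into $\Gl(d,\Z)$, and Minkowski's theorem bounds $|F|$ by a constant $C=C(d)=C(\Gamma)$.

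The main obstacle is exactly this uniform bound: a priori $F$ only embeds into $\Out(\Lambda)$, and over $\Q$ the finite subgroups of $\Out(\mathcal{L}(\Gamma)_\Q)$ are not of bounded order — already $\Gl(d,\Q)$ has finite subgroups of every order — so the whole point is to use the maximal nilpotency of $\Lambda=\Fitt(\Pi)$ to descend to the honest \emph{integral} representation on $\Lambda/\sqrt{[\Lambda,\Lambda]}$ and then invoke Minkowski. By contrast the containment $\Gamma\subseteq\Fitt(\Pi)$ and the upper-central-series factorization are routine once the subnormality of $\Gamma$ in $\Pi$ is noticed, the only care being the normality verifications for the $\Delta_i$ and the bookkeeping of layer ranks (one should match the indexing of the $A_i$ against the convention $b_i=\rank(Z_{i+1}/Z_i)$). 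As an immediate corollary one obtains $l(N/\Gamma)\le c+1$.
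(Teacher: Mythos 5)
Your proposal is correct and has the same overall skeleton as the paper's proof --- locate $\Gamma$ inside $\Lambda=\Fitt(\Pi)$ for $\Pi=\pi_1(M_n)$, refine $\Gamma\leq\Lambda$ along the upper central series, and bound $|\Pi/\Lambda|$ through the rational Mal'cev completion --- but two of the three key steps are done by genuinely different arguments. For the inclusion $\Gamma\leq\Fitt(\Pi)$, the paper's \cref{itertaed actions on nilmanifolds and Fitt} compares the two induced maps $G\to\Out(\Gamma_\Q)$ coming from $\Fitt(E)\cap\Gamma\leq\Gamma$ and from $\Fitt(E)\leq\Lambda$, invoking \cref{infranilmanifolds malcev completion trivial} and \cref{infranilmanifolds malcev completion injective}; you instead use the subnormality of $\Gamma$ in $\Pi$ supplied by the tower of regular coverings together with the monotonicity of the Fitting subgroup along normal inclusions. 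Your route is more elementary and makes visible exactly where the iterated structure enters, while the paper's argument is formally stronger in that it needs only finite index, not subnormality. For the bound on $|G'|$, the paper's \cref{out rational malcev completion} lifts a finite subgroup of $\Out(\Lambda_\Q)$ to $\Aut(\Lambda_\Q)\cong\Aut(\mathcal{L}(\Gamma)_\Q)\leq\Gl(m,\Q)$ using the vanishing of $H^2$ with divisible coefficients and then conjugates it into $\Gl(m,\Z)$; you instead prove that $F=\Pi/\Lambda$ acts faithfully on the integral abelianization $\Lambda/\sqrt{[\Lambda,\Lambda]}$ by combining the injectivity of $\phi_\Q$ with the fact that a finite-order unipotent class in $\Aut(\mathcal{L}(\Gamma)_\Q)/\Inn(\mathcal{L}(\Gamma)_\Q)$ is trivial in characteristic zero. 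Both yield a constant depending only on the commensurability class of $\Gamma$; yours lands in a smaller integral linear group and avoids the cohomological lifting, at the cost of the (standard, but worth writing out) facts that $\Inn(\mathcal{L}(\Gamma)_\Q)$ is a normal unipotent algebraic subgroup and that unipotency passes to the quotient. Your explicit chain $\Delta_i=\Gamma\cdot Z_i(\Lambda)$ is an induction-free version of the paper's \cref{free iterated actions nilmanifolds quotient nilmanifold}, and the off-by-one in the layer ranks that you flag is an indexing convention already present in the paper's own statement.
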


\begin{cor}\label{free iterated actions on nilmanifolds lenght}
	If $N/\Gamma$ is a $c$-step nilmanifold then $l(N/\Gamma)\leq c+1$.
\end{cor}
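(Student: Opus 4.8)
\textbf{Proof strategy for \Cref{free iterated actions on nilmanifolds}.} The plan is to peel off the iterated action one abelian layer at a time, using the structure of the upper central series of $\Gamma$. First I would reduce to the case of a free iterated action $\{G_1,\dots,G_n\}\acts N/\Gamma$ in which every $G_j$ is abelian: this is legitimate because the first step of \emph{any} iterated action $\{G_1,\dots\}$ induces a morphism to $\Out(\pi_1(N/\Gamma))=\Out(\Gamma)$, which is Minkowski (\Cref{autoutinn minkowski}, since $\Gamma$ is a lattice in a connected Lie group), so after bounding the non-inner part into a controlled finite group $G'$ at the very end one is left with inner actions; and \Cref{inner actions simplifiable} (together with \Cref{inner actions aspherical manifolds}) lets one merge consecutive inner abelian steps. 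The heart of the matter is therefore: given a free iterated action of finite abelian groups on $N/\Gamma$, show it is equivalent to one of length at most $c$ (plus the bounded remainder), with the $i$-th layer of rank $\le b_i$.

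The key mechanism is induction on the nilpotency class $c$. For the inductive step, let $A=G_1$ act freely on $N/\Gamma$ with quotient $N/\Gamma_1$; by \Cref{aspherical manifolds big diagram} and \Cref{infranilmanifolds malcev completion injective}/\Cref{infranilmanifolds malcev completion trivial}, since $\Gamma_1$ is torsion-free and must again be a finitely generated torsion-free nilpotent group (its universal cover's quotient has nonzero top cohomology, as in Part 3 of the proof of \Cref{main theorem9 intro}), $N/\Gamma_1$ is again a nilmanifold, of class $\le c$. I would then analyze how the action of $A$ interacts with the center: restricting automorphisms to $\Zc\Gamma$ gives $A\to\Aut(\Zc\Gamma)=\Gl(b_0',\Z)$ where $\Zc\Gamma$ has some rank; the image is a finite subgroup of $\Gl(\Z)$, hence bounded by Minkowski, and the ``rank-preserving'' part (kernel of this map, by \Cref{discsym coverings}) enlarges the center by $A$, contributing rank $\le b_{c-1}$ to the outermost layer $A_c$. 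Quotienting out the central part of $A$ reduces to an action on the nilmanifold $N'/\Gamma'$ of strictly smaller class $c-1$ (where $\Gamma'=\Gamma/\Zc\Gamma$), to which the inductive hypothesis applies, giving layers of ranks $\le b_0,\dots,\le b_{c-2}$ plus a bounded remainder. Reassembling, and absorbing all the finitely-many bounded $\Gl(\Z)$-type contributions and non-inner contributions into a single finite group $G'$ whose order is bounded by a constant $C=C(\Gamma)$ (using \Cref{Minkowski ses} repeatedly and the fact, as in Part 1 of the proof of \Cref{main theorem9 intro}, that the relevant Minkowski constants along the tower do not grow), yields the claimed normal form $\{A_1,\dots,A_c,G'\}$ with $\rank A_i\le b_{i-1}$.

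The main obstacle I anticipate is the bookkeeping needed to guarantee that the constant $C$ depends only on $\Gamma$ and not on the depth $n$ of the original tower. This requires the observation---exactly as in the proof of \Cref{main theorem9 intro}, Parts 1--2---that the groups $\Inn$ and $\Zc$ of the successive fundamental groups $\pi_1$ of the quotients are all isomorphic to fixed subquotients determined by $\Gamma$ (up to finite ambiguity controlled by the generalized third Bieberbach theorem, \Cref{generalized Bieberbach theorems}.3), so the Minkowski constants entering \Cref{out ses}, \Cref{aut ses} and \Cref{Minkowski ses} are uniform along the tower. A secondary point requiring care is that at each stage the quotient of a nilmanifold by a free finite \emph{inner} abelian action is again a nilmanifold (not merely an infranilmanifold): this is where \Cref{infranilmanifolds malcev completion trivial} is used, since triviality of the abstract kernel into $\Out(\Lambda_\Q)$ forces the extension to be nilpotent; combined with torsion-freeness (preserved because the action is free) one gets a genuine lattice, and the top-cohomology argument shows the covering $\tilde M/\Gamma_{\mathrm{new}}\to M$ is finite so the dimension is preserved.

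\Cref{free iterated actions on nilmanifolds lenght} is then immediate: \Cref{free iterated actions on nilmanifolds} exhibits every free iterated action as equivalent to one with collection $\{A_1,\dots,A_c,G'\}$ of length $c+1$, so $l(\mathcal{G}\acts N/\Gamma)\le c+1$ for every free iterated action $\mathcal{G}\acts N/\Gamma$, whence $l(N/\Gamma)\le c+1$.
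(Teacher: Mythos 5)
Your closing paragraph — deducing $l(N/\Gamma)\le c+1$ from \Cref{free iterated actions on nilmanifolds} — is exactly how the paper obtains this corollary: the theorem exhibits every free iterated action as equivalent to one of length $c+1$, so the bound on $l(N/\Gamma)$ is immediate. That step is correct and identical to the paper's.

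The bulk of your proposal, however, is a sketch of \Cref{free iterated actions on nilmanifolds} itself, and there your route diverges from the paper's and has a genuine gap. You propose to reduce to the case where every $G_j$ is abelian and acts innerly, invoking the Minkowski property of $\Out(\Gamma)$ and then merging consecutive inner steps via \Cref{inner actions simplifiable}. But the Minkowski property only bounds the order of the image of $G_j\to\Out(\pi_1)$; it does not make that image trivial, and — unlike in the proof of \Cref{main theorem9 intro} — there is no sequence of primes $p_i\to\infty$ available to force triviality. Moreover, the $G_j$ in an arbitrary free iterated action are arbitrary finite groups, and ``pushing the non-inner part to the end'' amounts to rearranging the subnormal chain $\Gamma=E_0\trianglelefteq E_1\trianglelefteq\cdots\trianglelefteq E_n=\pi_1(M)$ so that a large nilpotent normal subgroup sits at the bottom; that rearrangement is essentially the content of the theorem, not a harmless preliminary reduction. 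The paper proceeds top-down instead: it proves $\Gamma\le\Fitt(\pi_1(M))=\Lambda$ (\Cref{itertaed actions on nilmanifolds and Fitt}, by observing that the induced map $p(\Gamma)\to\Out(\Gamma_\Q)$ is simultaneously trivial, because $\Gamma$ is nilpotent, and injective, because $\Fitt(\pi_1(M))$ is maximal nilpotent via \Cref{infranilmanifolds malcev completion injective}, forcing $p(\Gamma)=1$); it then interpolates a subnormal chain $\Gamma=\Lambda_0\trianglelefteq\cdots\trianglelefteq\Lambda_c=\Lambda$ with abelian quotients of rank $\le b_i$ purely at the level of lattices (\Cref{free iterated actions nilmanifolds quotient nilmanifold}, by induction on $c$ via $N\to N/\Zc N$), and bounds $|\pi_1(M)/\Lambda|$ by \Cref{out rational malcev completion}. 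Your induction on the nilpotency class is in the right spirit — it mirrors the interpolation lemma — but running it with actual group actions on successive quotient nilmanifolds, rather than algebraically with lattices, raises freeness and bookkeeping issues (e.g.\ the induced action on $N'/\Gamma'$ need not be free) that the paper's formulation sidesteps.
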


We need some preliminary lemmas before proving \cref{free iterated actions on nilmanifolds}. Firstly, we study the case where $(N/\Gamma)/\mathcal{G}$ is a nilmanifold.

\begin{lem}\label{free iterated actions nilmanifolds quotient nilmanifold}
	Let $N$ be a $c$-step nilpotent Lie group and let $\Gamma$ and $\Lambda$ be lattices of $N$ such that $\Gamma\leq \Lambda$. There exist subgroups $\Lambda_0,\Lambda_1,\dots,\Lambda_c$ such that $\Lambda_0=\Gamma$, $\Lambda_c=\Lambda$ and $\Lambda_i\trianglelefteq \Lambda_{i+1}$ for all $i$. Moreover, $A_i=\Lambda_{i+1}/\Lambda_{i}$ is abelian and $\rank(A_i)\leq b_i$ for all $i$.
\end{lem}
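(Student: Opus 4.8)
The plan is to build the chain explicitly as $\Lambda_i:=\Gamma\cdot Z_i(\Lambda)$, where $Z_i(\Lambda)$ denotes the $i$-th term of the upper central series of $\Lambda$ (so $Z_0(\Lambda)=1$, $Z_1(\Lambda)=\Zc\Lambda$, and $Z_c(\Lambda)=\Lambda$ since $\Lambda$, as a lattice in the $c$-step nilpotent group $N$, has nilpotency class at most $c$). Since $\Gamma\le\Lambda$ and each $Z_i(\Lambda)$ is characteristic, hence normal, in $\Lambda$, the subset $\Lambda_i=\Gamma Z_i(\Lambda)=Z_i(\Lambda)\Gamma$ is a subgroup. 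One reads off immediately that $\Lambda_0=\Gamma$, that $\Lambda_c=\Gamma\Lambda=\Lambda$, and that $\Lambda_i\le\Lambda_{i+1}$ because $Z_i(\Lambda)\le Z_{i+1}(\Lambda)$. (Recall also that $[\Lambda:\Gamma]<\infty$ since both are cocompact and discrete, so the quotients $A_i$ will automatically be finite.)

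Next I would verify that $\Lambda_i\trianglelefteq\Lambda_{i+1}$. As $\Lambda_{i+1}=\Gamma\,Z_{i+1}(\Lambda)$, it is enough to show that $\Gamma$ and $Z_{i+1}(\Lambda)$ each normalise $\Lambda_i$. The group $\Gamma$ normalises $\Gamma$ and normalises $Z_i(\Lambda)\trianglelefteq\Lambda$, hence normalises $\Lambda_i$. For $m\in Z_{i+1}(\Lambda)$ and $\gamma\in\Gamma$ we have $m\gamma m^{-1}=(m\gamma m^{-1}\gamma^{-1})\gamma$, and $m\gamma m^{-1}\gamma^{-1}\in[Z_{i+1}(\Lambda),\Lambda]\subseteq Z_i(\Lambda)$ by definition of the upper central series; therefore $m\gamma m^{-1}\in Z_i(\Lambda)\Gamma=\Lambda_i$. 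Combined with $mZ_i(\Lambda)m^{-1}=Z_i(\Lambda)$, this gives $m\Lambda_i m^{-1}\subseteq\Lambda_i$, and the same applied to $m^{-1}$ shows $Z_{i+1}(\Lambda)$ normalises $\Lambda_i$. Hence $\Lambda_{i+1}$ normalises $\Lambda_i$.

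It remains to control $A_i=\Lambda_{i+1}/\Lambda_i$. Since $Z_i(\Lambda)\subseteq Z_{i+1}(\Lambda)$ we have $\Lambda_{i+1}=\Lambda_i\,Z_{i+1}(\Lambda)$, so by the second isomorphism theorem $A_i\cong Z_{i+1}(\Lambda)/\bigl(Z_{i+1}(\Lambda)\cap\Lambda_i\bigr)$. Because $Z_i(\Lambda)\subseteq Z_{i+1}(\Lambda)\cap\Lambda_i$, the group $A_i$ is a quotient of $Z_{i+1}(\Lambda)/Z_i(\Lambda)$, which is free abelian; hence $A_i$ is abelian with $\rank A_i\le\rank\bigl(Z_{i+1}(\Lambda)/Z_i(\Lambda)\bigr)$. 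Finally I would invoke the standard description of the upper central series of a lattice in a simply connected nilpotent Lie group (e.g. \cite[Chapter 1]{dekimpe2006almost}): $Z_i(\Lambda)=Z_i(N)\cap\Lambda$ is a lattice of $Z_i(N)$, so $\rank\bigl(Z_{i+1}(\Lambda)/Z_i(\Lambda)\bigr)=\dim\bigl(Z_{i+1}(N)/Z_i(N)\bigr)$, a number independent of the lattice; applying this with $\Lambda=\Gamma$ identifies it with $b_i$, whence $\rank A_i\le b_i$.

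The only genuine design decision is the choice $\Lambda_i=\Gamma Z_i(\Lambda)$; after that, the verifications of normality and of the abelian quotient are elementary group theory. The step I would treat most carefully — and the only one using more than formal manipulations — is the equality $\rank\bigl(Z_{i+1}(\Lambda)/Z_i(\Lambda)\bigr)=b_i$, which relies on the Mal'cev-theoretic fact that the terms of the upper central series of $N$ are rational with respect to any lattice.
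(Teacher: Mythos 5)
Your proof is correct and is essentially the paper's argument written in closed form: the paper builds the same chain by induction on $c$, quotienting by $\Zc N$ at each step, and unwinding that recursion yields exactly $\Lambda_i=\Gamma\cdot Z_i(\Lambda)$. Your direct verifications (normality via $[Z_{i+1}(\Lambda),\Lambda]\subseteq Z_i(\Lambda)$, and the rank bound via $Z_i(\Lambda)=Z_i(N)\cap\Lambda$ being a lattice of $Z_i(N)$) are exactly what the paper's induction uses implicitly.
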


\begin{proof}
	We prove the claim by induction on $c$. If $c=1$ then $N$ is abelian and therefore $\Gamma=\Lambda_0\trianglelefteq\Lambda_1=\Lambda$.
	
	Assume that $N$ is $c$-step nilpotent and let $\pi:N\longrightarrow N/\Zc N$ be the quotient map to the $(c-1)$-step nilpotent Lie group $N/\Zc N$. Then $\pi(\Lambda)$ and $\pi(\Gamma)$ are lattices of $N/\Zc N$. Since $\Zc\Lambda=\Lambda\cap\Zc N$ and $\Zc\Gamma=\Gamma\cap\Zc N$, there are short exact sequences
	\[
	\begin{tikzcd}
		1 \ar{r}{}& \Zc\Lambda \ar{r}{}&\Lambda \ar{r}{\pi_{|\Lambda}}& \pi(\Lambda) \ar{r}{}&1
	\end{tikzcd}
	\]
	
	and 
	
	\[
	\begin{tikzcd}
		1 \ar{r}{}& \Zc\Gamma \ar{r}{}&\Gamma \ar{r}{\pi_{|\Gamma}}& \pi(\Gamma) \ar{r}{}&1.
	\end{tikzcd}
	\]
	
	By induction hypothesis, there exists $\Lambda'_0=\pi(\Gamma)\trianglelefteq \Lambda'_1\trianglelefteq \dots\trianglelefteq \Lambda'_{c-1}=\pi(\Lambda)$. Then we define $\Lambda_{i+1}=\pi_{|\Lambda}^{-1}(\Lambda'_i)$ for $0\leq i\leq c-1$, which satisfy that $\Lambda_{i}\trianglelefteq \Lambda_{i+1}$ for all $0\leq i\leq c-1$.
	
	Lastly, we take $\Lambda_0=\Gamma$. We have the commutative diagram where the rows are central extensions
	
	\[
	\begin{tikzcd}
		1 \ar{r}{}& \Zc\Lambda \ar{r}{}&\Lambda_1 \ar{r}{\pi_{|\Lambda}}& \pi(\Gamma) \ar{r}{}&1\\
		1 \ar{r}{}& \Zc\Gamma \ar{r}{}\ar[hook]{u}{}&\Gamma \ar{r}{\pi_{|\Gamma}}\ar[hook]{u}{}& \pi(\Gamma) \ar{r}{}\ar{u}{id}&1.
	\end{tikzcd}
	\]
	Consequently, $\Gamma\trianglelefteq \Lambda_1$.
\end{proof}

\begin{cor}
	A free iterated action $\mathcal{G}\acts N/\Gamma$ such that $(N/\Gamma)/\mathcal{G}$ is a nilmanifold is equivalent to a free iterated action $\mathcal{G}'\acts N/\Gamma$ where $\mathcal{G}'=\{A_1,\dots,A_c\}$ and $A_i$ are finite abelian groups such that $\rank(A_i)\leq b_i$.
\end{cor}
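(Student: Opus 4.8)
The plan is to translate the statement into the language of lattices in $N$ and then quote Lemma~\ref{free iterated actions nilmanifolds quotient nilmanifold}, which does all the real work.

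First I would set up the dictionary. Write $M=N/\Gamma$, let $M_n:=(N/\Gamma)/\mathcal G$ be the orbit space of the given iterated action, and let $p=p_n\circ\cdots\circ p_1:M\to M_n$ be its orbit map; then $p$ is a finite covering, being a composite of orbit maps of free finite group actions on manifolds, and by hypothesis $M_n$ is a nilmanifold. Since $p$ is a finite covering, $M$ and $M_n$ share the universal cover $N$; lifting $p$ to the universal covers gives a homeomorphism $N\to N$, which I would use to identify $\pi_1(M_n)$ with a group $\Lambda$ of homeomorphisms of $N$ containing $\Gamma=\pi_1(M)$ as a finite-index subgroup, and $p$ with the natural projection $N/\Gamma\to N/\Lambda$. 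Because $M_n$ is a nilmanifold, $\Lambda$ is a finitely generated torsion-free nilpotent group whose Mal'cev completion agrees with that of its finite-index subgroup $\Gamma$, namely $N$ itself; by Mal'cev rigidity (equivalently \cref{generalized Bieberbach theorems}(2)) I may then arrange the identification so that $\Lambda$ acts on $N$ by left translations, i.e.\ so that $\Gamma\le\Lambda$ are lattices of one and the same simply connected nilpotent Lie group $N$. Note $\Gamma$ and $\Lambda$ have the same associated Lie algebra, so the numbers $b_i$ attached to each coincide.

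Next I would apply Lemma~\ref{free iterated actions nilmanifolds quotient nilmanifold} to the inclusion $\Gamma\le\Lambda$, obtaining $\Gamma=\Lambda_0\trianglelefteq\Lambda_1\trianglelefteq\cdots\trianglelefteq\Lambda_c=\Lambda$ with each $A_i:=\Lambda_{i+1}/\Lambda_i$ abelian and $\rank(A_i)\le b_i$; each $A_i$ is moreover finite, since $\Lambda_i$ and $\Lambda_{i+1}$ are both lattices of $N$ (they are discrete, as subgroups of $\Lambda$, and cocompact, as they contain $\Gamma$), hence commensurable. Each $\Lambda_i$ is torsion-free and acts freely and properly discontinuously on $N$, so $N/\Lambda_i$ is a nilmanifold and the quotient map $q_{i+1}:N/\Lambda_i\to N/\Lambda_{i+1}$ is the orbit map of the free action of $A_i=\Lambda_{i+1}/\Lambda_i$ on $N/\Lambda_i$ by deck transformations. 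Assembling the tower $N/\Gamma=N/\Lambda_0\to N/\Lambda_1\to\cdots\to N/\Lambda_c=N/\Lambda$ with these actions yields a free iterated action $\mathcal G'=\{A_0,\dots,A_{c-1}\}\acts N/\Gamma$ of finite abelian groups with the required rank bounds, which after the evident relabelling is of the desired form.

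Finally I would check equivalence: the composite orbit map of $\mathcal G'$ is $q_c\circ\cdots\circ q_1:N/\Gamma\to N/\Lambda$, which is exactly the natural projection, hence coincides with $p$ under the identification from the first paragraph, so taking $f$ and $\overline f$ to be identity maps gives $\mathcal G\acts M\sim\mathcal G'\acts M$. The only genuine obstacle is the first paragraph — realizing $M_n$ as a quotient $N/\Lambda$ compatibly with the covering $p$, so that $\Gamma$ and $\Lambda$ become lattices of a single $N$ with $\Gamma\le\Lambda$ — but this is routine given that $p$ is a finite covering and Mal'cev rigidity; everything after it is bookkeeping plus a direct appeal to Lemma~\ref{free iterated actions nilmanifolds quotient nilmanifold}.
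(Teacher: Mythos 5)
Your proposal is correct and follows exactly the route the paper intends: the paper states this corollary as an immediate consequence of \cref{free iterated actions nilmanifolds quotient nilmanifold}, with the translation between the covering $(N/\Gamma)\to(N/\Gamma)/\mathcal{G}$ and an inclusion of lattices $\Gamma\le\Lambda$ of $N$ left implicit. Your careful justification of that identification (via the shared universal cover and Mal'cev rigidity) and the bookkeeping back to an equivalent iterated action are exactly what is needed.
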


For the general case we need \cref{AC-groups charactherization}, \cref{infranilmanifolds malcev completion trivial} to prove the following two lemmas.




\begin{lem}\label{itertaed actions on nilmanifolds and Fitt}
	Let $\mathcal{G}\acts N/\Gamma$ be a free iterated action. Then $\pi_1((N/\Gamma)/\mathcal{G})=E$ is an AC-group and $\Gamma\leq \Fitt(E)$. 
\end{lem}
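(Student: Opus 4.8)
The plan is to analyze the tower of quotients $N/\Gamma = M_0 \to M_1 \to \cdots \to M_k$ defining the iterated action $\mathcal{G} \acts N/\Gamma$ and to track the fundamental groups $\Gamma = \Gamma_0 \trianglelefteq \Gamma_1 \trianglelefteq \cdots \trianglelefteq \Gamma_k = E$. Each $\Gamma_{j}$ sits as a finite-index subgroup of $\Gamma_{j+1}$ with $\Gamma_{j+1}/\Gamma_j = G_{j+1}$ finite, and all of them contain $\Gamma$ as a finite-index subgroup. First I would observe that since $\Gamma$ is a finitely generated torsion-free nilpotent group of finite index in $E$, the group $E$ is virtually nilpotent. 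Moreover, since each $M_j$ is an aspherical manifold obtained from the aspherical manifold $N/\Gamma$ by a sequence of free finite group quotients, each $\Gamma_j$ is torsion-free (a finite-order element would give a non-free action, or one argues via the covering $\tilde{M}_j = \tilde{M}$). So $E$ is a torsion-free virtually polycyclic group containing a nilpotent subgroup of finite index, and by the Corollary to \cref{AC-groups charactherization}, $E$ is an AC-group. This gives the first half of the statement.

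For the inclusion $\Gamma \leq \Fitt(E)$, the key point is that $\Gamma$ is a nilpotent normal subgroup of $E$ — but a priori $\Gamma$ need not be normal in $E$, only in $\Gamma_1$. So the plan is to argue step by step up the tower. At each stage I would consider the extension $1 \to \Gamma_j \to \Gamma_{j+1} \to G_{j+1} \to 1$. Here $\Fitt(\Gamma_{j+1})$ is the unique maximal normal nilpotent subgroup; I want to show inductively that $\Gamma \leq \Fitt(\Gamma_{j+1})$, knowing $\Gamma \leq \Fitt(\Gamma_j)$. Since $\Fitt(\Gamma_j)$ is characteristic in $\Gamma_j$ and $\Gamma_j \trianglelefteq \Gamma_{j+1}$, the subgroup $\Fitt(\Gamma_j)$ is normal in $\Gamma_{j+1}$, hence $\Fitt(\Gamma_j) \leq \Fitt(\Gamma_{j+1})$ (a normal nilpotent subgroup lies in the Fitting subgroup when the ambient group is such that this makes sense — here $\Gamma_{j+1}$ is virtually polycyclic, so its Fitting subgroup is well-defined and contains every normal nilpotent subgroup). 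Chaining these inclusions from $j=0$ up to $j = k-1$ and using $\Gamma = \Fitt(\Gamma_0) \leq \Fitt(\Gamma_1) \leq \cdots \leq \Fitt(E)$ gives $\Gamma \leq \Fitt(E)$, provided $\Fitt(\Gamma_0) = \Gamma$, which holds because $\Gamma$ is nilpotent and equal to its own Fitting subgroup.

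An alternative, perhaps cleaner route uses \cref{AC-groups charactherization}(2) directly: for an AC-group, $\Fitt$ is torsion-free, maximal nilpotent, and of finite index. So $\Fitt(E)$ is \emph{the} maximal nilpotent subgroup of $E$ that is normal and of finite index. Since $\Gamma$ is nilpotent and of finite index in $E$, the subgroup generated by all $E$-conjugates of $\Gamma$ — or rather, the argument should show $\Gamma$ is contained in a normal nilpotent subgroup of finite index. One can also invoke \cref{infranilmanifolds malcev completion trivial} or the Mal'cev-completion machinery: $\Gamma$ determines a rational nilpotent Lie algebra and the maximal nilpotent subgroup of $E$ is forced by commensurability. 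Either way, I would finish by noting that since $\Gamma \leq \Fitt(E)$ and both are finite-index in $E$, they are commensurable lattices in the same nilpotent Lie group $N$ (the Mal'cev hull), which is consistent with and sharpens the statement. The main obstacle I anticipate is the non-normality of $\Gamma$ in $E$ — handling this carefully via the tower, rather than in one shot, and making sure at each step that $\Fitt$ behaves well under the finite extensions and that the inductively constructed subgroups remain torsion-free, is where the care is needed; but the generalized Bieberbach theorems and the characterization of AC-groups make this routine.
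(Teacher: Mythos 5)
Your proof is correct, and for the key inclusion $\Gamma\leq\Fitt(E)$ it takes a genuinely different and more elementary route than the paper. The paper works in one shot: it projects $\Gamma$ to $F=E/\Fitt(E)$, passes to the rational Mal'cev completion (using that $\Gamma$, $\Fitt(E)$ and $\Gamma\cap\Fitt(E)$ are commensurable, so they share the same completion $\Gamma_\Q$), and plays off \cref{infranilmanifolds malcev completion trivial} (the induced map $G\to\Out(\Gamma_\Q)$ is trivial because $\Gamma$ is nilpotent) against \cref{infranilmanifolds malcev completion injective} via \cref{AC-groups charactherization} (the same map is injective because $\Fitt(E)$ is maximal nilpotent in $E$), forcing $p(\Gamma)$ to be trivial. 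You instead exploit the subnormal chain $\Gamma=\Gamma_0\trianglelefteq\Gamma_1\trianglelefteq\cdots\trianglelefteq\Gamma_k=E$ coming from the tower of regular coverings and induct: $\Fitt(\Gamma_j)$ is characteristic in $\Gamma_j$, hence normal in $\Gamma_{j+1}$, hence contained in $\Fitt(\Gamma_{j+1})$ since in a virtually polycyclic group the Fitting subgroup contains every normal nilpotent subgroup; chaining from $\Fitt(\Gamma_0)=\Gamma$ gives the claim. This is precisely the standard fact that a subnormal nilpotent subgroup of a group with max lies in the Fitting subgroup, and it avoids the Mal'cev machinery entirely. What the paper's argument buys in exchange is that it makes no use of the tower structure at all: it shows directly that \emph{any} finitely generated torsion-free nilpotent finite-index subgroup of an AC-group (subnormal or not) lies in the Fitting subgroup, which is the form in which the statement is reused later (e.g.\ in the proof of \cref{free iterated actions on nilmanifolds}, where $\Gamma\leq\Lambda=\Fitt(\pi_1(M))$ is invoked by maximality). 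Your first half (torsion-freeness of $E$ from asphericity plus the Corollary to \cref{AC-groups charactherization}) coincides with the paper's.
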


\begin{proof}
	Since $(N/\Gamma)/\mathcal{G}$ is a closed aspherical manifold, then $E$ is torsion-free and contains $\Gamma$ as a finite index subgroup. Using \cref{AC-groups charactherization}, $E$ is an AC-group. It only remains to prove that $\Gamma\leq \Fitt(E)$.
	
	Consider the exact sequence
	\[
	\begin{tikzcd}
		1\ar{r}{} & \Fitt(E)\ar{r}{} & E\ar{r}{p} & F\ar{r}{} & 1
	\end{tikzcd}
	\]
	If $G=p(\Gamma)$ and $\Lambda=p^{-1}(G)$ we obtain the commutative diagram
	\[
	\begin{tikzcd}
		1\ar{r}{} & \Fitt(E)\cap\Gamma\ar{r}{}\ar{d}{} & \Gamma\ar{r}{p}\ar{d}{i} & G\ar{r}{}\ar{d}{} & 1\\
		1\ar{r}{} & \Fitt(E)\ar{r}{} & \Lambda\ar{r}{p} & G\ar{r}{} & 1
	\end{tikzcd}
	\]
	where the vertical arrows are inclusions. Let $\sigma:G\longrightarrow \Gamma$ be a set-theoretic section of $p$ and $\psi:G\longrightarrow \Out(\Fitt(E)\cap \Gamma)$ the induced group morphism such that $\psi(g)=[c_{\sigma(g)|\Fitt(E)\cap \Gamma}]$. The map $\overline{\sigma}=i\circ \sigma$ is a section for the second short exact sequence and it induces a group morphism $\bar\psi:G\longrightarrow \Out(\Fitt(E))$ such that $\tilde\psi(g)=[c_{\sigma(g)|\Fitt(E)}]$.
	
	Since $\Gamma$ and $\Fitt(E)$ have finite index inside $E$, then $\Fitt(E)\cap \Gamma$ has finite index inside $\Fitt(E)$ and $\Gamma$ and therefore $\Gamma_\Q=\Fitt(E)_\Q=(\Fitt(E)\cap \Gamma)_\Q$. This implies that the maps $\psi':G\longrightarrow \Out(\Gamma_\Q)$ and $\bar\psi':G\longrightarrow \Out(\Gamma_\Q)$ are the same. By \cref{infranilmanifolds malcev completion trivial} the morphism $\psi'$ is trivial since $\Gamma$ is nilpotent. On the other hand $\overline{\psi}'$ is injective by \cref{infranilmanifolds malcev completion trivial} and part 2. of \cref{AC-groups charactherization}. Therefore, the only option is that $G$ is trivial and $\Gamma\leq\Fitt(E)$.
\end{proof}

\begin{lem}\label{out rational malcev completion}
	There exists a constant $C$ such that if $G$ is a finite subgroup of $\Out(\Gamma_\Q)$ then $|G|\leq C$.
\end{lem}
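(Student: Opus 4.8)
The plan is to derive the bound from Minkowski's lemma, after linearising $\Aut(\Gamma_\Q)$ and checking that every finite subgroup of $\Out(\Gamma_\Q)$ can be lifted to a finite subgroup of $\Aut(\Gamma_\Q)$.

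First I would invoke \cref{automorphisms malcev completion}, which gives $\Aut(\Gamma_\Q)\cong\Aut(\mathcal{L}(\Gamma)_\Q)$, where $\mathcal{L}(\Gamma)_\Q$ is a rational Lie algebra of dimension $m$ equal to the Hirsch length of $\Gamma$. Hence $\Aut(\Gamma_\Q)$ is isomorphic to a subgroup of $\Gl(m,\Q)$. Any finite subgroup $F\leq\Gl(m,\Q)$ preserves a lattice, namely the $\Z$-span of $\bigcup_{g\in F}g\Z^{m}$, which is a lattice because it is finitely generated and spans $\Q^{m}$; so $F$ is conjugate inside $\Gl(m,\Q)$ to a finite subgroup of $\Gl(m,\Z)$. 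Since $\Gl(m,\Z)$ is Minkowski by Minkowski's lemma, there is a constant $C_{0}=C_{0}(m)$ with $|F|\leq C_{0}$ for every finite subgroup $F$ of $\Aut(\Gamma_\Q)$.

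Next I would show that every finite subgroup $G\leq\Out(\Gamma_\Q)$ admits a lift to a finite subgroup of $\Aut(\Gamma_\Q)$ of the same order; this yields $|G|\leq C_{0}$ and finishes the proof with $C=C_{0}$. Let $\pi\colon\Aut(\Gamma_\Q)\to\Out(\Gamma_\Q)$ be the projection, and put $H=\pi^{-1}(G)$, so that $1\to\Inn(\Gamma_\Q)\to H\to G\to 1$ is exact. The crucial point is that $\Inn(\Gamma_\Q)=\Gamma_\Q/\Zc\Gamma_\Q$ is a torsion-free radicable (uniquely divisible) nilpotent group of finite Hirsch length; the successive quotients of its lower central series are finite-dimensional $\Q$-vector spaces, they are characteristic in $\Inn(\Gamma_\Q)$ (hence invariant under conjugation by $H$), and the $\Inn(\Gamma_\Q)$-action on each of them is trivial, so each is a $\Q[G]$-module. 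Since the cohomology of the finite group $G$ with coefficients in any $\Q[G]$-module vanishes in positive degrees, a dévissage along this central series — exactly as in the lifting arguments behind the Bieberbach-type theorems of \cref{generalized Bieberbach theorems} — shows that the extension $1\to\Inn(\Gamma_\Q)\to H\to G\to 1$ splits; alternatively, this splitting is a standard property of extensions of a finite group by a radicable nilpotent group. Any splitting $s\colon G\to H$ then provides the desired finite lift $s(G)\leq\Aut(\Gamma_\Q)$.

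I expect the splitting step to be the main obstacle: one has to verify carefully that the quotients appearing in the central series of $\Inn(\Gamma_\Q)$ are genuine finite-dimensional rational vector spaces carrying a well-defined $G$-action, and then organise the inductive reduction correctly. Everything else is a routine combination of \cref{automorphisms malcev completion} with Minkowski's lemma. A shorter alternative route would be to observe that $\Inn(\Gamma_\Q)$ is the group of $\Q$-points of a closed normal unipotent subgroup of the $\Q$-algebraic group $\Aut(\mathcal{L}(\Gamma)_\Q)$, so that, by the vanishing of Galois cohomology of unipotent groups in characteristic zero, $\Out(\Gamma_\Q)$ is itself the group of $\Q$-points of a linear algebraic group over $\Q$; embedding that group into some $\Gl(k,\Q)$ and applying Minkowski's lemma gives the bound directly.
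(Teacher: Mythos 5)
Your proposal is correct and follows essentially the same route as the paper: lift the finite subgroup of $\Out(\Gamma_\Q)$ to a finite subgroup of $\Aut(\Gamma_\Q)$ using the vanishing of the relevant cohomology with coefficients in divisible $\Q$-vector spaces (the paper states this tersely as $H^2_\psi(G,\Q^n)=0$), then identify $\Aut(\Gamma_\Q)$ with $\Aut(\mathcal{L}(\Gamma)_\Q)\leq\Gl(m,\Q)$ via \cref{automorphisms malcev completion}, conjugate into $\Gl(m,\Z)$, and apply Minkowski's lemma. Your dévissage along the central series of $\Inn(\Gamma_\Q)$ just spells out the splitting step that the paper leaves implicit.
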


\begin{proof}
	Let $\psi:G\longrightarrow \Out(\Gamma_\Q)$ denote the inclusion of the finite group. We have $H^2_\psi(G,\Q^n)=0$, since $G$ is finite and $\Q^n$ is divisible and therefore there is an injective lift $\tilde{\psi}:G\longrightarrow \Aut(\Gamma_\Q)$ of $\psi$. By \cref{automorphisms malcev completion}, $\Aut(\Gamma_\Q)=\Aut(\mathcal{L}(\Gamma_\Q))$, which is a subgroup of $\Gl(m,\Q)$ for some $m$. In consequence, $G$ is conjugated to a finite subgroup of $\Gl(m,\Z)$ and therefore the bound is a consequence of Minkowski's lemma. 
\end{proof}

\begin{proof}[Proof of \cref{free iterated actions on nilmanifolds}]
	Assume that we have a free iterated action of $\mathcal{G}$ on a nilmanifold $N/\Gamma$. Then $(N/\Gamma)/\mathcal{G}=M$ is a closed almost-flat manifold and $\pi_1(M)$ is an almost-Bieberbach group. In consequence, $\pi_1(M)$ contains a maximal normal nilpotent group $\Lambda$, which has $\Gamma$ as a subgroup by maximality by \cref{itertaed actions on nilmanifolds and Fitt}. Since $[\pi_1(M):\Gamma]<\infty$, $[\Lambda:\Gamma]<\infty$ and therefore $\Lambda$ is a lattice of $N$. By \cref{free iterated actions nilmanifolds quotient nilmanifold}, there exists a subnormal series $\Gamma=\Lambda_0\trianglelefteq\Lambda_1\trianglelefteq\cdots\trianglelefteq\Lambda_c=\Lambda\trianglelefteq \pi_1(M)$. We have a free iterated action of $\{\Lambda_i/\Lambda_{i-1},\pi_1(M)/\Lambda\}_{i=1,\dots,c}$ on $N/\Gamma$ equivalent to $\mathcal{G}\acts N/\Gamma$. By \cref{free iterated actions nilmanifolds quotient nilmanifold}, $\rank(\Lambda_i/\Lambda_{i-1})\leq b_i$. Moreover, $\pi_1(M)/\Lambda\leq \Out(\Lambda_\Q)$ and $\Lambda_\Q\cong \Gamma_\Q$. Therefore, by \cref{infranilmanifolds malcev completion trivial} $|\pi_1(M)/\Lambda|\leq C$, where $C$ is a constant depending on $\Gamma_\Q$.
\end{proof}  

\begin{rem}\label{lenght remark}
	The bound of \cref{free iterated actions on nilmanifolds lenght} is sharp. For example, the free iterated action on $T^3$ from \cite{daura2025IGG1} shows that $2\geq l(T^3)$ and therefore $l(T^3)=2$. Consequently, $l(T^n)=2$ for all $n\geq 3$. On the other hand, in \cite{daura2025IGG1} we show that all free iterated actions on $S^1$ and $T^2$ are simplifiable, which implies that $l(T^n)=1$ for $n=1,2$. Thus, \cref{free iterated actions on nilmanifolds lenght} is not an equality in general. 
\end{rem}

On the other hand, a bound for $l(M)$ when $M$ is a solvmanifold does not always exist.

\begin{thm}\label{solvmanifold inifite lenght}
	There exists a $3$-dimensional solvmanifold $M$ such that $l(M)=\infty$.
\end{thm}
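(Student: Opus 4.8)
The plan is to exhibit an explicit $3$-dimensional solvmanifold $M$ whose fundamental group is a semidirect product $\Z^2\rtimes_A\Z$ for a well-chosen hyperbolic matrix $A\in\Gl(2,\Z)$, and then build, for each prime $p$, a free iterated action of length roughly $\log_p$ of the index of an appropriate sublattice — more precisely a tower in which the $\Z^2$ part is successively enlarged. Concretely, take $A=\begin{pmatrix}2&1\\1&1\end{pmatrix}$ (or any Anosov matrix), so $\pi_1(M)=\Gamma_A:=\Z^2\rtimes_A\Z$ is the fundamental group of a Sol-manifold. The key algebraic point is that $A$ preserves many finite-index sublattices of $\Z^2$: if $\Lambda\le\Z^2$ is $A$-invariant of finite index, then $\Lambda\rtimes_A\Z\le\Gamma_A$ is a finite-index subgroup which is again the fundamental group of a Sol-manifold, and the covering $M_\Lambda\to M$ is a free $\Z^2/\Lambda$-cover. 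So the task reduces to producing, for infinitely many $n$, a chain of $A$-invariant sublattices $\Z^2=\Lambda_0\supsetneq\Lambda_1\supsetneq\cdots\supsetneq\Lambda_n$ with each quotient $\Lambda_{i-1}/\Lambda_i$ a nontrivial finite abelian group, and then verifying that the resulting free iterated action $\{\Lambda_0/\Lambda_1,\dots,\Lambda_{n-1}/\Lambda_n\}\acts M$ has length $n$, i.e.\ cannot be shortened.

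First I would set up the building blocks: fix a prime $p$ that splits (or is inert, one just needs $A\bmod p$ to behave well) in $\Z[\lambda]$ where $\lambda$ is an eigenvalue of $A$, and consider the chain $\Lambda_i=p^i\Z^2$ for $i=0,\dots,n$. Each $\Lambda_i$ is obviously $A$-invariant, $\Lambda_{i-1}/\Lambda_i\cong(\Z/p)^2$, and $\Lambda_i\rtimes_A\Z$ has finite index $p^{2i}$ in $\Gamma_A$; the corresponding solvmanifold $M_i$ is a $p^{2i}$-fold cover of $M$. This gives for every $n$ a free iterated action of the collection $\mathcal{G}_n=\{(\Z/p)^2,\dots,(\Z/p)^2\}$ ($n$ copies) on $M$, with total quotient $M_n$. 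Then I would compute $l(\mathcal{G}_n\acts M)$ from below. The covering $p\colon M\to M_n$ is the $\Z^2/p^n\Z^2\cong(\Z/p^n)^2$-cover, so the equivalence class $[\mathcal{G}_n\acts M]$ is the class of this covering; by Definition~\ref{lenght def}, $l(\mathcal{G}_n\acts M)$ is the minimal number of steps in \emph{any} iterated realization of this covering. Since $\pi_1(M_n)=p^n\Z^2\rtimes_A\Z$ is again of Sol type, the intermediate covers in any iterated realization correspond to a chain of intermediate subgroups $p^n\Z^2\rtimes_A\Z=H_0\lhd H_1\lhd\cdots\lhd H_k=\Gamma_A$ each normal in the next, with $H_{j-1}/H_j$ finite. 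Each $H_j$ is a finite-index subgroup of $\Gamma_A$ containing the characteristic subgroup $\Z^2$ (the Fitting subgroup / unique maximal normal abelian-by-nothing subgroup — here $\Z^2=\Fitt(\Gamma_A)$ since $A$ is hyperbolic), so $H_j=\Lambda^{(j)}\rtimes_A\Z$ with $p^n\Z^2\le\Lambda^{(j)}\le\Z^2$ an $A$-invariant sublattice; consequently $\prod_j|H_{j-1}/H_j|=[\Z^2:p^n\Z^2]=p^{2n}$, forcing $k\ge$ the number of prime factors, but I want the sharper bound $k\ge n$. That sharper bound needs one more ingredient.

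The main obstacle — and the step I expect to require real care — is showing $k\ge n$ rather than just $k\ge 2n/\log_p(\text{something})$, i.e.\ ruling out realizations that "jump" by a group of order $p^2$ or larger in a single step. The point is that $\Z^2/p^n\Z^2$ as a \emph{chain of $A$-invariant submodules} can be made to have length exactly $n$ by choosing $A$ so that $A\bmod p$ is a \emph{nonsemisimple} (single Jordan block) or an irreducible operator on $(\Z/p)^2$: if $A\bmod p$ acts irreducibly on $\mathbb{F}_p^2$ then the only $A$-invariant submodules of $\Z^2/p^n\Z^2$ form a single chain $0\subset p^{n-1}\Z^2/p^n\Z^2\subset\cdots\subset\Z^2/p^n\Z^2$ of length $n$ with each quotient $\cong\mathbb{F}_p^2$ simple as an $A$-module — so any refinement into normal steps must pass through all $n$ of them, giving $k\ge n$, while the displayed realization achieves $k=n$; hence $l(\mathcal{G}_n\acts M)=n$ and, since $n$ is arbitrary, $l(M)=\infty$. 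One must check that being an $A$-invariant subgroup is the \emph{only} constraint on the intermediate $H_j$ (this uses that $\Z^2$ is characteristic in $\Gamma_A$, so every iterated-covering step automatically respects it, and that an extension of $\Z$ by $\Z$ with hyperbolic monodromy has no "extra" finite quotients splitting off the $\Z$ factor), and that equivalence of iterated actions in Definition~\ref{lenght def} does not allow replacing the base $M_n$ by a non-homeomorphic manifold — which is fine since covering isomorphism type is exactly what is fixed. I would close by remarking that the same construction, viewed inside $\operatorname{Sol}\rtimes(\text{something})$, gives part (2) of Theorem~\ref{main theorem6 intro}, but the present statement only needs the Sol-manifold $M$ just constructed.
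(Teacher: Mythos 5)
Your overall strategy (a tower of sublattices of the $\Z^2$-part of a Sol-lattice $\Gamma_A=\Z^2\rtimes_A\Z$, all isomorphic to $\Gamma_A$, plus a counting/rigidity argument to show the tower cannot be shortened) is the same as the paper's. But there is a genuine gap at the heart of your argument: the condition you impose on $A$ to get the lower bound is exactly the condition that prevents your tower from being an iterated \emph{group action} in the first place. For the covering $M_{\Lambda_i}\to M_{\Lambda_{i-1}}$ to be the orbit map of a free action of $\Lambda_{i-1}/\Lambda_i$, you need $\Lambda_i\rtimes_A\Z\trianglelefteq\Lambda_{i-1}\rtimes_A\Z$. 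Conjugating $(0,1)$ by $(w,0)$ gives $\bigl((I-A)w,1\bigr)$, so normality forces $(I-A)\Lambda_{i-1}\subseteq\Lambda_i$; with $\Lambda_i=p^i\Z^2$ this means $A\equiv I\pmod p$. If $A\bmod p$ is irreducible on $\mathbb{F}_p^2$ (or a nontrivial Jordan block), then $A\not\equiv I\pmod p$ and $p\Z^2\rtimes_A\Z$ is not even normal in $\Gamma_A$: your collection $\mathcal{G}_n$ does not act. Conversely, if you repair this by taking $A\equiv I\pmod p$, then $A$ acts trivially on $\Z^2/p\Z^2$, every subgroup of $(\Z/p)^2$ is $A$-invariant, and your "unique composition chain, hence $k\ge n$" argument evaporates. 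There is also a second, independent gap in the lower bound: even if the $A$-invariant sublattices between $p^n\Z^2$ and $\Z^2$ formed a single chain, a subnormal series of the groups $H_j=\Lambda^{(j)}\rtimes_A\Z$ is not obliged to refine to that chain — a single step with quotient of order $p^{2n}$ would be perfectly consistent with $A$-invariance. What forbids large jumps is normality, not simplicity of the composition factors of the module $\Z^2/p^n\Z^2$.

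The paper's proof supplies precisely the missing ingredient. It takes $p=2$ and $A=I+B$ with $B\in M_{2\times2}(2\Z)$ but $B\notin M_{2\times2}(4\Z)$, so that the tower $\Gamma_k=2^k\Z^2\rtimes_A\Z$ is subnormal one step at a time, and then computes the normalizer: $N_\Gamma(\Gamma_k)=\Gamma_{k-1}$ (and likewise for the intermediate index-$2$ lattices $\Gamma_k^{(i,j)}$). This shows that in \emph{any} subnormal series from $\Gamma_k$ up to $\Gamma$, each step has index at most $4$, and then $[\Gamma:\Gamma_k]=4^k$ forces length at least $k$. To fix your write-up you would need to (i) replace the irreducibility hypothesis by $A\equiv I\pmod p$, $A\not\equiv I\pmod{p^2}$ (in a suitably nondegenerate sense), and (ii) replace the module-theoretic chain argument by the normalizer computation $N_{\Gamma_A}(p^k\Z^2\rtimes_A\Z)=p^{k-1}\Z^2\rtimes_A\Z$ together with the index count.
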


\begin{proof}
	Consider the 3-dimensional solvable Lie group $R=\operatorname{Sol}^3$. Explicitly, $R\cong \R^2\rtimes_\psi\R$, where 
	\[ \psi(t)= \left(\begin{matrix}
		e^{-t} & 0\\
		0 & e^t
	\end{matrix}\right).\] 
	
	Any lattice of $R$ is isomorphic to a semi-direct $\Z^2\rtimes_\phi \Z$, where $\phi:\Z\longrightarrow \Sl(2,\Z)$ satisfies $\operatorname{tr}(\phi(1))>2$ (see \cite[\S 2]{lee2015infra}). Two of these lattices $\Gamma$ and $\Gamma'$ are isomorphic if and only if $\phi'(1)$ is conjugate to $\phi(1)$ or $\phi(1)^t$.
	
	We take the matrix 
	\[ A= \left(\begin{matrix}
		5 & 2\\
		2 & 1
	\end{matrix}\right)\]
	and the lattice $\Gamma=\Z^2\rtimes_\phi \Z$ of $R$ where $\phi(1)=A$. We are going to see that the solvmanifold $M=R/\Gamma$ satisfies $l(M)=\infty$. We start by studying the lattice $\Gamma$ and some of its sublattices.
	
	For an integer $k\geq 0$ we define the group $\Gamma_k=2^k(\Z)^2\rtimes_\phi\Z$. We have a series $\Gamma=\Gamma_0\geq \Gamma_1\geq \Gamma_2 \geq \dots$. Firstly, we note that the group morphism $f_k:\Gamma\longrightarrow \Gamma_k$ such that $f_k(v,t)=(2^kv,t)$ is an isomorphism for all $k\geq 0$, which implies that $R/\Gamma$ and $R/\Gamma_k$ are diffeomorphic for all $k\geq 0$.
	
	We claim now that the normalizer $N_\Gamma(\Gamma_k)=\Gamma_{k-1}$. Firstly, a computation shows that:
	\begin{itemize}
		\item[1.] If $(v,t)\in\Gamma$ then $(v,t)^{-1}=(-A^{-t}v,-t)$.
		\item[2.] If $(v,t),(w,s)\in\Gamma$ then $(v,t)(w,s)(v,t)^{-1}=((Id-A^s)v+A^tw,s)$.
	\end{itemize}
	
	Therefore, the normalizer takes the form 
	$$N_\Gamma(\Gamma_k)=\{(v,t)\in\Gamma:(Id-A^s)v\in 2^{k}\Z^2 \text{ for all $s$}\}.$$
	
	The matrix $A$ is of the form $A=Id+B$ with 
	\[ B= \left(\begin{matrix}
		4 & 2\\
		2 & 0
	\end{matrix}\right)\in M_{2\times 2}(2\Z)\]
	which implies that $Id-A^s=-\sum_{i=1}^{s}\binom{s}{i}B^s\in M_{2\times 2}(2\Z)$ for all $s\geq1$ (and we have a similar form for $A^{-s}$). Let $(v,t)\in N_\Gamma(\Gamma_k)$ with $v=(v_1,v_2)\in \Z^2$. Since $(Id-A^s)v\in 2^{k}\Z^2$ for all $s$, in particular $-Bv=(Id-A)v\in 2^{k}\Z^2$. We obtain that $2v_1,2v_2\in 2^k\Z$ and therefore $v_1,v_2\in 2^{k-1}\Z$. Moreover, if $v\in 2^{k-1}\Z^2$ then $(Id-A^s)v\in 2^{k}\Z^2$ for all $s$. Thus, we have seen that  
	$$N_\Gamma(\Gamma_k)=\{(v,t)\in\Gamma:v\in 2^{k-1}\Z^2\}= \Gamma_{k-1}.$$
	
	In consequence, $\Gamma_k\trianglelefteq\Gamma_{k-1}$ and $\Gamma_k\ntrianglelefteq\Gamma_{k-i}$ for $i>1$. In addition, $ \Gamma_{k-1}/\Gamma_{k}\cong \Z/2\oplus\Z/2$. If $\pi_k:\Gamma_{k-1}\longrightarrow \Z/2\oplus\Z/2$ denotes the quotient map, then we can define new lattices
	$$\Gamma_k^{(1,0)}=\pi_k^{-1}(\langle(1,0)\rangle)=(2^k\Z\times 2^{k-1}\Z)\rtimes_\phi\Z, $$
	$$\Gamma_k^{(0,1)}=\pi_k^{-1}(\langle(0,1)\rangle)=(2^{k-1}\Z\times 2^{k}\Z)\rtimes_\phi\Z, $$
	$$\Gamma_k^{(1,1)}=\pi_k^{-1}(\langle(1,1)\rangle)= \{((v_1,v_2),t)\in\Gamma_{k-1}:v_1+v_2\in 2^{k}\Z\}.$$
	
	Analogous computations show that $N_\Gamma(\Gamma_k^{(i,j)})=\Gamma_{k-1}^{(i,j)}$ and $\Gamma_k^{(i,j)}/\Gamma_{k-1}^{(i,j)}\cong \Z/2\oplus\Z/2$ for all $(i,j)\in \Z/2\oplus\Z/2$, where we set $\Gamma_k^{(0,0)}=\Gamma_{k-1}$.
	
	We consider the regular $\Z/2\oplus \Z/2$ self-covering $p:R/\Gamma_1\cong M\longrightarrow R/\Gamma\cong M$. We can obtain a tower of regular self-coverings $p^k: R/\Gamma_k\cong M\longrightarrow R/\Gamma\cong M$, which has associated a free iterated action of $\mathcal{G}_k=\{\Z/2\oplus\Z/2,\dots,\Z/2\oplus\Z/2\}$ with $l(\mathcal{G}_k)=k$ on $M$. 
	
	Let $\mathcal{G}'=\{G_1',\dots,G_m'\}\acts M$ be a free iterated action equivalent to $\mathcal{G}_k\acts M$. Then we have a subnormal series $\Gamma_k=\Lambda_0\trianglelefteq\Lambda_1\trianglelefteq\cdots\trianglelefteq \Lambda_{m}=\Gamma$ such that $\Lambda_{i}/\Lambda_{i-1}\cong G_i'$. We have $\Lambda_1\leq N_\Gamma(\Gamma_k)$ and therefore $\Lambda_1= \Gamma_k^{(i,j)}$ for some $(i,j)\in\Z/2\oplus\Z/2$. In consequence, $|G_1'|\leq 4$. We can repeat the same process with $\Lambda_1= \Gamma_k^{(i,j)}$, since $N_\Gamma(\Gamma_k^{(i,j)})=\Gamma_{k-1}^{(i,j)}$. Repeating this process we find that all the subgroups of the subnormal series are of the form  $\Gamma_a^{(i,j)}$. This implies that $|G_i'|\leq 4$ for all $1\leq i\leq m$. 
	
	Finally, if $\mathcal{G}'\acts M$ satisfies that $m=l(\mathcal{G}_k\acts M)$ then $[\Gamma:\Gamma_k]=4^k=\prod_{i=1}^m |G_i|\leq 4^{m}$, which implies that $l(\mathcal{G}_k\acts M)=k$. Since $k$ can be chosen to be arbitrarily large, we obtain that $l(M)=\infty$. 
\end{proof}

\begin{rem}
	The solvmanifold $M$ of \cref{solvmanifold inifite lenght} satisfies $\D(M)=0$. Indeed, since $M$ is aspherical and $\Gamma$ is polycyclic, we have $\D(M)\leq\rank\Zc\Gamma$. Since $\Gamma$ is isomorphic to the semi-direct product $\Z^2\rtimes_\phi\Z$, we can use \cite{daura2025IGG1} to conclude that $\Zc\Gamma$ is trivial. Consequently,  $\D(M)\leq\rank\Zc\Gamma=0$ and therefore $\D(M)=0$. 
\end{rem}

We study now the iterated length of locally symmetric spaces.

\begin{lem}\label{lenght locally symmetric spaces}
	Let $K\setminus G/\Gamma$ be a locally symmetric space where $G$ is a connected semisimple Lie group without compact factors, $K$ is a maximal compact subgroup and $\Gamma$ is a lattice. Then $l(K\setminus G/\Gamma)$ is bounded by a constant $C$ depending on $\Gamma$. 
\end{lem}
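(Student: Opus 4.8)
The plan is to bound the degree of the covering $M\to M/\mathcal{G}$ associated with an arbitrary free iterated action and to read off the bound on $l(M)$ from this. Fix a free iterated action $\mathcal{G}=\{G_1,\dots,G_n\}\acts M$ and write $\Gamma=\pi_1(M)$, $E=\pi_1(M/\mathcal{G})$. Deleting the trivial steps yields an equivalent iterated action of length $n'\le n$ all of whose groups have order $\ge 2$, and composing the orbit maps exhibits $M\to M/\mathcal{G}$ as a finite covering (a tower of regular coverings) of degree $\prod_{i}|G_i|=[E:\Gamma]$. Hence $l(\mathcal{G}\acts M)\le n'\le\log_2\prod_i|G_i|=\log_2[E:\Gamma]$, and so it suffices to exhibit a constant $D=D(\Gamma)$ with $[E:\Gamma]\le D$ for \emph{every} such $E$; then $l(M)=\max\{l(\mathcal{G}\acts M)\}\le\log_2 D=:C$.

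To bound $[E:\Gamma]$ I would argue as follows. The manifold $M/\mathcal{G}$ is closed, aspherical, has universal cover the symmetric space $\tilde M=G/K$, and is finitely covered by the locally symmetric space $M$. Passing to the normal core $K_0=\bigcap_{e\in E}e\Gamma e^{-1}\trianglelefteq E$, the manifold $M/\mathcal{G}$ is a regular cover of the closed locally symmetric space $\tilde M/K_0$ with deck group $E/K_0$; since the lattice $K_0$ is centreless and $E$ is torsion-free, $E/K_0$ embeds in $\Out(K_0)\cong\Isom(\tilde M/K_0)$ (Mostow--Prasad rigidity) and the induced isometric action is free, so $M/\mathcal{G}$ itself carries a locally symmetric metric. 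Consequently $E$ is isomorphic to a lattice of $\Isom(\tilde M)$ containing (the image of) $\Gamma$, with $\vol(\tilde M/E)=\vol(M)/[E:\Gamma]$. The Kazhdan--Margulis theorem now supplies $v_0=v_0(G)>0$ bounding below the covolume of every lattice of $\Isom(\tilde M)$, whence $\vol(M)/[E:\Gamma]\ge v_0$, i.e. $[E:\Gamma]\le \vol(M)/v_0=:D$. As $\vol(M)=\vol(\tilde M/\Gamma)$ and $G$ (hence $v_0$) are determined by $\Gamma$, the resulting constant $C=\log_2 D$ depends only on $\Gamma$, which is the assertion.

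When $G$ has equal rank there is a shortcut bypassing rigidity altogether: Hirzebruch proportionality gives $\chi(M)\ne 0$, and \cref{free iterated action lenght Euler char} already yields $l(M)\le\log_2|\chi(M)|$. The substance — and the main obstacle — is therefore the non-equal-rank case (where $\chi(M)=0$, e.g. $M$ of odd dimension), in which one genuinely needs the combination of Mostow--Prasad rigidity (to recognise $\pi_1(M/\mathcal{G})$ as a lattice of the expected covolume) with Kazhdan--Margulis (to bound that covolume from below). Within the rigidity step the delicate point is the presence of $2$-dimensional de Rham factors, where Mostow rigidity fails and one must instead invoke the Nielsen realization theorem of Kerckhoff to realise the action of the finite group $E/K_0$ on the surface factors by isometries, together with the uniqueness of the de Rham decomposition so that these factors are preserved; verifying that this isometric action is free then uses torsion-freeness of $E$ exactly as above.
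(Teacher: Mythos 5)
Your proposal is correct and follows essentially the same route as the paper's proof: both reduce to bounding the index $[E:\Gamma]=\prod_i|G_i|\geq 2^{l}$ via multiplicativity of covolume together with the uniform Kazhdan--Margulis/Raghunathan lower bound on covolumes of lattices in $G$, giving $l\leq\log_2(\vol(G/\Gamma)/A)$. The only difference is that you carefully justify the step the paper merely asserts -- that $(K\setminus G/\Gamma)/\mathcal{G}$ is again a locally symmetric space $K\setminus G/\Gamma'$ -- via the normal core, Mostow--Prasad rigidity and the caveat about $2$-dimensional de Rham factors.
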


\begin{proof}
	Recall that if $\mu$ is the Haar measure of $G$ then $\vol(G/\Gamma)=\mu(F)$, where $F$ is a fundamental domain of $\Gamma$ in $G$. By \cite[11.9 Corollary]{raghunathan2012discrete}, there exists a constant $A$ such that $\vol(G/\Gamma)>A$ for all lattices of $G$. Moreover, if $\Gamma'$ is another lattice containing $\Gamma$ as a finite index subgroup then $\vol(G/\Gamma)=[\Gamma':\Gamma]\vol(G/\Gamma')$.
	
	Let $\mathcal{G}\acts K\setminus G/\Gamma$ be a free iterated action with $l(\mathcal{G})=l(\mathcal{G}\acts K\setminus G/\Gamma)=l$. Then $(K\setminus G/\Gamma)/\mathcal{G}\cong K\setminus G/\Gamma'$ where $\Gamma'$ is a lattice of $G$. Then $$\vol(G/\Gamma)=[\Gamma':\Gamma]\vol(G/\Gamma')=\prod_{i=1}^l|G_i|\vol(G/\Gamma')\geq 2^n A.$$
	
	In consequence $l\leq \log_2(\tfrac{\vol(G/\Gamma)}{A})$. The proof is finished by taking $C=\log_2(\tfrac{\vol(G/\Gamma)}{A})$.
\end{proof}

The dependence on the lattice cannot be removed. In order to give an example we need the following result:

\begin{prop}\cite[Proposition 2.3]{baker2001towers}\label{tower hyperbolic coverings}
	Let $N_0\longrightarrow N_1\longrightarrow \cdots \longrightarrow N_s$ be a tower of coverings of closed 3-manifolds. There exists a tower of coverings $M_0\longrightarrow M_1\longrightarrow \cdots \longrightarrow M_s$ of closed hyperbolic 3-manifolds and maps $f_i:M_i\longrightarrow N_i$  such that $\deg(f_i)=1$ and the covering $M_{i-1}\longrightarrow M_i$ is the pullback of $N_{i-1}\longrightarrow N_i$ by $f_i$ for all $i$.
\end{prop}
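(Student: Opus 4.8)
The plan is to handle the bottom of the tower first and then to build everything above it by iterated fibre products. The bottom is the only place where geometry enters: I would use the known fact that every closed orientable $3$-manifold is \emph{dominated} by a closed hyperbolic $3$-manifold, i.e.\ admits a degree-one map from one (for instance via Kawauchi's hyperbolic imitations, or other hyperbolization techniques). So fix a closed hyperbolic $3$-manifold $M_s$ and a map $f_s\colon M_s\to N_s$ with $\deg f_s=1$. Everything below $N_s$ will be produced by pulling back the given coverings along $f_s$ and its successors, and no further geometry will be needed: all the lower $M_i$ will be finite covers of $M_s$, hence automatically closed hyperbolic.

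Now I would run a downward induction. Assume $f_i\colon M_i\to N_i$ has been produced with $M_i$ closed hyperbolic and $\deg f_i=1$, and let $q_i\colon N_{i-1}\to N_i$ be the given covering; since $N_{i-1}$ and $N_i$ are closed and connected, $q_i$ is a finite covering. Set
\[
	M_{i-1} \;=\; M_i\times_{N_i}N_{i-1} \;=\; \{\, (x,y)\in M_i\times N_{i-1} \;:\; f_i(x)=q_i(y) \,\},
\]
with projections $\widetilde q_i\colon M_{i-1}\to M_i$ and $f_{i-1}\colon M_{i-1}\to N_{i-1}$, so that $q_i\circ f_{i-1}=f_i\circ\widetilde q_i$ and $\widetilde q_i$ is by definition the pullback of $q_i$ along $f_i$. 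As the pullback of a finite covering, $\widetilde q_i$ is a finite covering of $M_i$; it is connected because a degree-one map is $\pi_1$-surjective, so $(f_i)_*\pi_1(M_i)=\pi_1(N_i)$, whence the fibre product has a single component (a single relevant double coset). Therefore $M_{i-1}$ is a closed connected $3$-manifold and, being a finite cover of the closed hyperbolic manifold $M_i$, is itself closed hyperbolic. Finally $\deg f_{i-1}=1$: orient $M_i,N_i$ and then orient $N_{i-1},M_{i-1}$ so that $q_i$ and $\widetilde q_i$ preserve orientations; for a regular value $y_0$ of $f_{i-1}$ the fibre $f_{i-1}^{-1}(y_0)=\{(x,y_0):f_i(x)=q_i(y_0)\}$ is identified, via the local orientation-preserving charts coming from $q_i$ and $\widetilde q_i$, with $f_i^{-1}(q_i(y_0))$ in an orientation-matching way, so the signed count is $\deg f_i=1$. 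Iterating down to $i=0$ yields the tower $M_0\to\cdots\to M_s$ together with the maps $f_i$ having the asserted properties.

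The main obstacle is the base case: the existence of a degree-one map from a closed hyperbolic $3$-manifold onto an arbitrary closed orientable $3$-manifold. This is a genuine theorem of $3$-manifold topology, where hyperbolic geometry (and, behind it, geometrization) is really used; in a write-up I would import it as a black box. By contrast the propagation down the tower is formal, the only two points needing care being (i) the connectedness of the successive fibre products, which is exactly the $\pi_1$-surjectivity of degree-one maps, and (ii) the verification that the pullback of a degree-one map along a finite covering is again of degree one, which is the orientation bookkeeping indicated above.
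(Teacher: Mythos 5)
Your argument is correct and is essentially the proof of Baker--Boileau--Wang, whose Proposition~2.3 the paper simply cites without reproving: dominate the bottom manifold $N_s$ by a closed hyperbolic $3$-manifold (the genuine geometric input, obtainable e.g.\ by $1/q$-surgery on a null-homotopic hyperbolic knot or by Kawauchi's imitations) and propagate up the tower by fibre products, using $\pi_1$-surjectivity of degree-one maps for connectedness and the fact that a finite cover of a closed hyperbolic manifold is again closed hyperbolic. The only point worth recording explicitly is that the $N_i$ must be orientable for the degree to make sense, which is implicit in the statement and holds in the paper's application.
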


\begin{cor}\label{iterated action lenght degree 1 maps}
	Assume that we are in the setting of \cref{tower hyperbolic coverings}, $\pi_1(M_i)\trianglelefteq\pi_1(M_j)$ if and only if $\pi_1(N_i)\trianglelefteq\pi_1(N_j)$ for all $i<j$.
\end{cor}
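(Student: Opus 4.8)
We are in the situation of Proposition~\ref{tower hyperbolic coverings}: a tower $N_0\to N_1\to\cdots\to N_s$ of closed $3$-manifolds, a tower $M_0\to M_1\to\cdots\to M_s$ of closed hyperbolic $3$-manifolds, and degree-one maps $f_i\colon M_i\to N_i$ such that each $M_{i-1}\to M_i$ is the pullback of $N_{i-1}\to N_i$ along $f_i$. We want to show that for $i<j$, the inclusion $\pi_1(M_i)\hookrightarrow\pi_1(M_j)$ is normal if and only if $\pi_1(N_i)\hookrightarrow\pi_1(N_j)$ is normal. The plan is to reduce everything to the single step $N_{i-1}\to N_i$ versus $M_{i-1}\to M_i$, extract the precise description of $\pi_1(M_{i-1})$ as a subgroup of $\pi_1(M_i)$ coming from the pullback construction, and then observe that normality is a condition that transfers along the $\pi_1$-surjection induced by $f_i$.

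\textbf{Step 1: reduce to consecutive levels.} A subgroup $H\le G$ sitting in a chain $H\le H'\le G$ is normal in $G$ iff it is normal at each stage only under extra hypotheses; but what we actually need is the single-step statement $\pi_1(M_{i})\trianglelefteq \pi_1(M_{i-1})$ \emph{as subgroups of $\pi_1(M_{i-1})$} iff $\pi_1(N_i)\trianglelefteq\pi_1(N_{i-1})$, together with the general-$j$ version obtained by composing covers. So first I would prove the claim for $j=i+1$. Then for general $i<j$, one iterates: $\pi_1(M_i)\trianglelefteq\pi_1(M_j)$ holds iff $\pi_1(M_i)$ is normalized by all of $\pi_1(M_j)$, and since $\pi_1(M_j)$ is generated by (conjugates coming from) the intermediate groups, one can set up a comparison with $\pi_1(N_j)$ normalizing $\pi_1(N_i)$ using the compatible system of surjections $\pi_1(M_k)\twoheadrightarrow\pi_1(N_k)$ induced by the $f_k$ (these are surjective because $f_k$ has degree one). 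The cleanest route is: for each $i<j$, the covering $M_i\to M_j$ is the pullback of $N_i\to N_j$ along $f_j$ (this follows by composing the one-step pullback squares, since pullbacks compose), so it suffices to handle one pullback square at a time.

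\textbf{Step 2: identify the subgroup from the pullback.} In a pullback square of covering spaces, if $N'\to N$ is the covering corresponding to a subgroup $P\le\pi_1(N)$, and $f\colon M\to N$ induces $f_*\colon\pi_1(M)\to\pi_1(N)$, then the pullback covering $M'\to M$ corresponds exactly to the subgroup $f_*^{-1}(P)\le\pi_1(M)$. Here $f_*$ is surjective (degree one), so $f_*^{-1}(P)$ is the full preimage and $\pi_1(M)/f_*^{-1}(P)\cong\pi_1(N)/P$ as $\pi_1(M)$-sets, even as groups when $P$ is normal. The key elementary fact is then: for a surjection $q\colon A\twoheadrightarrow B$ and a subgroup $P\le B$, the preimage $q^{-1}(P)$ is normal in $A$ iff $P$ is normal in $B$. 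Applying this with $A=\pi_1(M_j)$, $B=\pi_1(N_j)$, $q=(f_j)_*$, $P=\pi_1(N_i)\le\pi_1(N_j)$, and $q^{-1}(P)=\pi_1(M_i)$ gives exactly the desired equivalence.

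\textbf{Main obstacle.} The only genuine point requiring care is verifying that the pullback covering $M_i\to M_j$ really corresponds to $(f_j)_*^{-1}(\pi_1(N_i))$ and that $(f_j)_*$ is surjective onto $\pi_1(N_j)$; the latter uses that a degree-one map between closed oriented manifolds is surjective on $\pi_1$ (standard, via the fact that the image has finite index bounded by the degree, here $1$), and the former is the standard covering-space/pullback dictionary. Granting these, the rest is the purely group-theoretic observation about preimages of normal subgroups under surjections, which is immediate. So I expect the write-up to be short: state the pullback-subgroup identification, invoke surjectivity of $(f_j)_*$, and conclude by the preimage-of-normal-is-normal lemma.
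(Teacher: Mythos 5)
Your proof is correct and is exactly the argument the paper intends (the corollary is stated without proof): paste the pullback squares to see that $\pi_1(M_i)=(f_j)_*^{-1}(\pi_1(N_i))$ inside $\pi_1(M_j)$, use that a degree-one map is $\pi_1$-surjective, and conclude via the fact that for a surjection the preimage of a subgroup is normal if and only if the subgroup is. The only cosmetic issue is the muddled opening of your Step 1, which you correctly abandon in favour of composing the pullback squares directly.
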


Now, we take $N_i=M$ the solvmanifold from \cref{solvmanifold inifite lenght} for all $i$ and the covering $N_{i-1}\longrightarrow N_i$ the self-covering of \cref{solvmanifold inifite lenght}. By \cref{tower hyperbolic coverings}, for each action $\mathcal{G}_k=\{\Z/2\oplus\Z/2,\cdots \Z/2\oplus \Z/2\}\acts M$, there exists a closed hyperbolic $3$-manifold $M_{0,k}$ (which depends on $k$) such that we have a free iterated action $\mathcal{G}_k\acts M_{0,k}$. By \cref{iterated action lenght degree 1 maps}, we have $l(\mathcal{G}_k\acts M_{0,k})=k$. Since $M_{0,k}$ are hyperbolic manifolds for all $k$, the bound of \cref{lenght locally symmetric spaces} needs to depend on the lattice and not only the Lie group (in this example $\SO^+(3,1)$).

\begin{ques}
	Let $ K\setminus G/\Gamma$ be a locally symmetric space where $G$ is connected semisimple without compact factors and $\rank_\R G\geq 2$, $K$ is a maximal compact subgroup and $\Gamma$ is an irreducible lattice. Does there exist a constant $C$ only depending on $G$ such that $l(K\setminus G/\Gamma)\leq C$?
\end{ques}

The solvmanifold of \cref{solvmanifold inifite lenght} can be used to construct other closed aspherical locally homogeneous space $ K\setminus G/\Gamma$ such that $l( K\setminus G/\Gamma)=\infty$. 

\begin{prop}\label{lenght infty locally homogeneous space}
	There exists a closed aspherical locally homogeneous space $ K\setminus G/\Gamma$ such that the solvable radical of $G$ is abelian and $l( K\setminus G/\Gamma)=\infty$.
\end{prop}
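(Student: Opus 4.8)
The plan is to combine the infinite-length solvmanifold $M = R/\Gamma$ from \cref{solvmanifold inifite lenght} with a suitable locally symmetric factor and then show that the iterated actions on $M$ can be transported to the product. The most natural candidate for $G$ is a group of the form $G = R \times G_{ss}$ (or more precisely a group whose solvable radical is $R \cong \operatorname{Sol}^3$, which is abelian-by-abelian but not abelian — so one needs to be slightly careful: here one should take $G$ so that its solvable radical is genuinely \emph{abelian}, which forces $R$ to appear not as the radical itself but realized differently). In fact the cleanest route is: let $G = \R^2 \rtimes_\psi (\R \times G_0)$ where $G_0$ is a connected semisimple Lie group without compact factors acting trivially on $\R^2$, and where the $\R$-factor acts via the $\psi$ of \cref{solvmanifold inifite lenght}. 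Then the solvable radical of $G$ is exactly $\R^2$, which is abelian. Taking $K$ a maximal compact subgroup of $G$ (which lies inside $\{1\} \times (\R \times G_0)$ up to the $\R$-part, so effectively $K = K_0$ a maximal compact of $G_0$) and $\Gamma' = \Gamma \times \Gamma_0$ for $\Gamma \leq R$ the solvmanifold lattice and $\Gamma_0 \leq G_0$ a torsion-free cocompact lattice, we get $K \setminus G/\Gamma' \cong (R/\Gamma) \times (K_0\setminus G_0/\Gamma_0) = M \times X$ where $X$ is a closed locally symmetric space.

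The second step is to show $l(M \times X) = \infty$. The key point is that the tower of regular self-coverings $R/\Gamma_k \cong M \to M$ constructed in the proof of \cref{solvmanifold inifite lenght} pulls back to a tower of regular self-coverings of $M \times X$: namely $(R/\Gamma_k) \times X \cong M \times X \to M \times X$, with deck group $\Z/2 \oplus \Z/2$ at each stage, so we obtain a free iterated action $\mathcal{G}_k \acts M\times X$ with $l(\mathcal{G}_k) = k$. It remains to check $l(\mathcal{G}_k \acts M\times X) = k$, i.e. that this tower cannot be shortened. For this, suppose $\mathcal{G}' = \{G_1',\dots,G_m'\} \acts M \times X$ is equivalent to $\mathcal{G}_k \acts M \times X$; then there is a subnormal series $\pi_1((R/\Gamma_k)\times X) = \Lambda_0 \trianglelefteq \Lambda_1 \trianglelefteq \cdots \trianglelefteq \Lambda_m = \pi_1(M\times X)$ with $\Lambda_i/\Lambda_{i-1} \cong G_i'$. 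Since $\pi_1(M\times X) = \Gamma \times \Gamma_0$ and $[\Gamma\times\Gamma_0 : \Gamma_k \times \Gamma_0]$ is finite, each $\Lambda_i$ sits between $\Gamma_k \times \Gamma_0$ and $\Gamma \times \Gamma_0$. I would use that $\Gamma_0$ is \emph{characteristic} among such intermediate subgroups — or more directly, project to $\Gamma$: the image of each $\Lambda_i$ under the projection $\Gamma \times \Gamma_0 \to \Gamma$ is an intermediate subgroup $\Gamma_k \leq \bar\Lambda_i \leq \Gamma$, and one shows (using that $\Gamma_0$ has no nontrivial abelian normal subgroups, being a lattice in a semisimple group, so it contributes no "room") that the normalizer computations of \cref{solvmanifold inifite lenght} force $\bar\Lambda_i = \Gamma_a^{(i,j)}$ in the notation there, hence $|G_i'| \leq 4$. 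The volume/index count $4^k = \prod |G_i'| \leq 4^m$ then gives $m \geq k$, so $l(\mathcal{G}_k \acts M \times X) = k$, and letting $k \to \infty$ yields $l(M\times X) = \infty$.

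\textbf{The main obstacle} will be the bookkeeping in this last step: controlling subgroups $\Lambda$ with $\Gamma_k\times\Gamma_0 \leq \Lambda \leq \Gamma\times\Gamma_0$ and showing they must be products $\Gamma_a^{(i,j)} \times \Gamma_0$, rather than "mixed" subgroups whose projections behave unexpectedly. The clean way to handle this is to observe that $\Gamma_0 = [\Gamma\times\Gamma_0, \Gamma\times\Gamma_0] \cap (\text{something})$... more robustly: $\Gamma_0$ is the unique maximal subgroup of $\Gamma\times\Gamma_0$ that is a lattice in a semisimple group (it is generated by the semisimple part, or can be characterized as the intersection of all finite-index subgroups' semisimple parts via Margulis/Mostow rigidity of the ambient structure), hence every $\Lambda$ with finite index containing $\Gamma_k\times\Gamma_0$ decomposes compatibly with the product, reducing everything to the solvmanifold computation already done. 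One should also double-check that $G$ as defined is connected with the claimed abelian solvable radical, and that $K\setminus G/\Gamma'$ is genuinely a closed aspherical locally homogeneous space — both are routine once the semidirect product structure is written out. The rest is a direct transcription of the argument in \cref{solvmanifold inifite lenght}.
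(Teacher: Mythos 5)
There is a genuine gap, and it is exactly at the point you flagged and then talked yourself out of: the group $G=\R^2\rtimes_\psi(\R\times G_0)$ does \emph{not} have abelian solvable radical. Since $G_0$ commutes with the $\R$-factor and acts trivially on $\R^2$, the subgroup $\R^2\rtimes_\psi\R\cong\operatorname{Sol}^3$ is a connected normal solvable subgroup of $G$, and as $G/(\R^2\rtimes_\psi\R)\cong G_0$ is semisimple, the solvable radical of $G$ is precisely $\operatorname{Sol}^3$, which is non-abelian. Your assertion that the radical is "exactly $\R^2$" is therefore false, and the whole construction fails the hypothesis of the proposition — which is the entire content beyond \cref{solvmanifold inifite lenght}: the point is to show that infinite length is not merely a $\operatorname{Sol}$-phenomenon sitting inside $G$ as a normal solvable piece. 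Taking a direct product of the $\operatorname{Sol}^3$-solvmanifold with a locally symmetric space cannot achieve this.

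The paper's fix is to make the semisimple lattice itself carry the Anosov twisting, eliminating the extra $\R$-factor: take $\Lambda=\pi_1$ of a closed hyperbolic $n$-manifold ($n\geq 3$) admitting an epimorphism $f\colon\Lambda\to\Z$, set $\Gamma_k=2^k\Z^2\rtimes_{\phi\circ f}\Lambda$ with $\phi$ as in \cref{solvmanifold inifite lenght}, and realize these as fundamental groups of closed aspherical locally homogeneous spaces via the Seifert construction; since $f$ does not factor through an $\R$-factor of the ambient group, the radical there is just $\R^2$. The self-covering tower and the length count then proceed as in \cref{solvmanifold inifite lenght}. For what it is worth, the part of your argument you identified as the "main obstacle" is actually unproblematic and easier than you make it: any subgroup $\Lambda_i$ with $\Gamma_k\times\Gamma_0\leq\Lambda_i\leq\Gamma\times\Gamma_0$ contains $\{1\}\times\Gamma_0$, hence splits automatically as $p_1(\Lambda_i)\times\Gamma_0$ (if $(a,b)\in\Lambda_i$ then $(a,b)(1,b^{-1})=(a,1)\in\Lambda_i$), so no rigidity input is needed there. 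The error lies in the choice of $G$, not in the length computation.
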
 

\begin{proof}
	Let $\Lambda$ be the fundamental group of a closed hyperbolic manifold of dimension $n\geq 3$ such that there exists an epimorphism $f:\Lambda\longrightarrow \Z$ and let $\phi:\Z\longrightarrow \Gl(2,\Z)$ be as in the proof of \cref{solvmanifold inifite lenght}. Then we can define $\Gamma_k=2^k(\Z^2)\rtimes_{\phi\circ f} \Lambda$. Using the Seifert construction (see \cite[Theorem 11.7.29]{lee2010seifert}) we can construct a closed aspherical locally homogeneous space with fundamental group $\Gamma_k$ for all $k$, which we denote by $M_k$. 
	
	The same arguments as in \cref{solvmanifold inifite lenght} show that all $\Gamma_k$ are isomorphic and since the Borel conjecture is true for lattices in connected Lie groups (see \cite{bartels2012borel,kammeyer2016farrell}) we can conclude that $M_k\cong M$ for all $k$. The inclusion $\Gamma_k\longrightarrow \Gamma_{k-1}$ induces a regular self-covering $M_{k}\longrightarrow M_{k-1}$. Finally, the tower of self-coverings $M_{k}\longrightarrow M_{k-1}\longrightarrow\cdots\longrightarrow M_0$ induces a free iterated group action $\mathcal{G}_k=\{\Z/2\oplus\Z/2,\cdots, \Z/2\oplus \Z/2\}\acts M$ such that $l(\mathcal{G}_k\acts M)=k$. Thus $l(M)=\infty$.
\end{proof}

\begin{ques}
	Let $K\setminus G/\Gamma$ be a closed aspherical locally homogeneous space where the solvable radical $R$ of $G$ is nilpotent and $G/R$ is semisimple without compact factors and $\rank_\R G/R\geq 2$. Does there exist a constant $C$ such that $l(K\setminus G/\Gamma)\leq C$?
\end{ques}


\end{document}